\newcounter{hours}\newcounter{minutes}
\newtheorem{thm}{Theorem}[section]
\newtheorem{question}[thm]{Question}
\newtheorem{lem}[thm]{Lemma}
\newtheorem{cor}[thm]{Corollary}
\newtheorem{prop}[thm]{Proposition}
\newtheorem{DEF}[thm]{Definition}
\theoremstyle{remark}                  
\newtheorem{rem}[thm]{Remark}
\def\D{{\mathcal D}}
\def\L{{\mathcal L}}
\def\M{{\mathcal M}}
\def\S{{\mathbb S}}
\def\T{{\mathcal T}}
\def\real{{\mathbb R}}
\def\Indicator{{\mathbbm{1}}}
\def\ep{\varepsilon}
\def\al{\alpha}
\def\del{\delta}
\def\om{\omega}
\def\Om{\Omega}
\def\gam{\gamma}
\def\Gam{\Gamma} 
\def\lam{\lambda}
\def\Lam{\Lambda}
\def\lap{\Delta}
\def\sig{\sigma}
\def\grad{\nabla}
\def\diam{\textnormal{diam}}
\def\det{\textnormal{det}}
\def\Tr{\textnormal{Tr}}
\def\Id{\textnormal{Id}}
\def\intersect{\bigcap}
\newcommand{\abs}[1]{\left| #1 \right|}
\newcommand{\ds}{\displaystyle}
\newcommand{\Laplace}[1]{(-\Delta)^{#1} }
\newcommand{\norm}[1]{\lVert#1\rVert}
\newcommand{\inner}[2]{#1 \cdot #2}
\newcommand{\BlueComment}[1]{\color{blue}{#1}\color{black}}
\begin{document}

\title{Aleksandrov-Bakelman-Pucci Type Estimates For Integro-Differential Equations}
\author{Nestor Guillen}
\address{Department of Mathematics, University of Texas, 1 University Station C1200, Austin, TX 78712-0257} 
\email{nguillen@math.utexas.edu}

\author{Russell W. Schwab} 
\address{ Department of Mathematical Sciences, Carnegie
    Mellon University, Pittsburgh, PA 15213}
\email{rschwab@andrew.cmu.edu}
\begin{abstract}
In this work we provide an Aleksandrov-Bakelman-Pucci type estimate for a certain class of fully nonlinear elliptic integro-differential equations, the proof of which relies on an appropriate generalization of the convex envelope to a nonlocal, fractional-order setting and on the use of Riesz potentials to interpret second derivatives as fractional order operators. This result applies to a family of equations involving some nondegenerate kernels and as a consequence provides some new regularity results for previously untreated equations.  Furthermore,  this result also gives a new comparison theorem for viscosity solutions of such equations which only depends on the $L^\infty$ and $L^n$ norms of the right hand side, in contrast to previous comparison results which utilize the continuity of the right hand side for their conclusions.  These results appear to be new even for the linear case of the relevant equations.
\end{abstract}

\date{\today\ Arxiv Version} 
\thanks{Nestor Guillen was partially supported by NSF grant DMS-0654267 and Russell Schwab was supported by a NSF Postdoctoral Research Fellowship, grant DMS-0903064.  This work was carried out during multiple visits of Nestor Guillen to The Center for Nonlinear Analysis at Carnegie Mellon University for whose support the authors are also grateful.  The authors would like to thank Takis Souganidis and Luis Silvestre for comments on the preliminary version of this manuscript, and the authors would like to extend a special thanks to the anonymous referee for helpful observations which led to some significant improvements in this work.}
\keywords{Convex Envelope, Integro-Differential Equations, Jump Processes, Levy Processes, Nonlocal Elliptic Equations, Obstacle Problems, Interpolation}
\subjclass[2000]{35J99, 
45J05, 
47G20, 
49L25, 
49N70, 
60J75, 
93E20 
}

\maketitle
\baselineskip=14pt
\pagestyle{headings}		
\markboth{N. Guillen and R. Schwab}{ABP Estimates For Integro-Differential Equations}

\section{Introduction}\label{sec:Intro}
\setcounter{equation}{0}
	
We begin this work with a very basic question to motivate our results.  Suppose that $L$ is a uniformly elliptic operator ($L$ could be a second order or integro-differential operator) and that $u_k$ are appropriate weak solutions (read viscosity solutions) of the equations
\begin{equation*}
\begin{cases}
\ds L(u_k,x)= f_k(x) &\text{ in } B_1\\
u= 0 &\text{ on } \real^n\setminus B_1,
\end{cases}
\end{equation*}
with the additional assumption that $0\leq f_k\leq1$.
Then we pose the following question:

\begin{question}\label{Ques:ABasicQuestion}
Under what conditions will it be true that $\abs{\{x:f_k(x)>0\}}\to0$ as $k\to\infty$ also implies that $\norm{u_k}_{L^\infty}\to0$?
\end{question}

\noindent
(We will assume $L$ is 1-homogeneous, and hence the constant $0$ function is a supersolution, and so always $u_k\leq0$.)

In the case that $L$ is a second order, uniformly elliptic operator,
\begin{equation*}
L(u,x)=a_{ij}(x)u_{x_ix_j}(x),
\end{equation*}
(for $\lam\Id\leq (a_{ij})\leq\Lam\Id$) the answer to the above question is indeed affirmative and is given by the celebrated Aleksandrov-Bakelman-Pucci (ABP) estimate which states that
\begin{equation*}
-\inf_{B_1}\{u_k\} \leq \frac{C(n)}{\lam}\norm{f_k}_{L^n}
\end{equation*}
(where the equation is set in $B_1\subset\real^n$).

The current understanding of this question is dramatically different, and not many results are known when $L$ is a uniformly elliptic integro-differential operator, namely
\begin{equation}\label{eq:IntroLDef}
L(u,x)=\int_{\real^n}(u(x+y)-u(x)-\inner{Du(x)}{y}\Indicator_{\abs{y}\leq1}(y))K(x,y)dy,
\end{equation}
\begin{equation*}
\mbox{where } \lam \leq K(x,y)|y|^{n+\sig}\leq\Lam	\;\forall \;x,y \in \mathbb{R}^n.
\end{equation*}

\noindent
In fact, using current results the only occasions in which something could be said about the possibility of $\norm{u_k}_{L^\infty}\to0$ would be when an explicit Green's function for the operator $L$ in $B_1$ is known to exist.  For many applications and also the possibility of treating nonlinear equations, this is an unsatisfactory answer to Question \ref{Ques:ABasicQuestion}.

In this work, we present a new Aleksandrov-Bakelman-Pucci (ABP for short) type estimate for subsolutions and supersolutions of integro-differential equations for particular kernels $K(x,y)$. Namely, the kernels for which the result applies include (see Remark \ref{rem:Kernels})

\begin{equation*}
K(x,y)= (2-\sig)\frac{y^TA(x)y}{\abs{y}^{n+\sig+2}}\ \ \text{for} \;\;\ \sig\in(0,2),
\end{equation*}

\noindent
where $A(x)$ satisfies a nondegenerate\  ellipticity condition

\begin{equation}\label{eq:DegenerateElliptic}
\Tr(A(x))\geq\lam\ \forall\ x
\end{equation}

\noindent
(and is only measurable in $x$).  
Since these kernels are symmetric in $y$, we may rewrite $L(u,x)$ as
\begin{equation}\label{eq:LinearGUISCH}
L(u,x)=(2-\sig)\int_{\real^n}\del u(x,y) \frac{y^TA(x)y}{\abs{y}^{n+\sig+2}}dy, \;\;\sig\in(0,2)	
\end{equation}

\noindent
where we use the notation for second differences as
\begin{equation*}
\del u(x,y):=u(x+y)+u(x-y)-2u(x).	
\end{equation*}

We want to point out here the much richer structure inherent in elliptic integro-differential equations regarding uniformly and non-uniformly elliptic operators.  A distinction must be made between the properties of the kernels which are used to construct operators such as (\ref{eq:IntroLDef}) and the properties of the operators, $L$, themselves.  Indeed, it is very reasonable to consider $L$ to be uniformly elliptic even if the corresponding $K$ are not pointwise comparable uniformly to the standard kernel, $\abs{y}^{-n-\sig}$ (see \cite{KaMi-2011NondegJumpPreprint} for recent regularity results in this direction).  Such distinctions are by no means well resolved in the current literature, and this should be considered as evidence to the vast amount of work which is still to be done in this direction of integro-differential equations.

Moreover, our result also covers fully nonlinear operators, such as
\begin{equation}\label{eq:Fullynonlinear}
F(u,x)=\sup_\beta\inf_\alpha\left\{L_{\al\beta}(u,x)\right\}
\end{equation}
where each $L_{\alpha\beta}$ is as in (\ref{eq:LinearGUISCH}), with a fixed $\sig$ for all of the $L_{\al\beta}$. It gives new results for the Dirichlet problem:
\begin{equation}\label{eq:PIDEgeneral}
\begin{cases}
\ds F(u,x)= f(x) &\text{ in } D\\
u=g &\text{ on } \real^n\setminus D,
\end{cases}
\end{equation}
where $F$ is given by (\ref{eq:Fullynonlinear}), $g$ is continuous, and $D$ is an open, bounded domain.

The ABP estimate is not only linked to convergence questions as explained above, but it also plays an important role in the regularity theory for solutions of equations related to (\ref{eq:LinearGUISCH})-(\ref{eq:PIDEgeneral}).  In particular, Theorem \ref{thm:Main}, below, implies both new comparison and regularity results for solutions of (\ref{eq:PIDEgeneral}) and gives a new proof of the recent H\"older regularity results of \cite[Section 12]{CaSi-09RegularityIntegroDiff} for the operators which are in the class exemplified by (\ref{eq:LinearGUISCH}).  Furthermore, regularity for equations with operators whose kernels obey the nondegenerate ellitpticity requirement in (\ref{eq:DegenerateElliptic}) have not been previously attained, and so in Section \ref{sec:NonlocalLepsilon} we briefly explain how to obtain H\"older regularity results as a consequence of Theorem \ref{thm:Main}. 

The paradigm of ellipticity taken in \cite[Section 3]{CaSi-09RegularityIntegroDiff} (and \cite[Chapter 2 and 5]{CaCa-95} for second order theory) is that $F$ in (\ref{eq:PIDEgeneral}) is considered elliptic if there are minimal and maximal operators, $M^-$ and $M^+$, such that
\begin{equation*}
M^-(u-v)\leq F(u,x)-F(v,x)\leq M^+(u-v).
\end{equation*}
In this work, we use a more restricted version of these minimal/maximal operators than those in \cite[Section 3]{CaSi-09RegularityIntegroDiff}, which is reflected in the list of kernels covered as (\ref{eq:LinearGUISCH}) being much smaller than, but not exactly contained within  those considered in \cite{CaSi-09RegularityIntegroDiff}.  Our extremal operators are defined as 
\begin{equation}\label{eq:MminusDef}
M^-(u,x) = \inf_{\lam\leq \Tr(A)\ \text{and}\ A\leq\Lam\Id}\left\{(2-\sigma)\int_{\real^n}\del u(x,y) \frac{y^TAy}{\abs{y}^{n+\sig+2}}dy \right\}
\end{equation}
and
\begin{equation}\label{eq:MPlusDef}
M^+(u,x) = \sup_{\lam \leq \Tr(A)\ \text{and}\ A\leq\Lam\Id}\left\{(2-\sigma)\int_{\real^n}\del u(x,y) \frac{y^TAy}{\abs{y}^{n+\sig+2}}dy \right\}.
\end{equation}
Supersolutions (respectively subsolutions) to (\ref{eq:PIDEgeneral}) for these $F$ in the ellipticity family are characterized by the fact that they are automatically supersolutions of a minimal (respectively maximal) equation. 
Accordingly, we assume throughout this note $u$ to be a viscosity supersolution of a minimal equation with variable right hand side:
\begin{equation}\label{eq:PIDEmain}
\begin{cases}
\ds M^-(u,x)\leq f(x) &\text{ in } B_1\\
u\geq 0 &\text{ on } \real^n\setminus B_1,
\end{cases}
\end{equation}
and without loss of generality we assume that $f\geq0$ and that $u\leq0$ in $B_1$.

\begin{rem}
It is very important to remark that although the restriction of $A\leq \Lam\Id$ is necessary for the definitions of $M^-$ and $M^+$, the ABP-type result presented in Theorem \ref{thm:Main} only depends on the value of $\lam$, and not that of $\Lam$.  This is to be expected for such a result, as seen Section \ref{sec:Proof} with the use of Lemma \ref{lem:NonlocalDetMminusOrder}.
\end{rem}

The main contribution of this work is to provide estimates on the infimum of $u$ in terms of measure theoretic quantities of $f$, in particular for (\ref{eq:PIDEmain}) the $L^\infty$ and $L^n$ norms of $f$. The main result of this work is:

\begin{thm}\label{thm:Main}
Assume that $u\in L^\infty(\real^n)\intersect LSC(\real^n)$ solves (\ref{eq:PIDEmain}) in the viscosity sense, that $M^-$ is given in (\ref{eq:MminusDef}), and that $f\in C(\Bar{B_1})$.  Then there exists a constant, $C(n)$, such that 
\begin{equation*}
-\inf_{B_1}\{u\} \leq \frac{C(n)}{\lam}(\norm{f}_{L^\infty(K_u)})^{(2-\sig)/2}(\norm{f}_{L^n(K_u)})^{\sig/2},
\end{equation*}
where $K_u\subset B_1$ is the coincidence set between $u$ and a special envelope of $u$, made precise in Section \ref{sec:Envelope}, (\ref{eq:ContactSet}).
\end{thm}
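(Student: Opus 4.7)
The plan is to port the classical Aleksandrov-Bakelman-Pucci argument to the integro-differential setting, substituting a fractional-order envelope for the classical convex envelope and replacing the Hessian-based area formula by one based on Riesz potentials. Recall that in the second-order case one takes the convex envelope $\Gamma$ of the negative part of $u$ (extended by zero outside $B_1$), observes that $D^2\Gamma\ge 0$ on the contact set $K_u=\{u=\Gamma\}$, bounds $\det(D^2\Gamma)$ pointwise by $(f/(n\lam))^n$ via AM--GM applied to the equation, and concludes via the area formula for $\nabla\Gamma$ with $(-\inf u)^n\le C\int_{K_u}\det(D^2\Gamma)\,dx$.

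I would carry this out in three steps. First, let $\Gamma$ denote the special envelope introduced in Section \ref{sec:Envelope}. At every $x_0\in K_u$, $\Gamma$ touches $u$ from below, so being a viscosity supersolution of the minimal equation should give $M^-(\Gamma,x_0)\le f(x_0)$. Second, the nonlocal convexity built into $\Gamma$ forces every second difference $\delta\Gamma(x_0,y)$ to be nonnegative on $K_u$; this allows one to define a ``nonlocal determinant'' $\Theta(x_0)$, built from Riesz-type averages of $\delta\Gamma(x_0,\cdot)$, and via Lemma \ref{lem:NonlocalDetMminusOrder} to bound it pointwise by a product involving $f(x_0)$ and $\norm{f}_{L^\infty}$. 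This is the nonlocal substitute for the AM--GM step. Third, invoke a Riesz-potential analog of the area formula to obtain $(-\inf_{B_1}u)^n\le C\int_{K_u}\Theta(x)\,dx$, then combine the pointwise bound with this integral inequality to read off the stated estimate.

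The main obstacle is producing the interpolated $\norm{f}_{L^\infty}^{(2-\sig)/2}\norm{f}_{L^n}^{\sig/2}$ structure rather than a pure $L^n$ bound. The likely mechanism is that the Riesz-potential representation of second differences splits $\Theta(x_0)$ into a near-diagonal piece, whose scaling matches that of a fractional Laplacian of order $\sig$ and contributes the $L^n$ factor with exponent $\sig/2$, and a tail piece controlled by the global supremum of $f$, contributing the $L^\infty$ factor with exponent $(2-\sig)/2$. Making this decomposition rigorous, tracking the dependence on $\sig$ of the dimensional constants, and confirming that the formal limit $\sig\to 2^-$ reproduces classical ABP, is where the bulk of the technical work should lie. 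A secondary subtlety is verifying that the special envelope is regular enough and nonlocally convex enough to both test the viscosity inequality at contact points and support the Riesz-potential area formula; this is presumably the purpose of the careful construction in Section \ref{sec:Envelope}.
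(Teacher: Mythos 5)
Your skeleton (fractional envelope, contact-set comparison $M^-(\Gamma,x_0)\le f(x_0)$, a nonlocal determinant controlled by $M^-$ via Lemma \ref{lem:NonlocalDetMminusOrder}, and an Aleksandrov-type integral inequality) matches the paper's architecture, but the step where you produce the interpolated norm is wrong, and it hides the real difficulty. You assert a direct area-formula inequality $(-\inf_{B_1}u)^n\le C\int_{K_u}\Theta\,dx$ for $u$ (equivalently $\Gamma$) itself, and then try to extract the factor $\norm{f}_{L^\infty}^{(2-\sig)/2}$ from a near-diagonal/tail split of the kernel defining $\Theta$. Neither piece works as stated. There is no area formula for $\Gamma$: $\Gamma$ is not convex, and the geometric (divergence) structure is only available after passing to the Riesz potential $P=\Gamma*K_{2-\sig}$, whose Hessian is the $\sig$-order operator of Lemma \ref{lem:SingularIntegralPij}. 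The Aleksandrov estimate is then applied to the convex envelope of $P$ in a large ball $B_{R_\sig}$, yielding $-\inf P\le \tfrac{C(n)}{\lam}\norm{f}_{L^n(K_u)}$ --- a \emph{pure} $L^n$ bound with no $L^\infty$ factor at this stage. Moreover, if your claimed inequality for $u$ itself were available with $\Theta\le C(f/\lam)^n$ pointwise, you would obtain $-\inf u\le \tfrac{C}{\lam}\norm{f}_{L^n}$, which is precisely the estimate (\ref{eq:ABPwithoutInfinityNorm}) that the paper identifies as open and does not prove.

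The missing idea is the passage from $\inf P$ back to $\inf\Gamma=\inf u$, and this is exactly where the exponents $(2-\sig)/2$ and $\sig/2$ are generated. One needs Lemma \ref{lem:GoodRing} and Proposition \ref{prop:PointToMeasureSIGMA}: a dyadic-ring argument at the minimum point $x_0$ shows that, except on a bounded number of bad rings, $\Gamma(x_0+y)-\Gamma(x_0)\le f(x_0)\abs{y}^{\sig}$, and integrating against the Riesz kernel over the good rings up to the radius $\rho_0=(-\Gamma(x_0)/2f(x_0))^{1/\sig}$ gives $-\inf P\ge C(-\Gamma(x_0))^{2/\sig}(2f(x_0))^{-(2-\sig)/\sig}$; rearranging against the $L^n$ bound on $-\inf P$ produces the interpolated right-hand side. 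Two smaller corrections: the envelope condition $E_\sig(\Gamma)\ge 0$ gives $h_\sig(\Gamma,x)\ge 0$ (positivity of certain weighted integrals of $\del\Gamma(x,\cdot)$), not pointwise nonnegativity of $\del\Gamma(x_0,y)$ on all of $K_u$ --- that holds only at the global minimum; and the argument must first be run for $u$ that is $C^{1,1}$ from above (so the operators are classically defined a.e.\ on $\Gamma$), with the general case recovered by inf-convolution and the contact-set stability of Lemma \ref{lem:ContactSetsLimsup}, which requires the continuity of $f$ assumed in the theorem.
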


\noindent
A few immediate remarks related to Theorem \ref{thm:Main} are in order:

\begin{rem}
For the definitions and basic properties of viscosity solutions of (\ref{eq:PIDEgeneral}) and (\ref{eq:PIDEmain}), the reader should consult the works: \cite{BaChIm-08Dirichlet}, \cite{BaIm-07}, and \cite[Sections 1-5]{CaSi-09RegularityIntegroDiff}.  We emphasize the \emph{sign convention} for subsolutions and supersolutions in this work corresponds to that of \cite{CaSi-09RegularityIntegroDiff} and \cite{CaCa-95}.  A more detailed history and presentation pertaining to viscosity solutions of first and second order equations can be found \cite{CrIsLi-92}.
\end{rem}

\begin{rem}\label{rem:Kernels}
We would like to at least make an attempt to give a concise explanation as to the need for the peculiar restriction that $M^-$ is of the form (\ref{eq:MminusDef}) and hence why we can only treat kernels as in (\ref{eq:LinearGUISCH}).  Roughly speaking, our approach starts with the function $u$, which solves a $\sig$-order equation with $\sig<2$, and takes its Riesz potential  to invert the order by an amount $2-\sig$.  Then to the potential, say $P$, of our original $u$, we can apply familiar second order techniques.  In particular, we use the formula for the determinant of $D^2P(x)$ for those $x$ with $D^2P(x)\geq0$: 
\begin{equation*}
\left(\det(D^2P(x))\right)^{1/n} = \frac{1}{n}\inf\{\Tr(AD^2P(x)) : A\geq0\ \text{and}\ \det(A)=1\}.
\end{equation*}
When this computation is translated back to the original $u$, the derivatives on $P$ can actually be transfered to derivatives on the \emph{kernel} used to construct $P$, which results in a $\sig$-order operator containing the term
\begin{equation*}
L_A(u,x)=(2-\sig)\int_{\real^n}\del u(x,y) \frac{y^TA(x)y}{\abs{y}^{n+\sig+2}}dy.
\end{equation*}
All of these steps are implemented carefully in Sections \ref{sec:Envelope} - \ref{sec:DetD2P}.
\end{rem}

\begin{rem}
Due to the slight degeneracy allowed by $\Tr(A)\geq\lam$ in the kernels, $y^TAy\abs{y}^{-n-\sig-2}$, comprising the definition of $M^-$, Theorem \ref{thm:Main} in fact leads to new regularity results.  Specifically for functions, say $w$, satisfying boundedness of the maximal/minimal operators:
\begin{equation*}
M^+(w,x)\geq -C\ \ \text{and}\ \ M^-(w,x)\leq C,
\end{equation*} 
Theorem \ref{thm:Main} provides a key step for the regularity theory, and this will be discussed further in Section \ref{sec:NonlocalLepsilon}.
\end{rem}

The analysis of fractional order integro-differential equations has gained much attention lately (see Section \ref{sec:Background}).  It seems surprising, however, that despite so much interest, such estimates as those in Theorem \ref{thm:Main} have not been obtained thus far.   At least one need for an ABP type estimate for nonlocal equations was noted in \cite[Remark 3.3]{Schw-10Per} regarding the ability to prove homogenization for stationary ergodic families of integro-differential equations via the methods of \cite{CaSoWa-05}.  This stochastic homogenization result will be presented in \cite{Schw-11Stoch}.  Other potential applications will be briefly discussed in Section \ref{sec:Applications}.  In \cite[Section 8]{CaSi-09RegularityIntegroDiff} an ABP type result was proved, but it involves the maximum of $f$ over a \emph{finite collection of cubes}, representing a Riemann Sum approximation to the usual ABP of second order theory.  The result in \cite[Section 8]{CaSi-09RegularityIntegroDiff} was sufficient for the purposes of regularity theory (for very general kernels), nevertheless it has not been sufficient to answer Question \ref{Ques:ABasicQuestion}, which is needed for the stochastic homogenization.

To prove Theorem \ref{thm:Main} we introduce new machinery, specifically a nonlocal and fractional order replacement for the convex envelope and the Monge-Amp\`ere operator of a Riesz potential of $u$, which can be expressed as a nonlinear integro-differential operator in $u$ itself. This auxiliary operator makes up for current lack of a definitive analogue of the Monge-Amp\`ere operator (which has both a divergence and a non-divergence structure), which has been and continues to be a significant obstacle to understanding the more geometric aspects of the regularity theory for 	nonlinear integro-differential equations.

We conclude this introduction with a brief outline of the sections of the paper.  The two new tools mentioned in the previous paragraph, a replacement for the convex envelope and the auxiliary Monge Amp\`ere operator, are explained in Sections \ref{sec:Envelope} and \ref{sec:DetD2P} respectively.  This new $\sig$-order envelope solves a $\sig$-order equation, just as in the second order setting, \cite[Chapter 3]{CaCa-95}, with the convex envelope and a second order equation.  However, in order to gain access to familiar geometric arguments involving the usual convex envelope, we must push the order of the envelope and its equation up to $2$.  This is done by taking the Riesz potential of the envelope, and its properties are presented in Section \ref{sec:Pressure}.  In Section \ref{sec:DetD2P} we develop the Monge-Amp\`ere operator of the potential as a $\sig$-order operator acting on the envelope. In Section \ref{sec:Proof} we provide the final details of Theorem \ref{thm:Main}.  In Sections  \ref{sec:MongeAmpere}, \ref{sec:ImportantLimitSigmaTo2}, \ref{sec:Useful Consequences}, and \ref{sec:NonlocalLepsilon} we present discussions involving respectively:  the auxiliary operator of Monge-Amp\`ere type, limits of these results as $\sig\to2$, other useful theorems related to Theorem \ref{thm:Main} including comparison, and a proof of the $L^\ep$ estimates of elliptic regularity theory which nearly identically follows the classical one of \cite[Lemmas 4.5 and 4.6]{CaCa-95}, from which the Harnack inequality follows easily. 

\section{Background and Main Ideas}\label{sec:Background}
\setcounter{equation}{0}

\subsection{Historical Background}
Analysis of integro-differential equations is by no means a new field.  It is intimately linked, via their infinitesimal generators, with modeling involving L\'evy or L\'evy-Ito processes-- which are much richer and more general than diffusion processes giving rise to second order non-divergence equations.  A key feature intrinsic in modeling with L\'evy-Ito processes and integro-differential equations is that they allow for \emph{long range} interactions of various forms, and that the underlying stochastic processes can have jumps-- in contrast to diffusion processes, which are continuous.  There are \emph{many} recent applications of these processes and their generators, and we list only a characteristic few: for particle systems and their hydrodynamic limit \cite{GiLe-97}, for financial modeling \cite{CoTa-04} and \cite{CoVo-2005IntDiffOptions}, for optimal control (also related to financial modeling)  \cite{Pham-98}, \cite{Sone-86JumpMarkov}, and \cite{Sone-86State-SpaceSIAM}, for image processing \cite{GiOs-2008NLImagePro}.

There has been recently a growing interest in the analysis of integro-differential equations.  The renewed interest seems not only due to the importance in modeling (L\'evy processes have been around for a very long time) but to advances in both the probabilistic and partial differential equation analysis for treatment of equations related to (\ref{eq:PIDEgeneral}) and (\ref{eq:PIDEmain}).  Here we list only a few references  for recent advances, and suggest the interested reader to consult those references contained within, as a complete list would be impossible.  On the analysis/PDE side,  integro-differential equations were brought to the viscosity solutions framework in \cite{Sone-86State-SpaceSIAM}, \cite{Sone-86JumpMarkov}, and later in \cite{Awat-91}, and the comparison theory for these equations was recently improved and re-presented in \cite{BaIm-07}, with the Dirichlet problem being considered in \cite{BaChIm-08Dirichlet}.  H\"older regularity issues were considered in \cite{Silv-2006Holder} for ``non-divergence'' form equations (cf. \cite[Section 3.6]{Silv-2006Holder} for a discussion of ``divergence'' versus ``non-divergence'' in this context), and a series of works making a uniform theory for both H\"older and higher regularity of more general versions of (\ref{eq:PIDEgeneral}) was done in \cite{CaSi-09EvansKrylov}, \cite{CaSi-09RegularityByApproximation}, and \cite{CaSi-09RegularityIntegroDiff}.  Also by PDE methods, parabolic regularity was obtained for divergence form equations in \cite{CaChVa-10}.  Regularity for equations related to (\ref{eq:PIDEgeneral}) was investigated by different, but still PDE, methods in \cite{BaChIm-11Holder}.  Probabilistic analysis was used to investigate many integro-differential equations in both ``divergence'' and ``non-divergence'' forms to obtain various important results including H\"older regularity, Harnack inequalities, and other finer properties in \cite{AbKa-09}, \cite{BaBaChKa-2009DirichletForms}, \cite{BaKa-05Harnak}, \cite{BaKa-05Holder}, \cite{BaLe-2002Harnack}, and \cite{BaLe-2002TransitionProb}.

\subsection{Another Simple Motivation For (\ref{eq:PIDEmain}) and Theorem \ref{thm:Main}}
Let us now setup another very simple question related to that posed in the Introduction. It arises in the study of obstacle problems, and was utilized in \cite{CaSoWa-05} for homogenization (see \cite{Schw-11Stoch} for the same use in a nonlocal setting).  Consider the solution of an elliptic equation,

\begin{equation}
\begin{cases}
\ds F(v,x) = 0 &\text{ in } B_1\\
v= 0 &\text{ on } \real^n\setminus B_1
\end{cases}
\end{equation}
and the solution to the corresponding obstacle problem with the same operator, $u$, solving

\begin{equation}
u = \sup\{\phi: F(\phi,x)\geq0\  \text{in}\ B_1\  \text{and}\ \phi\leq0\ \text{in all of}\  \real^n\}.
\end{equation}
A reasonable question is: what is the difference between $u$ and $v$?  Naturally, since $u$ is a subsolution of the equation for $v$ and they share the same data in $\real^n\setminus B_1$, then at least we can conclude $v\geq u$.  But what about the reverse inequality?  Ellipticity  (see \cite[Definition 3.1]{CaSi-09RegularityIntegroDiff} and \cite[Chapters 2 and 5]{CaCa-95}) tells us that

\begin{equation*}
M^-(u-v,x)\leq F(u,x)-F(v,x).
\end{equation*}
Two of the key properties of the solutions to obstacle problems is that $u$ is actually a \emph{solution} to $F(u,x)=0$ whenever $u(x)\not=0$, and that $u$ inherits the equation from $0$ when it does happen that $u(x)=0$, which is $F(u,x)\leq F(0,x)$ (here we mean the operator applied to the constant, $0$, function).  Therefore, the relevant inequalities are 
\begin{equation*}
M^-(u-v,x)\leq F(u,x)-F(v,x)\leq \Indicator_{\{u=0\}}(x)F(0,x),
\end{equation*}
which by the boundedness of $F(0,x)\leq C$ becomes

\begin{equation}\label{eq:MotivationEq}
\begin{cases}
\ds M^-(u-v,x)\leq C\Indicator_{\{u=0\}}(x) &\text{ in } B_1\\
u-v= 0 &\text{ on } \real^n\setminus B_1.
\end{cases}
\end{equation}

After considering an $f$ which is a continuous approximation from above of $\Indicator_{\{u=0\}}(x)$, one can clearly see the question at hand and the motivation for this work: How can (\ref{eq:MotivationEq}), and hence also (\ref{eq:PIDEmain}),  be treated in a way which only depends upon \textit{measure theoretic} properties of $f$ and not \textit{continuity} properties of $f$?  

This leads us directly back to Question \ref{Ques:ABasicQuestion}.  The natural setting for the homogenization problem will involve a sequence of $u_k$ and $v$ as above with the additional information that $\abs{\{u_k=0\}}\to0$ as $k\to\infty$ (see \cite[Section 3, specifically Lemma 3.5]{Schw-11Stoch}).  The goal in such a situation is to be able to conclude that 
\begin{equation*}
\norm{u_k-v}_{L^\infty}\to0\ \ \text{as}\ \ k\to\infty.
\end{equation*}
\noindent
Before this work was completed, there seemed to be very little results, in fact the authors found none, which could handle a situation in which the right hand side of (\ref{eq:PIDEmain}) would converge in any sense other than in $L^\infty$ (the relevant results in both \cite{BaChIm-08Dirichlet} and \cite{CaSi-09RegularityIntegroDiff} strongly require $f$ to be continuous and converge to $0$ in $L^\infty$ to conclude similar statements).  At least within the restricted class of operators presented here, Theorem \ref{thm:Main} answers this question, which was not possible with previous known results.

\subsection{Theorem \ref{thm:Main} As a Proof of Concept For Broader Results}
As hinted in the introduction, the full class of nonlinear nonlocal elliptic operators is much richer than simply those given in our definition of $M^-$.  In \cite{CaSi-09RegularityIntegroDiff}, there is a version of $M^-$ which covers many more operators  than the one appearing in (\ref{eq:MminusDef}); it is given as
\begin{align}\label{eq:MminusDefCaSi}
M^-_{CS}(v,x) &= \int_{\real^n}(2-\sig)\left(\lam\frac{(\del v(x,y))^+}{\abs{y}^{n+\sig}}-\Lam\frac{(\del v(x,y))^-}{\abs{y}^{n+\sig}}\right)dy\\
&= \inf_{\lam\leq a(x,y)\leq \Lam }\left\{(2-\sigma)\int_{\real^n}\del v(x,y) \frac{a(x,y)}{\abs{y}^{n+\sig+2}}dy \right\}.
\end{align}
Hence the results of \cite{CaSi-09RegularityIntegroDiff} correspond to a much larger family of equations than the one here.  To go even further, as done in the linear case considered in \cite{BaKa-05Holder}, one could use this same definition with measures, $n(x,dy)$, ``comparable'' in a less restrictive sense to $\abs{y}^{-n-\sig}dy$ as opposed to only those measures with a density as above, $n(x,dy)=a(x,y)\abs{y}^{-n-\sig}dy$, for $a$ uniformly bounded from below and above.

A reasonable guess for an ABP type theorem applying to (\ref{eq:PIDEmain}) in these classes above would be 
\begin{equation}\label{eq:ABPwithoutInfinityNorm}
-\inf_{B_1}(u)\leq \frac{C}{\lam}\norm{f}_{L^p},
\end{equation}
for some $p>n$, depending on $\sig$.  At the time \cite{CaSi-09RegularityIntegroDiff} and \cite{Schw-10Per} were completed, it was not known whether or not such a result \emph{should} be true for the general nonlocal ellipticity class or even a restricted class such as the one considered here.  Now Theorem \ref{thm:Main} indicates that at least some form of ABP type result holds for a restricted class of operators.  This gives hope that ABP type estimates, such as those in Theorem \ref{thm:Main}, for the general class of nonlocal elliptic equations may still be true.  Furthermore, it opens the door to answering the question of whether or not (\ref{eq:ABPwithoutInfinityNorm}) is appropriate to expect for (\ref{eq:PIDEmain}).  The moral of the story is Theorem \ref{thm:Main} indicates it is not that ABP type \emph{results} are incompatible with the intrinsic properties of (\ref{eq:PIDEmain}), but more importantly that the \emph{existing machinery} is incompatible with (\ref{eq:PIDEmain}).  This illuminates one of the main difficulties in analysis of nonlocal equations: there is no known general framework to take the place of the very important Monge-Amp\`ere operator and the gradient mapping of convex functions in the second order theory.

\subsection{Failure of The Convex Envelope}
For the convex envelope, say $\Gamma$, the information in the proof of the original second order ABP estimate is naturally encoded in the set $\{x:\det(D^2\Gamma(x))>0\}$.  This matches very well with second order equations because $\det(D^2\Gamma)=0$ whenever $\Gamma$ and $u$ do not coincide, and on the set where they do coincide, $\Gamma$ inherits the supersolution property of $u$ simply by comparison.  In the nonlocal setting, the function $w=\abs{x}^{\al}-1$ will solve (\ref{eq:PIDEmain}) for $\al>\sig$ with a right hand side, $f$, which will still be bounded and continuous.  However for $\sig<\al<1$, the \textit{convex} envelope of $w$ only coincides with $w$ at one point, $x=0$.  Hence there would be a contradiction with the usual ABP estimate which would read:
\begin{equation*}
-\inf_{B_1}\{w\} \leq C\norm{f}_{L^n(\{w=\Gamma\})}=0.
\end{equation*}
The problem here is exactly that such fractional order equations allow for much more drastic bending of supersolutions than is possible in a second order setting.  Therefore, the convex envelope is not well suited for a measure theoretic estimate, such as Theorem \ref{thm:Main}.  For purposes of studying the regularity of (\ref{eq:PIDEgeneral}) with even more general operators than the $F$ appearing in (\ref{eq:Fullynonlinear}), the convex envelope was sufficient and strongly used in \cite[Section 8]{CaSi-09RegularityIntegroDiff}.  In contrast to \cite{CaSi-09RegularityIntegroDiff}, in this work we must construct a different envelope which will be better suited to handle such a function as $w$, above.  This will be the content of Section \ref{sec:Envelope}

\subsection{Main Ideas and Sketch of The Proof}\label{sec:Sketch}
Here we give a brief sketch of how Theorem \ref{thm:Main} is proved.  The main ideas are the same as in the second order theory (cf. \cite[Chapter 3]{CaCa-95} or \cite[Chapter 9]{GiTr-98}), but the machinery and implementation are a bit more involved.  

Everything starts with an appropriate envelope of $u$ \emph{from below}, which we will denote as $\Gamma$. It must be such that
\begin{equation}\label{eq:SketchInfGamma}
\inf\{u\}=\inf\{\Gamma\}.
\end{equation}

The first key feature is the existence of an operator, which we denote as $\D_\sig$, such that 
\begin{equation}\label{eq:SketchIntegralDsigma}
\left(-\inf{\Gamma}\right)^{p} \leq C\int_{B_3} \left(\D_\sig(\Gamma,x)\right)^{p}dx
\end{equation}
for some $p$ possibly depending on $n$ and $\sig$.  At this point for the second order theory, we would have $\Gamma$ as the convex envelope of $u$ and $\D_\sig$ would be $\det(D^2\Gamma)^{\BlueComment{1/n}}$, in which case the previous inequality is a consequence of \emph{the geometry} of convex functions (what is known as ``Aleksandrov's estimate'').  The second key feature is that the operator $\D_\sig$ must satisfy \emph{for a special class of} $\Gamma$,
\begin{equation}\label{eq:SketchDsigmaMminus}
\D_\sig(\Gamma,x)\leq C M^-(\Gamma,x).
\end{equation}
Finally, the third key feature is that
\begin{equation}\label{eq:SketchDsigmaOffContactSet}
\D_\sig(\Gamma,x)\leq0\  \text{whenever}\  \Gamma(x)\not=u(x).
\end{equation}
This is essential so that all the contribution of $\D_\sig(\Gamma)$ can be ignored except for the contact set between $u$ and $\Gamma$.  This way, information about $M^-(u,x)\leq f(x)$ in the viscosity sense can be carried over to $\Gamma$ via comparison, and the other values of $\D_\sig(\Gamma)$ will not pollute the integral in the estimate (\ref{eq:SketchIntegralDsigma}).  Essentially $\Gamma$ acts as a test function on $u$, and the defining feature of viscosity solutions is that at those points where $\Gamma$ touches $u$ from below, 
\begin{equation}\label{eq:SketchMminusGammafx}
M^-(\Gamma,x)\leq f(x).
\end{equation}
If all of (\ref{eq:SketchInfGamma}), (\ref{eq:SketchIntegralDsigma}), (\ref{eq:SketchDsigmaMminus}), (\ref{eq:SketchDsigmaOffContactSet}), and (\ref{eq:SketchMminusGammafx}) can be satisfied (which is very nontrivial), then Theorem \ref{thm:Main} can be proved.

It turns out to be quite difficult to simultaneously achieve all three of the key features, (\ref{eq:SketchIntegralDsigma}), (\ref{eq:SketchDsigmaMminus}), and (\ref{eq:SketchDsigmaOffContactSet}).  This delicate balance is what leads to Theorem \ref{thm:Main} only being proved for a restricted class of operators, instead of the much more general class of \cite{CaSi-09RegularityIntegroDiff}. 

\subsection{Notation}\label{subsec:Notation}
We list here some notation which will be used throughout this work.
\begin{enumerate}
\item The second difference operator: $\del v(x,y):= v(x+y)+v(x-y)-2v(x)$
\item The $n-1$ dimensional sphere $S^{n-1}=\partial B_1\subset \real^n$ and its surface area $\om_n$
\item The complement of a set, $A^c=\real^n\setminus A$
\item The following universal constant will appear often (when $n\geq 2$), \[A(n,\alpha)= \pi^{\alpha-\frac{n}{2}}\frac{\Gamma(\frac{n-\alpha}{2})}{\Gamma(\frac{\alpha}{2})}\]
It can be showed that $A(n,\alpha) \sim \alpha$ as $\alpha \to 0^+$ (see \cite[Chapter I, page 44]{Land-72}).
\item The Riesz Potential of order $\al$, $\ds K_\al(y):= A(n,\al) \abs{y}^{-n+\al}$
\item The Fractional Laplacian (when $n\geq 2$)
\begin{equation*}
-\Laplace{\sig/2}v(x) = \tfrac{\sig (n+\sig-2)}{2}A(n,2-\sig)\int_{\real^n}\del v(x,y)\abs{y}^{-n-\sig}dy.
\end{equation*}

\item The \emph{one dimensional} Fractional Laplacian in the direction $\tau \in\S^{n-1}$, (for some universal constant $(A(1,2-\sig)$ which we will not need to specify here),
\begin{equation*}
-(-\Delta)^{\sigma/2}_\tau v(x) :=\tfrac{\sig (1+\sig-2)}{2}A(1,2-\sig) \int_{\real}(\delta v(x,s\tau))\abs{s}^{-1-\sigma}ds
\end{equation*}
\item The inf-convolution of a function $v$, $v_\ep(x):=\inf_{y}\{u(y)+(2\ep)^{-1}\abs{x-y}^2\}$ (See \cite[Equations (14), (15)]{JeLiSo-88UniquenessSecondOrder}, \cite{LaLi-1986RegularizationHilbertSpace}, \cite[Appendix]{CrIsLi-92})
\item v is $C^{1,1}$ from above at $x$  (respectively from below) \cite[Definition 2.1]{CaSi-09RegularityIntegroDiff} if there exists a radius $r$, a vector $p$, and a constant $M$ such that for all $\abs{y}\leq r$ 
\begin{equation*}
v(x+y)-v(x)-\inner{p}{x}\leq M\abs{y}^2
\end{equation*}
(respectively $v(x+y)-v(x)-\inner{p}{x}\geq M\abs{y}^2$)

\item Contact Set between $u$ and its envelope, $\Gamma$ (defined in Definition \ref{def:GammaU}), 
\begin{equation*}
K_u:=\{x:u(x)=\Gamma(x)\}
\end{equation*}
\item The convex envelope in $B_R$, $v^{CE}$, for a function $v\geq0$ in $\real^n\setminus B_R$ is 
\begin{equation*}
v^{CE}(x)=\sup\{l(x):l\ \text{affine and}\ l\leq v\ \text{in}\ B_R\}
\end{equation*}
 
\item The extremal operators and ellipticity constants for the family governed by $L$ of (\ref{eq:LinearGUISCH}) are $M^-$ and $M^+$ defined in (\ref{eq:MminusDef}) and (\ref{eq:MPlusDef}).

\end{enumerate}

\section{The Fractional Envelope}\label{sec:Envelope}
\setcounter{equation}{0}

This section is dedicated to constructing a new envelope (as a replacement for the convex envelope)  for the supersolution, $u$, of (\ref{eq:PIDEmain}) which will be essential to proving Theorem \ref{thm:Main}.  The main idea is to imitate the most important features of the convex envelope as they pertain to the second order theory, cf. \cite[Chapter 3]{CaCa-95} and \cite[Proposition 2.12 and Appendix A]{CaCrKoSw-96} for the fully nonlinear version and \cite[Chapter 9, Section 1]{GiTr-98} for the linear version.  The goal is for the new fractional order envelope to be in a class of functions for which $M^-$ is comparable to a nonlocal version of the Monge-Amp\`ere operator, and also to cause this nonlocal Monge-Amp\`ere operator to vanish whenever $u$ and its envelope do not touch.  For the sake of explanation, let $\Gamma$ be the convex envelope of $u$ and $\M^-$ to be the second order minimal Pucci operator (see \cite[Chapter 2, Section 2]{CaCa-95}).  For $\Gamma$, the two requirements just mentioned above correspond to the two facts implied by convexity and the envelope property: 
\begin{align*}
&\prod_{k=1}^ne_k \leq \left(\frac{1}{n}\sum_{k=1}^n e_k\right)^n\leq \left(\frac{1}{\lam}\M^-(\Gamma)\right)^n\ \ \text{for the eigenvalues,}\  e_k,\ \text{of}\ D^2\Gamma\\
\intertext{and}
&\det(D^2\Gamma(x))=0\ \ \text{if}\ u(x)\not=\Gamma(x).
\end{align*}

\noindent
The appropriate analogs to our new envelope appear subsequently as Lemmas \ref{lem:NonlocalDetMminusOrder} and \ref{lem:NonlocalDetContactSet}.\bigskip

In order to define the new envelope, we introduce a matrix-valued integro-differential operator, which will play the (auxiliary) role of an ``integro-differential Hessian''.

\begin{DEF}
Let $n\geq 2$, $\sig\in(0,2)$, and let $v$ be a bounded function such that \[\ds \int_{\real^n}\frac{\abs{\del v(x,y)}}{\abs{y}^{n+\sig}}<\infty,\] 
then we define	
	\label{def:FracHessU}
	\begin{equation}\label{eq:FracHessU}
	h_\sig(v,x) :=  \tfrac{(n+\sig-2)(n+\sig)}{2}A(n,2-\sig) \int_{\mathbb{R}^n}\frac{y \otimes y }{|y|^{n+\sig+2}}\del v(x,y)dy
	\end{equation}
	
\end{DEF}

\noindent
The use of this matrix is dictated by our approach based on Riesz potentials in Section \ref{sec:Pressure}, and it will become more clear in Section \ref{sec:DetD2P}. By this we mean the following, if $v$ is smooth enough and $K_{2-\sig}$ is the Riesz Potential (see section \ref{subsec:Notation}) then a consequence of Lemma \ref{lem:SingularIntegralPij} is

$$D^2 ( v * K_{2-\sig}) = h_\sig(v,x) +\tfrac{(-\Delta)^{\sig/2}v(x)}{n+\sig}\; \Id$$

\begin{rem} It is immediate that if $u$ touches $v$ from above at $x$ and they are both smooth enough then we have the matrix inequality $h_\sig(u,x)\geq h_\sig (v,x)$.
\end{rem}

\begin{rem}\label{rem:EsigSigTo2}
As $\sig \to 2^-$ (and thus $\alpha \to 0$), the above identity gives us
$$h_\sig(v,x) \to D^2v(x) + \tfrac{\Delta v(x)}{n+2} \Id$$.
	
\end{rem}

\begin{rem} The fully nonlinear integro-differential operators that our methods handle are exactly those that can be written in the form
$$I(u,x)=\inf_a\sup_b\{\Tr(A^{ab}(x)h_\sig(u,x)\ :\ \Tr(A^{ab}(x))\geq\lam\ \text{and}\ A^{ab}(x)\leq \Lam\Id\},$$ 	
where $a$ and $b$ can be taken over arbitrary index sets.
\end{rem}

Moving forward, the \textit{``$\sigma$-order envelope''}  of $u$ is defined as the solution of an obstacle problem which is a generalization of the obstacle problem satisfied by the convex envelope.  
First we define an operator which will be a fractional order replacement of the first eigenvalue of the Hessian operator used in the construction of the convex envelope for second order equations.

\begin{DEF}\label{def:Esigma}
Let $\lam_1(B)=\min\{e: e\ \text{is an eigenvalue of}\ B\}$ be the first eigenvalue operator of a matrix.  $E_\sig$ is defined as
\begin{equation}\label{eq:Esigma}
E_\sig(v,x) := \lambda_1 \{ h_\sig(v,x) \} = \inf \limits_{\tau \in S^{n-1}}\{ \inner{(h_\sig(v,x)\; \tau)}{\tau}\}.	
\end{equation}
\end{DEF}
\noindent
Now, using $E_\sig$, we can define our ``\emph{$\sig$-order envelope}''.

\begin{DEF}\label{def:GammaU}
\begin{equation}\label{eq:SigmaEnvelope}
\Gamma^\sigma_u (x) = \sup\left\{v(x) :  E_\sigma( v)\geq 0\  \text{in}\  B_3,\ \text{and}\ v\leq u\Indicator_{B_1}\ \text{in}\ \real^n \right\},
\end{equation}
and the contact set between $\Gamma^\sig_u$ and $u$ will be denoted as
\begin{equation}\label{eq:ContactSet}
K_u:= \{x:\ \Gamma^\sig_u(x)=u(x)\}.
\end{equation}
\end{DEF}

\noindent

\begin{rem}
The usual convex envelope used in the second order theory has an analogous structure as the solution to an obstacle problem.  The interested reader should consult \cite{Ober-07} for an exposition.
\end{rem}

\noindent
By remark \ref{rem:EsigSigTo2}, this new operator, $E_\sig$, does not recover the smallest eigenvalue of the Hessian of $u$ in the limit $\sig \to 2$, instead (see Proposition~\ref{prop:UniformConvergenceEnvelopes})
\begin{equation*}
\lim_{\sig\to2}E_{\sig}(u,x) = \lambda_1(D^2u(x)) +\tfrac{1}{n+2}\Delta u(x).
\end{equation*}
In particular, as $\sigma \to 2^-$ the envelope $\Gamma^\sig_u$ converges to the solution of the upper obstacle problem for the operator above with $u$ as the upper obstacle. The solution to this problem lies above the convex envelope of $u$ but it does not agree with it.

\begin{rem}\label{rem:EsigStar}
All of the results of this section hold (with small modifications) if instead we use
$$E_\sig^*(u,x) := \inf \limits_{\tau \in S^{n-1}} \left \{-(-\Delta)^\sig_\tau u(x) \right \}$$

\noindent Here $(-\Delta)^\sig_\tau u$ is the one dimensional fractional Laplacian of $u$ defined in Section \ref{subsec:Notation}. This is a much more drastic notion of envelope (such an envelope would be below the one defined using $E_\sig$, and touch $u$ on a much smaller domain, which can be problematic). In principle, it is perfectly tailored to handle much more general kernels: 
$$K(y)=\frac{a(y)}{|y|^{n+\sig}} \;\mbox{ where }\; a(r y) = a(y) \;\forall \;r \in \real.$$

\noindent However, it is not yet clear how one can go about proving $L^\infty$ bounds for this envelope in terms of a ``convenient'' integral quantity (i.e. one that can be controlled by and integral of  $ M^-(\Gamma)$). In contrast, the envelope we use admits integral bounds granted by its compatibility with the Riesz potential. Whether this argument work for the more ``drastic'' envelope is not clear and perhaps an entirely different approach is need,  this question will be addressed in future work.

\end{rem}	

\begin{rem}
Some nonlinear one directional operators related to this alternative operator $E^*_\sig$ have also been considered in \cite{BjCaFi-2010TugOfWar}. In particular, the one dimensional Fractional Laplacian in the direction of $\nabla u$ gives rise to a natural integro-differential analogue of the Infinity Laplacian. 
\end{rem}

For the remainder of this section, some properties of $\Gamma^\sig_u$ will be collected for later use.  Also, we will dispense with the notation $\Gamma^\sig_u$ and instead simply use $\Gamma$ except in special cases which will be appropriately noted.

\begin{lem}[Compact Support of $\Gamma$]\label{lem:GammaCompactSupport}
$\Gamma=0$ in $\real^n\setminus B_3$.
\end{lem}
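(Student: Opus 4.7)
The plan is to prove the two one-sided inequalities $\Gamma \leq 0$ and $\Gamma \geq 0$ on $\real^n \setminus B_3$, each by an elementary argument; the content is concentrated in the lower bound, where I must exhibit an explicit admissible competitor realizing the value $0$.

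The upper bound $\Gamma \leq 0$ on $\real^n \setminus B_3$ is immediate from the obstacle constraint in Definition~\ref{def:GammaU}: every admissible $v$ satisfies $v \leq u\,\Indicator_{B_1}$ pointwise, and since $B_1 \subset B_3$ the obstacle $u\,\Indicator_{B_1}$ vanishes on $\real^n \setminus B_3$. Passing to the supremum yields $\Gamma \leq 0$ there.

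For the matching lower bound, I would exhibit a single admissible competitor equal to $0$ on $\real^n \setminus B_3$. With $M := \|u\|_{L^\infty(B_1)}$, the natural candidate is the step function
\[
v^{\ast}(x) := -M\,\Indicator_{B_3}(x),
\]
taking $B_3$ open so that $v^{\ast}$ is upper semicontinuous. The obstacle inequality $v^{\ast} \leq u\,\Indicator_{B_1}$ is clear: $-M \leq u$ on $B_1$ by choice of $M$, $-M \leq 0$ on $B_3 \setminus B_1$, and $0 \leq 0$ on $\real^n \setminus B_3$. The equation inequality $E_\sigma(v^{\ast}) \geq 0$ on $B_3$ follows from the observation that at every $x \in B_3$ the value $v^{\ast}(x) = -M$ is the global infimum of $v^{\ast}$, so
\[
\delta v^{\ast}(x, y) = v^{\ast}(x+y) + v^{\ast}(x-y) + 2M \geq 0 \qquad \text{for every } y \in \real^n.
\]
Integrating this pointwise inequality against the matrix-valued, positive-semidefinite kernel $\tfrac{y \otimes y}{|y|^{n+\sigma+2}}$ gives $h_\sigma(v^{\ast}, x) \geq 0$ as a symmetric matrix, and hence $E_\sigma(v^{\ast}, x) = \lambda_1(h_\sigma(v^{\ast}, x)) \geq 0$. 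Therefore $\Gamma(x) \geq v^{\ast}(x) = 0$ on $\real^n \setminus B_3$, which combined with the upper bound completes the proof.

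The hard part will be justifying that $v^{\ast}$, which is discontinuous across $\partial B_3$, genuinely qualifies as a competitor in the function class implicit in Definition~\ref{def:GammaU}, where $E_\sigma(v) \geq 0$ should be understood in the viscosity (subsolution) sense. I expect the argument above to transfer directly: for any $C^2$ test function $\phi$ touching $v^{\ast}$ from above at an interior $x \in B_3$, necessarily $\phi(x) = -M$ and $\phi \geq -M$ in a neighborhood of $x$, so the mixed nonlocal second difference built from $\phi$ near $x$ and $v^{\ast}$ far from $x$ is nonnegative for every $y$, since both $\phi$ and $v^{\ast}$ remain $\geq -M = \phi(x)$. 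Should this direct verification be unsatisfying, one can approximate $v^{\ast}$ by $-M\phi_\varepsilon$ with $\phi_\varepsilon$ a smooth radial cutoff satisfying $\phi_\varepsilon \equiv 1$ on $\overline{B_3}$ and $\phi_\varepsilon \equiv 0$ outside $B_{3+\varepsilon}$; the same global-infimum computation applies pointwise to $-M\phi_\varepsilon$ on $B_3$, and a limit $\varepsilon \to 0^+$ recovers the conclusion.
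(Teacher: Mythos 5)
Your proof is correct, but it takes a genuinely different route from the paper's. The paper disposes of this lemma in one line by citing the Dirichlet condition $\Gamma=0$ on $\real^n\setminus B_3$ that is built into the obstacle-problem formulation (\ref{eq:AppendixObstacleViscosityEq}) established in the Appendix; in other words, it appeals to the standard identification of the supremum in Definition \ref{def:GammaU} with the unique viscosity solution of the variational inequality, whose exterior datum is zero by fiat. You instead verify the statement directly from the supremum definition: the upper bound $\Gamma\leq 0$ outside $B_1$ (hence outside $B_3$) from the obstacle constraint, and the lower bound by exhibiting the explicit competitor $v^{\ast}=-M\,\Indicator_{B_3}$, checking that $\delta v^{\ast}(x,\cdot)\geq 0$ at every $x\in B_3$ because $-M$ is the global infimum, so that $h_\sig(v^{\ast},x)\geq 0$ and $E_\sig(v^{\ast},x)\geq 0$ both classically (the integrability condition holds since $v^{\ast}$ is constant near any interior point) and in the viscosity sense. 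This is more self-contained -- it does not presuppose the Appendix machinery, and in particular avoids the slight circularity of invoking a formulation whose exterior condition is essentially the claim being proved -- at the cost of the small technical checks you carry out (upper semicontinuity of $v^{\ast}$, admissibility of a discontinuous competitor in the viscosity class). One cosmetic remark: your fallback approximation by $-M\phi_\ep$ only yields $\Gamma\geq 0$ on $\real^n\setminus B_{3+\ep}$ and hence, after $\ep\to 0^+$, on the open complement of $\overline{B_3}$; the direct argument with $v^{\ast}$ itself is preferable since it also covers $\partial B_3$.
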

\begin{proof}[Proof of Lemma \ref{lem:GammaCompactSupport}]

This lemma is an immediate consequence of the equation satisfied by $\Gamma$, which is treated as (\ref{eq:AppendixObstacleViscosityEq}) in the Appendix.

\end{proof}

As a solution to an obstacle problem, $\Gamma$ attains regularity from two sources. A one sided regularity of $u$ from above is transferred to $\Gamma$ via the obstacle, and further regularity of $\Gamma$ from below is attained via the structure of the operator, $E_\sig$.  We record the result as it pertains to $\Gamma$ in Proposition \ref{lem:GammaRegularityByAbove}, but the result holds in more generality and is of independent interest-- we have chosen to present these related results in the Appendix, Section \ref{sec:Appendix}.

\begin{prop}[$\Gamma$ Regularizes From Below]\label{lem:GammaRegularityByAbove}
Let $\Om\subset\subset B_3$. If $u$ satisfies respectively 
\begin{itemize}
\item[(i)] $-\Laplace{\sig/2}u\leq C$ classically in $B_1$,
\item[(ii)] $u$ is $C^{1,1}$ from above with a bound, $C$
\end{itemize}
then there exists a $C_1$ depending on $\Om$, $\sig$, $n$, $\lam$, $\Lam$ $\norm{u}_{L^\infty}$ and $C$ such that for a.e. $x\in\Om$, respectively
\begin{itemize}
\item[(i)] $$0\leq h_\sig(\Gamma,x)\leq C_1\Id\ \ \text{and}\ \ \norm{\Gamma}_{H^\sig(\Om)}\leq C_1,$$
\item[(ii)]
\[
\int_{\real^n}\frac{\abs{\del \Gamma(x,y)}}{\abs{y}^{n+\sig}}dy\leq C_1.
\]
\end{itemize}

\end{prop}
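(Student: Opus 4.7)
My approach interprets $\Gamma$ as the solution of an obstacle problem and exploits global domination by $u$ to transfer regularity across the contact set. From the characterization deferred to the Appendix, $\Gamma$ satisfies $E_\sig(\Gamma)\ge 0$ in $B_3$ with equality off $K_u$, and $\Gamma\le u\Indicator_{B_1}$ in $\real^n$. Since $u\ge 0$ on $\real^n\setminus B_1$ while $\Gamma\le 0$ there (Lemma~\ref{lem:GammaCompactSupport} gives $\Gamma\equiv 0$ outside $B_3$, and the obstacle is $0$ in $B_3\setminus B_1$), $u$ dominates $\Gamma$ globally. Consequently at every $x\in K_u$ one has $\del\Gamma(x,y)\le\del u(x,y)$ for every $y\in\real^n$; integrating against the positive-semidefinite matrix kernel $y\otimes y/|y|^{n+\sig+2}$ yields the matrix inequality $h_\sig(\Gamma,x)\le h_\sig(u,x)$ (the remark after Definition~\ref{def:FracHessU}).

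\textbf{Contact set analysis.} Combine $h_\sig(\Gamma,x)\le h_\sig(u,x)$ with the positive semi-definiteness $h_\sig(\Gamma,x)\ge 0$ (forced by $E_\sig(\Gamma,x)\ge 0$). A direct computation of the trace of (\ref{eq:FracHessU}) gives
\begin{equation*}
\text{Tr}(h_\sig(v,x))=\tfrac{n+\sig}{\sig}\bigl(-\Laplace{\sig/2}v(x)\bigr),
\end{equation*}
so in case (i) the trace is bounded by $C_1$, and positive semi-definiteness confines each eigenvalue of $h_\sig(\Gamma,x)$ to $[0,C_1]$, i.e.\ $0\le h_\sig(\Gamma,x)\le C_1\Id$ on $K_u$. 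For case (ii), the $C^{1,1}$-from-above estimate gives $\del u(x,y)^+\le C\min(|y|^2,\|u\|_{L^\infty})$, integrable against $|y|^{-n-\sig}$; coupled with $\int\del\Gamma(x,y)|y|^{-n-\sig}\,dy\ge 0$ (which is $\text{Tr}(h_\sig(\Gamma,x))\ge 0$), this forces $\int|\del\Gamma(x,y)||y|^{-n-\sig}\,dy\le C_1$ on $K_u$, since writing $\del\Gamma=\del\Gamma^+-\del\Gamma^-$ the nonnegativity of the signed integral bounds the negative part by the positive one.

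\textbf{Propagation off $K_u$ (main obstacle).} This is the hard step: on $B_3\setminus K_u$ the equation $E_\sig(\Gamma)=0$ only controls the smallest eigenvalue of $h_\sig(\Gamma)$, so an upper bound must be harvested by combining the PDE with the obstacle constraint. My plan is to pass through a penalized family
\begin{equation*}
E_\sig(\Gamma^\ep)=-\tfrac{1}{\ep}(\Gamma^\ep-u\Indicator_{B_1})^+,
\end{equation*}
and run a translation/second-difference comparison: for each small $e\in\real^n$ the auxiliary function $W^\ep(x):=\tfrac12(\Gamma^\ep(x+e)+\Gamma^\ep(x-e))-\Gamma^\ep(x)-C|e|^2$ becomes a subsolution of the penalized equation modulo an error controlled by the regularity of $u$ (case (i) or (ii)), and a comparison principle for the penalized $E_\sig$ gives $W^\ep\le 0$ in $B_3$. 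This yields $\del\Gamma^\ep(x,e)\le C|e|^2$ uniformly in $\ep$, and passing to the limit $\ep\to 0$ extends the contact-set estimates to almost every $x\in\Omega$. The $H^\sig(\Omega)$ bound in (i) then follows by integrating the pointwise control on $h_\sig(\Gamma)$, since the Gagliardo seminorm satisfies $\|\Gamma\|_{H^\sig(\Omega)}^2\simeq\iint_{\Omega\times\real^n}|\del\Gamma(x,y)|^2|y|^{-n-2\sig}\,dy\,dx+\|\Gamma\|_{L^2}^2$. The technical crux is ensuring the translation argument interacts correctly with the nonlocal tails, which is manageable thanks to Lemma~\ref{lem:GammaCompactSupport}.
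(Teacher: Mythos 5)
Your reduction to the contact set is sound and matches the paper's starting point: $E_\sig(\Gamma)\ge 0$ forces $h_\sig(\Gamma,x)\ge 0$, so every eigenvalue is dominated by $\Tr(h_\sig(\Gamma,x))=\tfrac{n+\sig}{\sig}\bigl(-\Laplace{\sig/2}\Gamma(x)\bigr)$, and the paper likewise splits $\del\Gamma$ into positive and negative parts using the sign of the trace integral to get (ii). The genuine gap is in your ``propagation off $K_u$'' step, which is exactly where the paper puts all the work (Proposition \ref{prop:BrezisKinderNonlocal} and Corollary \ref{cor:AppendixAllOperators} in the Appendix). Your translation/second-difference comparison is designed to produce the pointwise bound $\del\Gamma^\ep(x,e)\le C|e|^2$, i.e.\ semiconcavity of $\Gamma$. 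That cannot work in case (i): the hypothesis there is only the \emph{integral} bound $-\Laplace{\sig/2}u\le C$, so the error term in your comparison (which must absorb $\del u(x,e)$ pointwise, through the penalization term $\tfrac1\ep(\cdot)^+$) is not controlled, and indeed the conclusion of (i) is an integral bound $0\le h_\sig(\Gamma,x)\le C_1\Id$, not semiconcavity --- which is generally false for such $u$. Even in case (ii) the translation argument is obstructed by the bounded domain: $\Gamma(\cdot\pm e)$ solves the equation only in $B_3\mp e$ and the boundary data do not match, which is precisely why the paper remarks that the $C^{1,1}$-from-above conclusion would only be ``straightforward'' if the obstacle problem were posed in all of $\real^n$.

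The paper's replacement for your second difference is the \emph{truncated linear operator} $\tilde L(v,x)=\int\del v(x,y)|y|^{-n-\sig}\Indicator_{|y|\le s}\,dy$ with $s<\mathrm{dist}(\Om,\partial B_3)$. Because $E_\sig$ is concave (an infimum of linear translation-invariant operators) and the penalization $\beta_\ep$ is convex, the function $w=\tilde L(v^{\ep\rho},\cdot)$ is a subsolution of the linearized penalized equation, and evaluating at its maximum yields $\max\tilde L(v^{\ep\rho})\le\max\tilde L(\psi)$ --- an ``integrated'' Brezis--Kinderlehrer argument that needs only the integral hypothesis on the obstacle and never leaves $B_3$. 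Case (ii) then follows by running the same bound over the whole extremal class of kernels (Corollary \ref{cor:AppendixAllOperators}). Separately, your derivation of the $H^\sig$ bound is a non sequitur as written: a pointwise bound on $h_\sig(\Gamma,x)$ controls first-power integrals of $\del\Gamma$ against $|y|^{-n-\sig}$, not the squared Gagliardo integrand against $|y|^{-n-2\sig}$; the correct route is the duality estimate $\|(-\Delta)^{\sig/4}\Gamma\|_{L^2}^2=\int\Gamma\,(-\Delta)^{\sig/2}\Gamma$ using $0\le-\Laplace{\sig/2}\Gamma\le C_1$ in the sense of distributions together with $\|\Gamma\|_{L^\infty}$ and compact support.
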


\begin{rem}
In particular, Proposition \ref{lem:GammaRegularityByAbove} ensures that for almost every $x$, the $\sig$ order operators can be evaluated classically on $\Gamma$.	

\end{rem}
\begin{rem}
Proposition \ref{lem:GammaRegularityByAbove} would in fact be easier and stronger if the obstacle problem for $\Gamma$ were posed in all of $\real^n$ instead of $B_3$.  In that case, $\Gamma$ would be $C^{1,1}$ from above whenever $u$ was $C^{1,1}$ form above at well, and would follow from a straightforward adaptation of \cite[Proposition 3.10]{Silv-07} to the case of a concave nonlinear operator.
\end{rem}

\begin{proof}[Proof of Proposition \ref{lem:GammaRegularityByAbove}]
The proof is an almost immediate consequence of Proposition \ref{prop:BrezisKinderNonlocal}.  We simply note that the fact that $E_\sig(\Gamma,x)\geq0$ in $B_3$ implies that $h_\sig(\Gamma,x)\geq0$ in $B_3$.  Hence $h_\sig(\Gamma)$ has all non-negative eigenvalues and so $\Tr(h_\sig(\Gamma))$ bounds all of them individually.  Hence because $$\Tr(h_\sig(\Gamma))=-\Laplace{\sig/2}\Gamma,$$ the result (i) follows immediately from Proposition \ref{prop:BrezisKinderNonlocal}.  Result (ii) follows immediately from Corollary \ref{cor:AppendixAllOperators}.
\end{proof}

The following fact is a straightforward consequence of the definitions of $M^-$ and $E_\sig$, we state it as a lemma without proof.

\begin{lem}[Ordering of $E_\sig$ and $M^-$]\label{lem:EsigmaMminusOrder}
For any $v$ which satisfies 
$\ds \int_{\real^n}\frac{\abs{\del v(x,y)}}{\abs{y}^{n+\sig}}<\infty,$
\begin{equation}\label{eq:EsigmaLEQMminus}
E_\sig(v,x) \geq 0 \; \Rightarrow\;\; E_\sig(v,x)\leq \frac{C(n)}{\lam}M^-(v,x).
\end{equation}
\end{lem}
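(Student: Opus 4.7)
The plan is to recast both $E_\sig(v,x)$ and the linear operators appearing in the infimum defining $M^-(v,x)$ in terms of the same matrix-valued object, and then exploit the spectral inequality for traces of products of positive semidefinite matrices.

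First I would introduce the renormalized matrix
\[
\tilde h_\sig(v,x) := (2-\sig)\int_{\real^n}\frac{y\otimes y}{|y|^{n+\sig+2}}\,\del v(x,y)\,dy,
\]
which differs from $h_\sig(v,x)$ only by the positive dimensional scalar
\[
c(n,\sig) := \frac{(n+\sig-2)(n+\sig)}{2(2-\sig)}\,A(n,2-\sig),
\]
so that $h_\sig(v,x)=c(n,\sig)\,\tilde h_\sig(v,x)$. The key algebraic identity is that for every symmetric matrix $A$,
\[
(2-\sig)\int_{\real^n}\del v(x,y)\,\frac{y^T A y}{|y|^{n+\sig+2}}\,dy \;=\; \Tr\!\bigl(A\,\tilde h_\sig(v,x)\bigr),
\]
since $y^T A y = \Tr(A\,y\otimes y)$ and $\Tr$ commutes with the integral. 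This converts the entire family of integrals defining $M^-$ into a one-parameter family of traces, directly comparable to $E_\sig$.

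With this identity in hand, the hypothesis $E_\sig(v,x)\geq 0$ says the smallest eigenvalue of $h_\sig(v,x)$ is nonnegative, or equivalently $\tilde h_\sig(v,x)\geq \tfrac{E_\sig(v,x)}{c(n,\sig)}\,\Id$ as symmetric matrices. For any admissible $A$ (positive semidefinite, with $\lam\leq \Tr(A)$ and $A\leq\Lam\Id$), diagonalizing $A$ and summing the quadratic form $p_i^T\tilde h_\sig(v,x) p_i$ over its orthonormal eigenvectors $p_i$ with nonnegative eigenvalues $\mu_i$ gives the elementary inequality
\[
\Tr\!\bigl(A\,\tilde h_\sig(v,x)\bigr) \;=\; \sum_i \mu_i\, p_i^T \tilde h_\sig(v,x)\, p_i \;\geq\; \frac{E_\sig(v,x)}{c(n,\sig)}\,\Tr(A) \;\geq\; \frac{\lam}{c(n,\sig)}\,E_\sig(v,x).
\]
Taking the infimum over such $A$ yields $M^-(v,x)\geq \tfrac{\lam}{c(n,\sig)}\,E_\sig(v,x)$, which upon rearrangement is exactly the stated bound with $C(n)=c(n,\sig)$; this constant is bounded in $\sig\in(0,2)$ for each fixed $n\geq 2$ using the stated asymptotics $A(n,\alpha)\sim \alpha$ as $\alpha\to 0^+$.

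There is no serious obstacle in this argument; it is essentially a one-line linear algebra computation after the identification $L_A(v,x)=\Tr(A\,\tilde h_\sig(v,x))$. The only point worth explicitly flagging is the implicit positive semidefiniteness of $A$: the trace inequality above requires $A\geq 0$, and this is built into the setup because only $A\geq 0$ produces a nonnegative jump kernel $y^T A y/|y|^{n+\sig+2}$ and hence a legitimate member of the ellipticity class underlying the definition of $M^-$.
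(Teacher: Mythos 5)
Your proof is correct, and since the paper states this lemma explicitly without proof as ``a straightforward consequence of the definitions,'' your argument is exactly the routine verification being alluded to: writing each admissible linear operator as $\Tr(A\,\tilde h_\sig(v,x))$, using $h_\sig(v,x)\geq E_\sig(v,x)\,\Id$ together with $A\geq 0$ and $\Tr(A)\geq\lam$, and checking that the normalizing constant $\tfrac{(n+\sig-2)(n+\sig)}{2(2-\sig)}A(n,2-\sig)$ is bounded uniformly in $\sig\in(0,2)$. You correctly flag the one implicit point (that $A\geq 0$ is built into the ellipticity class), so there is nothing to add.
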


Exactly as with Lemma \ref{lem:EsigmaMminusOrder}, the following is a straightforward consequence of the definitions, and so we again omit the proof.
\begin{lem}\label{lem:GammaSignOnOperators}
If $u$ satisfies classically $-\Laplace{\sig/2}u\leq C$ in $B_1$, then for almost every $x\in B_3$, the operators on $\Gamma$ are classically defined and satisfy for all $A\geq0$:
\begin{equation*}
\int_{\real^n}\del \Gamma(x,y) \frac{y^TAy}{\abs{y}^{n+\sig+2}}dy\geq0.
\end{equation*}
\end{lem}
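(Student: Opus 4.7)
The plan is to recognize this as essentially a restatement of the fact that $h_\sigma(\Gamma,x)$ is a positive semidefinite matrix whenever $\Gamma$ satisfies the inequality $E_\sigma(\Gamma,x)\geq 0$, together with the trace-trick identity that rewrites the integral as a trace of a product of two positive semidefinite matrices.

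First, I would unpack the integral using the identity $y^T A y = \Tr(A \cdot y\otimes y)$. Pulling $A$ out of the integral (which is legitimate once integrability is established), one obtains
\[
\int_{\real^n}\del \Gamma(x,y) \frac{y^T A y}{\abs{y}^{n+\sig+2}}dy \;=\; \Tr\!\left( A \int_{\real^n}\del\Gamma(x,y)\frac{y\otimes y}{\abs{y}^{n+\sig+2}}dy\right),
\]
and by the definition of $h_\sig$ in Definition \ref{def:FracHessU}, the matrix on the right is simply a positive universal constant $c_{n,\sig}$ times $h_\sig(\Gamma,x)$. Thus the whole integral equals $c_{n,\sig}\,\Tr(A\cdot h_\sig(\Gamma,x))$.

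Next I would invoke Proposition \ref{lem:GammaRegularityByAbove}(i), whose hypothesis is exactly $-\Laplace{\sig/2}u\leq C$ in $B_1$: it yields that for almost every $x\in B_3$ the matrix $h_\sig(\Gamma,x)$ is classically defined and satisfies $0\leq h_\sig(\Gamma,x)\leq C_1 \Id$. In particular $h_\sig(\Gamma,x)$ is positive semidefinite, and, since $\int \abs{\del \Gamma(x,y)}\abs{y}^{-n-\sig}\,dy <\infty$ at such points (Corollary \ref{cor:AppendixAllOperators} or the bound on $\Tr(h_\sig(\Gamma,x))=-(-\Delta)^{\sig/2}\Gamma(x)$), the rearrangement above is justified: the singular kernel $y^TAy/\abs{y}^{n+\sig+2}$ is controlled pointwise by $\norm{A}/\abs{y}^{n+\sig}$.

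Finally, I would conclude using the elementary fact that for any two positive semidefinite matrices $A,B$ one has $\Tr(AB)\geq 0$ (write $A = S^2$ with $S$ symmetric, so $\Tr(AB) = \Tr(SBS) \geq 0$ since $SBS$ is positive semidefinite). Applying this with $B = h_\sig(\Gamma,x)$ gives the claimed non-negativity. There is no real obstacle here beyond bookkeeping: the nontrivial content (positive semidefiniteness of $h_\sig(\Gamma,x)$ and classical evaluability) has already been packaged into Proposition \ref{lem:GammaRegularityByAbove}, and the rest is the trace-trick together with the linear-algebra inequality $\Tr(AB)\geq 0$.
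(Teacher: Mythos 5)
Your proof is correct and is exactly the ``straightforward consequence of the definitions'' that the paper has in mind when it omits the argument: the integral is a positive constant times $\Tr(A\,h_\sig(\Gamma,x))$, Proposition \ref{lem:GammaRegularityByAbove}(i) gives $h_\sig(\Gamma,x)\geq 0$ classically a.e., and $\Tr(AB)\geq 0$ for positive semidefinite $A,B$ finishes it. (One small attribution point: the absolute integrability $\int\abs{\del\Gamma(x,y)}\abs{y}^{-n-\sig}dy<\infty$ under hypothesis (i) comes from Proposition \ref{prop:BrezisKinderNonlocal} rather than Corollary \ref{cor:AppendixAllOperators}, which requires the $C^{1,1}$-from-above hypothesis.)
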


To wrap up this section, we prove one last property of the envelope, $\Gamma$.  This is the stability of the contact sets with respect to an increasing family of obstacle functions, $u_\ep$.

\begin{lem}\label{lem:ContactSetsLimsup}
If $u_\ep\nearrow u$ as $\ep\searrow0$, then $\limsup K_\ep\subset K$, where $K_\ep=\{x:u_\ep(x)=\Gamma_\ep(x)\}$ and $K=\{x:u(x)=\Gamma(x)\}$ and $\Gamma_\ep$, $\Gamma$ are as in Definition \ref{def:GammaU}. 
\end{lem}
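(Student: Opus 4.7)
The approach rests entirely on two monotonicity observations coming directly from the obstacle problem in Definition~\ref{def:GammaU}. First, the map $\ep \mapsto \Gamma_\ep$ is order-reversing: if $0 < \ep' < \ep$ then $u_{\ep'} \geq u_\ep$, so any $v$ competing for $\Gamma_\ep$ (namely, $E_\sig(v) \geq 0$ in $B_3$ and $v \leq u_\ep \Indicator_{B_1}$) is automatically admissible for $\Gamma_{\ep'}$. Taking the supremum over the smaller class yields $\Gamma_\ep \leq \Gamma_{\ep'}$, and repeating the argument with $u$ in place of $u_{\ep'}$ gives $\Gamma_\ep \leq \Gamma$ for every $\ep > 0$. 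Second, $\Gamma \leq u$ pointwise on all of $\real^n$: inside $B_1$ this is the obstacle constraint; outside $B_1$ every competitor $v$ satisfies $v \leq u \Indicator_{B_1} = 0$, hence $\Gamma \leq 0 \leq u$, where the last inequality uses the standing sign hypothesis $u \geq 0$ on $\real^n \setminus B_1$ from~(\ref{eq:PIDEmain}).

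With these two facts in hand, the conclusion is almost immediate. Fix $x \in \limsup K_\ep$ and extract a subsequence $\ep_k \searrow 0$ with $u_{\ep_k}(x) = \Gamma_{\ep_k}(x)$. Chaining the inequalities above,
\[
u_{\ep_k}(x) \;=\; \Gamma_{\ep_k}(x) \;\leq\; \Gamma(x) \;\leq\; u(x) \qquad \text{for every } k.
\]
Since $u_\ep \nearrow u$ pointwise, $u_{\ep_k}(x) \to u(x)$, and letting $k \to \infty$ sandwiches $\Gamma(x)$ between $u(x)$ and $u(x)$. Therefore $\Gamma(x) = u(x)$ and $x \in K$.

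I expect no substantial obstacle: the argument is a soft continuity statement that avoids compactness, quantitative estimates, or any fine regularity of $E_\sig$. The only step that is not purely formal is the verification of $\Gamma \leq u$ outside $B_1$, which is settled by the boundary sign condition rather than by any property of the operator. It is worth emphasizing that the proof \emph{does not} require identifying the pointwise limit $\tilde{\Gamma} := \lim_{\ep \to 0} \Gamma_\ep$ with $\Gamma$—a much more delicate claim that would involve the stability of the obstacle problem itself—only the one-sided bound $\Gamma_\ep \leq \Gamma$, which is trivial.
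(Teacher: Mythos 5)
Your proof is correct and follows essentially the same route as the paper's: both establish the monotonicity $\Gamma_{\ep}\leq\Gamma$ from the admissibility of $\Gamma_{\ep}$ as a competitor for the larger obstacle, and then squeeze $\Gamma(x)$ between $u_{\ep_k}(x)=\Gamma_{\ep_k}(x)$ and $u(x)$ along the subsequence, using $u_{\ep_k}(x)\to u(x)$. The only cosmetic difference is that you spell out the verification of $\Gamma\leq u$ off $B_1$, which the paper takes for granted from the definition.
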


\begin{proof}[Proof of Lemma \ref{lem:ContactSetsLimsup}]
The increasing property of $u_\ep$ implies for $\ep_2<\ep_1$ that $\Gamma_{\ep_1}$ is an admissible subsolution below $u_{\ep_2}$, and hence because $\Gamma_{\ep_2}$ is the largest such subsolution, $\Gamma_{\ep_2}\geq \Gamma_{\ep_1}$.  Thus $\Gamma_{\ep}$ is also increasing, $\Gamma_\ep\nearrow\Gamma$.  Now suppose $x$ is a point in the set $\limsup_{\ep\to0} K_\ep$.  We can extract a subsequence indexed by $\ep_k$ such that $u_{\ep_k}(x)=\Gamma_{\ep_k}(x)$.

It must be shown that $\Gamma(x)=u(x)$.  We already know that $\Gamma(x)\leq u(x)$ by definition of $\Gamma$, so therefore we will show $\Gamma(x)-u(x)\geq0$:
\begin{align*}
0&\geq \Gamma(x)-u(x)\\
&\geq \Gamma(x) - \Gamma_{\ep_k}(x) +\Gamma_{\ep_k}(x)-u_{\ep_k}(x) + u_{\ep_k}(x)-u(x)\\
&\geq 0 + 0 + u_{\ep_k}(x)-u(x).
\end{align*}
Letting $k\to\infty$ ($\ep_k\to0$) yields the result.
\end{proof}

\section{The Fractional-Order Potential}\label{sec:Pressure}
\setcounter{equation}{0}

In the previous section we defined and presented some properties of an important envelope which will be used in the proof of Theorem \ref{thm:Main}.  However, there are still many difficulties to be overcome in proving an ABP type estimate for (\ref{eq:PIDEmain}), specifically one must relate the infimum of $\Gamma$ to an integral of a quantity which is comparable to $M^-(\Gamma)$.  In the second order setting (using a \emph{convex} $\Gamma$), this is resolved by the very special geometry of the convex envelope which gives
\begin{equation*}
-\inf\{\Gamma\}\leq C(n)\int_{B_3}\det(D^2\Gamma(x))dx.
\end{equation*}
A key feature in the proof is that we can transfer this argument to the Riesz potential of $\Gamma$, which will solve a second order equation.  Moreover, the usual argument will result in looking at the gradient measure, $\abs{\grad P(B_3)}$, which is a nonlocal quantity of $\Gamma$.  Inspiration for this choice came from the use of the Riesz potential in the setting of a nonlocal Porous Medium Equation treated in \cite{CaVa-10PorousFracPressure}. This method is also reminiscent of the regularity theory for the obstacle problem for the fractional Laplacian \cite{Silv-07}, which starts with the observation that the fractional Laplacian of the solution (of order $\sig$) solved an equation of order $2-\sig$.

As mentioned, we take our fractional-order potential to be the Riesz potential of the envelope (cf. \cite[Chapter 5, Section 1]{Stei-71} or \cite[Ch. 1, Section 1]{Land-72} ), which formally says

\begin{equation*}
P=(-\lap)^{-(2-\sig)/2} \Gamma.
\end{equation*}
Precisely we mean for this to be the convolution with the Riesz kernel, $K_\al:= A(n,\al) \abs{\cdot}^{-n+\al}$,
\begin{equation}\label{eq:Pressure}
P=\Gamma * K_\al,
\end{equation}
for $\al=(2-\sig)$, and the constant $A(n,\al)$ which is listed explicitly in section \ref{subsec:Notation}.  Hence 
\begin{equation*}
(-\lap) P=(-\lap)^{\sig/2}\Gamma\ \text{and}\ \Laplace{(2-\sig)/2}P=\Gamma.
\end{equation*}

The whole motivation of using the potential, $P$, of order $(2-\sig)/2$ is to make sure that the usual 2nd order Monge-Amp\`ere operator, $\det(D^2P)$, becomes a $\sig$-order operator when viewed as an operation on $\Gamma$.  This way, there is hope that $\det(D^2P)$ will be comparable to $M^-\Gamma$, and familiar techniques from the proof of the second order ABP estimate can be used (see \cite[Lemma 3.4]{CaCa-95} and/or \cite[Lemma 9.2]{GiTr-98}).  Indeed, the form of $\det(D^2P)$ is investigated in Section \ref{sec:DetD2P} and will be shown to be a $\sig$-order operator comparable to $M^-\Gamma$ (this new operator will not be an elliptic $\sig$-order operator in general, but we will be working with a convenient special case).  However, some additional difficulties are introduced by using $P$ instead of $\Gamma$ itself.  It will be necessary to compare the infimum of $P$ to the infimum of $\Gamma$ in a uniform fashion which does not depend on the continuity of $\Gamma$ or $u$.  This is not to be expected simply from the Riesz potential itself, but must be deduced from the equation satisfied by $\Gamma$.  The relationship between $P$ and $\Gamma$ is the main result of this section, which is presented in Proposition \ref{prop:PointToMeasureSIGMA}.  

To begin the results of this section, it will be helpful to remark upon the regularity of $P$ pertaining to that of $\Gamma$.  These results are presented in \cite{Silv-07}, and so we do not provide a proof.  They all follow from either of the two equations involving $P$:
\begin{equation*}
\Laplace{(2-\sig)/2}P=\Gamma
\end{equation*}
or
\begin{equation*}
(-\lap) P=(-\lap)^{\sig/2}\Gamma.
\end{equation*}

\begin{lem}[Proposition 2.9 of \cite{Silv-07}]\label{lem:PEstimates}
The regularity on $P$ inherited from $\Gamma$ is
\begin{itemize}
\item[i)] whenever $(2-\sig)\leq 1$, $P\in C^{0,\al}$ for any $\al<(2-\sig)$ and
\begin{equation*}
\norm{P}_{C^{0,\al}}\leq C(\norm{P}_\infty+\norm{\Gamma}_\infty),
\end{equation*}

\item[ii)] whenever $(2-\sig)>1$, $P\in C^{1,\al}$ for any $\al<(2-\sig)-1$ and
\begin{equation*}
\norm{P}_{C^{1,\al}}\leq C(\norm{P}_\infty+\norm{\Gamma}_\infty),
\end{equation*}
and
\item[iii)] $P\in C^{2,\gam}$ whenever $\Gamma\in C^{1,1}(\real^n)$  (only for $\sig<2$).
\end{itemize}
\end{lem}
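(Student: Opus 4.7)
The plan is to exploit the two complementary representations of $P$: the direct formula $P = \Gamma * K_{2-\sig}$ for parts (i) and (ii), and the dual equation $-\lap P = (-\lap)^{\sig/2}\Gamma$ for part (iii). Throughout, Lemma \ref{lem:GammaCompactSupport} ensures $\Gamma$ is compactly supported in $B_3$, and $\Gamma$ is bounded as an envelope of the $L^\infty$ function $u$, so all convolutions converge and one need not worry about behavior at infinity.

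For part (i), with $\sig \in [1,2)$ and hence $2-\sig \in (0,1]$, I would fix $x, x'$ with $h = |x-x'|$, write
\[
P(x) - P(x') = \int_{B_3} \Gamma(y)\bigl[K_{2-\sig}(x-y) - K_{2-\sig}(x'-y)\bigr]\,dy,
\]
and split the integration into the near set $N = \{y : \min(|x-y|,|x'-y|) \leq 2h\}$ and its complement. On $N$, bound each of the two kernel terms separately using the local integrability $\int_{B_{4h}} |z|^{-n+2-\sig}\,dz = Ch^{2-\sig}$. On $N^c$, the mean value theorem together with $|\grad K_{2-\sig}(z)| \leq C|z|^{-n+1-\sig}$ gives a far contribution controlled by $h \int_{2h}^{C} r^{-\sig}\,dr$, which is $O(h^{2-\sig})$ (up to a logarithm in the borderline case $\sig=1$). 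Multiplying by $\norm{\Gamma}_{L^\infty}$ gives $|P(x)-P(x')| \leq C\norm{\Gamma}_{L^\infty} h^{2-\sig}$, which supplies $C^{0,\alpha}$ for every $\alpha < 2-\sig$.

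For part (ii), with $\sig \in (0,1)$, the gradient kernel $\grad K_{2-\sig}(z) \sim |z|^{-n+(1-\sig)}$ is locally integrable since $1-\sig > 0$, so one may differentiate under the integral to obtain $\grad P = \Gamma * \grad K_{2-\sig}$. Each component is a Riesz-type convolution of $\Gamma$ against a kernel homogeneous of degree $-n+(1-\sig)$, namely a Riesz potential of order $1-\sig \in (0,1)$. Applying the same near/far splitting as in (i) now to $\grad P$ gives $\grad P \in C^{0,\alpha}$ for every $\alpha < 1-\sig = 2-\sig-1$, which is exactly the claimed $C^{1,\alpha}$ regularity of $P$.

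Part (iii) is the most delicate step. Using $-\lap P = (-\lap)^{\sig/2}\Gamma$, the plan is first to show that $(-\lap)^{\sig/2}\Gamma$ is H\"older continuous whenever $\Gamma \in C^{1,1}(\real^n)$, after which classical interior Schauder estimates for the Laplacian produce $P \in C^{2,\gam}_{\text{loc}}$. Writing
\[
(-\lap)^{\sig/2}\Gamma(x) = c_{n,\sig}\int \del\Gamma(x,y)\,|y|^{-n-\sig}\,dy
\]
and comparing values at $x$ and $x'$ with $h=|x-x'|$, I would split the $y$-integration at $|y|=h$. The inner piece uses $|\del\Gamma(x,y)| \leq C|y|^2$ (from $C^{1,1}$) to produce a contribution of order $h^{2-\sig}$. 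The outer piece uses $|\del\Gamma(x,y) - \del\Gamma(x',y)| \leq Ch|y|$ on the range $h < |y| \leq 1$, deduced from Lipschitz continuity of $\grad\Gamma$ via $\grad_x \del\Gamma(\cdot,y) = \del(\grad\Gamma)(\cdot,y)$, giving a contribution of order $h\int_h^1 r^{-\sig}\,dr$; the very-far piece $|y|>1$ is controlled using compact support and boundedness of $\Gamma$. Collecting terms gives $(-\lap)^{\sig/2}\Gamma \in C^{0,\gam}$ with norm bounded by $\norm{\Gamma}_{C^{1,1}}$ for some $\gam=\gam(\sig)>0$. The main obstacle is tracking the sharp $\gam$ uniformly across all $\sig\in(0,2)$, but since the statement only asserts existence of some $\gam>0$, the splitting above suffices.
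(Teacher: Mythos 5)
Your proof is correct, and it follows exactly the route the paper indicates for this lemma (which it does not prove itself but delegates to Proposition 2.9 of \cite{Silv-07}): parts (i) and (ii) by the standard near/far splitting for the convolution $P=\Gamma * K_{2-\sig}$ with $\Gamma$ bounded and compactly supported, and part (iii) by H\"older continuity of $(-\lap)^{\sig/2}\Gamma$ for $\Gamma\in C^{1,1}$ followed by interior Schauder for $-\lap P=(-\lap)^{\sig/2}\Gamma$. The only point worth making explicit is that in (iii) the tail $\abs{y}>1$ should be handled with the global Lipschitz bound on $\Gamma$ (which follows from $C^{1,1}(\real^n)$ plus compact support), since the estimate $\abs{\del\Gamma(x,y)-\del\Gamma(x',y)}\leq Ch\abs{y}$ is not integrable against $\abs{y}^{-n-\sig}$ at infinity when $\sig\leq 1$.
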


There are two more very useful consequences which follow simply from the equation satisfied by $P$.  They will play a significant role later in both Proposition \ref{prop:PointToMeasureSIGMA} and the proof of Theorem \ref{thm:Main}.

\begin{lem}\label{lem:PCantTouchPlane}
If $x_0\in B_3^c$ and $l$ is a linear function, then $P-l$ cannot attain a local minimum at $x_0$.  
\end{lem}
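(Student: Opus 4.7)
The plan is to show that $P$ is strictly superharmonic on the open set $\real^n\setminus\bar{B_3}$, and then to rule out an interior local minimum of $P-l$ by the standard second-derivative test.

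I would begin with two preliminary observations about $\Gamma=\Gamma^\sigma_u$. First, since $u\le 0$ in $B_1$ and every admissible competitor $v$ in Definition \ref{def:GammaU} must satisfy $v\le u\Indicator_{B_1}$, one has $\Gamma\le 0$ throughout $\real^n$. Second, by Lemma \ref{lem:GammaCompactSupport}, $\Gamma\equiv 0$ outside $B_3$. We may assume the nontrivial case $\Gamma\not\equiv 0$, since otherwise $P\equiv 0$ and the conclusion is immediate for any non-constant $l$.

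Next I would evaluate $(-\Delta)P$ at a generic $x\in\real^n\setminus\bar{B_3}$ via the identity $(-\Delta)P=\Laplace{\sigma/2}\Gamma$ stated at the opening of this section, together with the singular-integral representation of $-\Laplace{\sigma/2}\Gamma$ recorded in Section \ref{subsec:Notation}. Because $\Gamma$ vanishes in a neighborhood of $x$, the second difference reduces to $\delta\Gamma(x,y)=\Gamma(x+y)+\Gamma(x-y)\le 0$ for every $y$, with strict inequality on a set of positive measure (namely those $y$ for which $x+y$ or $x-y$ lies in $\{\Gamma<0\}\subset B_3$). Multiplying by the positive weight $\abs{y}^{-n-\sigma}$ and integrating gives $-\Laplace{\sigma/2}\Gamma(x)<0$, hence $\Laplace{\sigma/2}\Gamma(x)>0$, hence $-\Delta P(x)>0$. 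Equivalently, one can differentiate the convolution $P=\Gamma*K_{2-\sigma}$ twice under the integral sign (justified because $K_{2-\sigma}$ is smooth away from the origin and $\Gamma$ has compact support) and use the explicit computation $\Delta K_{2-\sigma}(z)=\sigma(n+\sigma-2)A(n,2-\sigma)\abs{z}^{-n-\sigma}>0$ for $z\neq 0$. Either way, $P$ is strictly superharmonic on $\real^n\setminus\bar{B_3}$.

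Finally, if $P-l$ had a local minimum at some $x_0\in B_3^c$ lying in the interior of this open set, the necessary second-order condition would force $\Delta P(x_0)=\Delta(P-l)(x_0)+\Delta l(x_0)\ge 0$ (since $\Delta l\equiv 0$), contradicting the strict inequality $\Delta P(x_0)<0$ just established. The boundary case $x_0\in\partial B_3$—where $P$ need not be $C^2$—is the main technical obstacle and the only delicate point; it can be handled either by perturbing $l$ into $l+\varepsilon\abs{x-x_0}^2$ to push any would-be boundary minimum strictly into the open set $\real^n\setminus\bar{B_3}$ and reducing to the interior case just treated, or by invoking the strong minimum principle for the superharmonic function $P-l$ on $\real^n\setminus\bar{B_3}$, which rules out any local minimum unless $P-l$ is locally constant, in contradiction with strict superharmonicity.
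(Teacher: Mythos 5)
Your argument is the paper's argument: from $\Delta P=-\Laplace{\sigma/2}\Gamma$ and the facts $\Gamma(x_0)=0$, $\Gamma\le 0$, $\Gamma\not\equiv 0$, one gets $\del\Gamma(x_0,y)=\Gamma(x_0+y)+\Gamma(x_0-y)\le 0$ with strict inequality on a set of positive measure, hence $\Delta P(x_0)<0$, which is incompatible with the affine function $l$ touching $P$ from below at $x_0$. The one place you deviate is in isolating $x_0\in\partial B_3$ as a separate delicate case, and there your two patches are the weak link: the strong minimum principle for $P-l$ on the open set $\real^n\setminus\overline{B_3}$ says nothing about a minimum attained at a point of $\partial B_3$, which is not interior to that set (and on the other side $P$ is actually \emph{sub}harmonic, since $E_\sigma(\Gamma)\ge 0$ gives $\Tr(h_\sigma(\Gamma))=-\Laplace{\sigma/2}\Gamma\ge 0$ in $B_3$, so superharmonicity does not extend across the boundary); and replacing $l$ by $l+\ep\abs{x-x_0}^2$ destroys linearity and does not visibly move a would-be minimum off $\partial B_3$. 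Fortunately no separate case is needed: the sign computation for $\del\Gamma(x_0,\cdot)$ uses only $\Gamma(x_0)=0$ and $\Gamma\le 0$, so $-\Laplace{\sigma/2}\Gamma(x_0)<0$ holds at \emph{every} $x_0\in B_3^c$, boundary included, and the contradiction follows by testing the equation $\Delta P=-\Laplace{\sigma/2}\Gamma$ in the viscosity sense at $x_0$ with $l$ as test function --- which is exactly the ``comparison for $\Delta P$ and $\Delta l$'' the paper invokes.
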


\begin{proof}[Proof of Lemma \ref{lem:PCantTouchPlane}]
As noted above, $P$ globally solves the equation 
\begin{equation*}
\Delta P = -\Laplace{\sig/2}\Gamma.
\end{equation*}
Furthermore, due to the fact that $\Gamma$ satisfies the Dirichlet problem, (\ref{eq:AppendixObstacleViscosityEq}), we know $\Gamma(x_0)=0$ and without loss of generality $\Gamma<0$ in $B_3$, we also have that 
\begin{equation*}
\Delta P(x) = -\Laplace{\sig/2}\Gamma(x) < 0.
\end{equation*}
Thus, by comparison for $\Delta P$ and $\Delta l$, it is impossible for $P-l$ to have a local minimum at $x_0$.
\end{proof}

\begin{cor}\label{cor:InfPB3}
If $x_0$ satisfies $P(x_0)=\inf\limits_{\real^n}\{P\}$, the $x_0\in \overline{B_3}$.
\end{cor}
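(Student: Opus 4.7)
The plan is to invoke Lemma~\ref{lem:PCantTouchPlane} with the trivial linear function $l \equiv 0$. If $x_0$ is a point realizing the global infimum of $P$ over $\real^n$, then in particular $P - l = P$ has a local minimum at $x_0$. Lemma~\ref{lem:PCantTouchPlane} rules out this possibility for any $x_0 \in B_3^c$, forcing $x_0 \in \overline{B_3}$.

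Before applying this contradiction argument I would first check that the infimum is actually attained at some point of $\real^n$. This follows from two facts already available: continuity of $P$ (Lemma~\ref{lem:PEstimates} gives at least $C^{0,\alpha}$ regularity since $\Gamma\in L^\infty$), and decay at infinity of the Riesz potential $P = \Gamma * K_{2-\sigma}$ of the compactly supported function $\Gamma$ (by Lemma~\ref{lem:GammaCompactSupport}, $\Gamma$ is supported in $\overline{B_3}$). Since $\Gamma \leq 0$ and $\Gamma$ is strictly negative on a set of positive measure in $B_3$, one has $P < 0$ somewhere while $P(x) \to 0$ as $|x|\to\infty$; hence a continuous function that is bounded above by $0$ and tends to $0$ at infinity attains its infimum at some point $x_0 \in \real^n$.

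With attainment in hand, the remainder is immediate: suppose for contradiction that $x_0 \in B_3^c$. Then $x_0$ is in particular a local minimum of the function $P - 0$, with $0$ being a (degenerate) linear function. Lemma~\ref{lem:PCantTouchPlane} applied with $l \equiv 0$ says this cannot happen, so $x_0 \notin B_3^c$, i.e. $x_0 \in \overline{B_3}$.

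There is no real obstacle here; the corollary is essentially a direct application of the lemma. The only point that deserves a bit of care is the attainment of the infimum, which one might want to state explicitly so that the argument is not vacuous. One could alternatively argue via a minimizing sequence $\{x_k\}$: by the decay of $P$ at infinity the sequence must stay bounded, pass to a convergent subsequence $x_k \to x_0$, use continuity of $P$ to conclude $P(x_0) = \inf P$, and then proceed as above.
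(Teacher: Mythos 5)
Your proof is correct and follows exactly the route the paper intends: the corollary is stated as an immediate consequence of Lemma~\ref{lem:PCantTouchPlane} applied with $l\equiv 0$, since a global minimum is in particular a local minimum. Your additional verification that the infimum is attained (continuity of $P$ plus decay of the Riesz potential of the compactly supported $\Gamma$) is a reasonable point of care that the paper leaves implicit, but it does not change the argument.
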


\begin{cor}\label{cor:D2PNonNegSet}
If $A\subset\real^n$ is the set where $P$ has a second order Taylor Expansion, then
\[
\{x\in A\ :\ D^2P(x)\geq0\} \subset B_3(0).
\]
\end{cor}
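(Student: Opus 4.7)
The plan is to combine the pointwise PDE $\Delta P = -\Laplace{\sig/2}\Gamma$ outside of $B_3$ with the elementary linear-algebra fact that a positive semidefinite matrix has non-negative trace. This is essentially Lemma \ref{lem:PCantTouchPlane} sharpened by reading off the Laplacian directly from the Taylor polynomial instead of comparing against an affine $l$.

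First I would verify that the identity $\Delta P(x) = -\Laplace{\sig/2}\Gamma(x)$ holds classically for every $x_0 \in B_3^c$. Since $\Gamma$ is supported in $\overline{B_3}$ by Lemma \ref{lem:GammaCompactSupport}, the convolution $P(x) = \int K_{2-\sig}(x-y)\Gamma(y)\,dy$ is $C^\infty$ on $B_3^c$ by differentiation under the integral sign, so $\Delta P(x_0)$ is defined classically. Using that $\Gamma(x_0)=0$ for $x_0 \in B_3^c$ and that $\Gamma \leq 0$ everywhere (from $\Gamma \leq u\Indicator_{B_1}$ together with $u \leq 0$ in $B_1$), I then read off
\begin{equation*}
\Delta P(x_0) \;=\; -\Laplace{\sig/2}\Gamma(x_0) \;=\; c_{n,\sig} \int_{\real^n}\frac{\Gamma(x_0+y)+\Gamma(x_0-y)}{|y|^{n+\sig}}\,dy \;\leq\; 0,
\end{equation*}
with strict inequality except in the trivial case $u \equiv 0$ in $B_1$, in which case Theorem \ref{thm:Main} is vacuous and the corollary may be ignored.

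To finish, I would suppose toward contradiction that there is some $x_0 \in A$ with $x_0 \notin B_3$ and $D^2P(x_0) \geq 0$. Since $P$ admits a second-order Taylor expansion at $x_0$, the classical Laplacian of $P$ at $x_0$ coincides with $\Tr(D^2P(x_0))$, and positive semidefiniteness forces $\Delta P(x_0) = \Tr(D^2P(x_0)) \geq 0$, contradicting the strict inequality from the previous paragraph. There is no genuine obstacle here; the only point requiring care is justifying the pointwise PDE on $B_3^c$, which is immediate from the compact support of $\Gamma$ and smoothness of the Riesz kernel $K_{2-\sig}$ away from the origin.
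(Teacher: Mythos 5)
Your proof is correct and takes essentially the same route as the paper: the corollary is just the trace form of the strict sign $\Delta P=-\Laplace{\sig/2}\Gamma<0$ on $B_3^c$, which is exactly what the proof of Lemma \ref{lem:PCantTouchPlane} establishes (using $\Gamma=0$ outside $B_3$ and $\Gamma\leq 0$, non-trivial, inside). Your explicit exclusion of the degenerate case $u\equiv 0$ matches the paper's implicit ``without loss of generality $\Gamma<0$ in $B_3$.''
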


Now we proceed with the very interesting relationship between $\inf\{P\}$ and $\inf\{\Gamma\}$ which is imposed by the equation governing $\Gamma$ and will be developed as Lemma \ref{lem:GoodRing} and Proposition and \ref{prop:PointToMeasureSIGMA}.  Lemma \ref{lem:GoodRing} is an example of the type of information that is available when analyzing nonlocal equations which is not available in the local setting.  Basically this next lemma says that the nonlocal equation prevents, in a uniform fashion, $\Gamma$ (and also $u$) from growing away from its minimum faster that $\abs{y}^{\sig}$ in a large set.

\begin{lem}\label{lem:GoodRing}
Let $0<\sig<2$ and $x_0$ be such that
$\Gamma(x_0)=\inf_{\real^n}\{\Gamma\}$.  Define the set
\begin{equation}\label{eq:GoodSetA}
A_{x_0}:= \{y: \Gamma(x_0+y)-\Gamma(x_0)\leq f(x_0)\abs{y}^{\sig}\}
\end{equation}
and rings with radius $r_k=\rho_02^{-k}$, $R_k=B_{r_k}(x_0)\setminus
B_{r_{k+1}}$ (where $\rho_0$ will be chosen later for the proof of
Proposition \ref{prop:PointToMeasureSIGMA}).  If the indices, $k^*$,
indicate bad rings for which
\begin{equation}\label{eq:BadRing}
\abs{A^c_{x_0}\intersect R_{k^*}}\geq \frac{1}{2}\abs{R_{k^*}},
\end{equation}
then the number of such indices satisfies
\begin{equation*}
(\# k^*)\leq \frac{2}{(2-\sig)\lam}C_n
\end{equation*}

\end{lem}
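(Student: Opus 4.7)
Plan. The key observation I would use is that at a minimum point $x_0$ of $\Gamma$, every second difference $\delta\Gamma(x_0, y) = \Gamma(x_0+y) + \Gamma(x_0-y) - 2\Gamma(x_0)$ is nonnegative. Consequently the integrand in the definition (\ref{eq:MminusDef}) of $M^-$ is pointwise nonnegative and linear in $A$, so the infimum over admissible matrices is attained at a rank-one matrix $A^* = \lambda\, \tau^* \otimes \tau^*$ for some direction $\tau^* \in S^{n-1}$, giving
\[
M^-(\Gamma, x_0) = (2-\sigma)\lambda \int_{\real^n} \delta\Gamma(x_0, y)\,\frac{(y\cdot\tau^*)^2}{|y|^{n+\sigma+2}}\,dy.
\]
A short admissibility check (the constant function $\inf u$ is admissible in the definition of $\Gamma$) shows $\inf\Gamma = \inf u$, so I may select $x_0$ to also be a minimum of $u$; then $x_0 \in K_u$, and testing the viscosity supersolution $u$ from below with $\Gamma$ yields $M^-(\Gamma, x_0) \leq f(x_0)$.

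Next I would show that every bad ring $R_{k^*}$ contributes a definite amount to this integral. On $A^c_{x_0} \cap R_{k^*}$, the definition of $A^c_{x_0}$ together with the minimum property $\Gamma(x_0 - y) \geq \Gamma(x_0)$ gives $\delta\Gamma(x_0, y) > f(x_0)|y|^\sigma$. A purely geometric pigeonhole picks a universal $c_n > 0$ (depending only on $n$) such that for every unit vector $\tau^*$ the double cone $G = \{y : (y\cdot\tau^*)^2 \geq c_n |y|^2\}$ covers at least $3/4$ of the solid angle of $S^{n-1}$. Combined with $|A^c_{x_0} \cap R_{k^*}| \geq |R_{k^*}|/2$, the intersection $E_{k^*} := A^c_{x_0} \cap R_{k^*} \cap G$ has volume at least $|R_{k^*}|/4$. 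On $E_{k^*}$ the integrand is bounded below by $c_n f(x_0)/|y|^n$, and a direct dyadic scaling using $|y| \leq r_{k^*}$ yields $\int_{E_{k^*}} \geq C_n f(x_0)$ with $C_n$ depending only on $n$ (the $r_{k^*}^n$ factors cancel).

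Since distinct bad rings produce disjoint $E_{k^*}$, summing over all $N$ bad indices gives
\[
N\, C_n f(x_0) \leq \int_{\real^n}\delta\Gamma(x_0, y) \frac{(y\cdot\tau^*)^2}{|y|^{n+\sigma+2}}\, dy = \frac{M^-(\Gamma, x_0)}{(2-\sigma)\lambda} \leq \frac{f(x_0)}{(2-\sigma)\lambda},
\]
which yields the claimed bound after dividing by $f(x_0) > 0$. The case $f(x_0) = 0$ is trivial: $M^-(\Gamma, x_0) \leq 0$ and the nonnegativity of the integrand force $\delta\Gamma(x_0, y)(y\cdot\tau^*)^2 = 0$ a.e., which (since the hyperplane $y\cdot\tau^* = 0$ has measure zero) precludes any bad ring.

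The main technical obstacle is the legitimate evaluation of $M^-(\Gamma, x_0) \leq f(x_0)$ at the single point $x_0$: $\Gamma$ is a priori only in $H^\sigma$ locally by Proposition \ref{lem:GammaRegularityByAbove}, so one must either invoke the test-function definition of viscosity solution directly, or first regularize $u$ via inf-convolutions so that Proposition \ref{lem:GammaRegularityByAbove}(ii) gives $\Gamma$ enough pointwise $C^{1,1}$-from-above regularity to evaluate the integral classically, and then pass to the limit using Lemma \ref{lem:ContactSetsLimsup}.
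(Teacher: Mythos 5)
Your argument is correct and follows essentially the same route as the paper's proof: evaluate the supersolution inequality at the minimum point with $\Gamma$ as test function, use $\delta\Gamma(x_0,\cdot)\geq 0$ to reduce $M^-$ to a single rank-one direction, and show each bad ring contributes a fixed dimensional multiple of $f(x_0)$ to the integral via the definition of $A^c_{x_0}$ and a cone/measure pigeonhole. You in fact supply two details the paper leaves implicit: the pigeonhole behind its claim that $\int_{A^c_{x_0}\cap R_{k^*}}(y\cdot\tau^*)^2|y|^{-n-2}\,dy\geq c_n$ "can be checked easily," and the degenerate case $f(x_0)=0$, where the paper's final division by $f(x_0)$ is vacuous.
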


\begin{proof}[Proof of Lemma \ref{lem:GoodRing}]
Because $x_0$ is a location of a global minimum of $\Gamma$ which is
also a contact point with $u$, we know that $M^-(\Gamma,x_0)\leq
f(x_0)$ by comparison with $u$.  Furthermore, there are two things which result from
$\del\Gamma(x_0,y)\geq0$: one, all of the operators appearing in the
definition of $M^-$ (equation (\ref{eq:MminusDef})) are bounded below
by the one corresponding to a matrix $(\lam/n) e \otimes e$ (for some unit vector $e$), and two, we can neglect as
much of the set of integration as we like in the evaluation of
$M^-(\Gamma,x_0)$. Without loss of generality, we may assume $e$ points in the positive direction of the $x_1$-axis.

Therefore, evaluating (\ref{eq:PIDEmain}) at $x_0$ and estimating as
suggested above, we have

\begin{align*}
&f(x_0) \geq M^-(\Gamma,x_0) \\
&\geq \lam\int_{\real^n}
(2-\sig)\del\Gamma(x_0,y)y_1^2\abs{y}^{-n-\sig+2}dy\\
&= \lam\int_{\real^n}
(2-\sig)(\Gamma(x_0+y)-\Gamma(x_0))y_1^2\abs{y}^{-n-\sig-2}dy +
\lam\int_{\real^n}
(2-\sig)(\Gamma(x_0-y)-\Gamma(x_0))y_1^2\abs{y}^{-n-\sig-2}dy\\
&\geq \lam\int_{A^c_{x_0}}
(2-\sig)(\Gamma(x_0+y)-\Gamma(x_0))y_1^2\abs{y}^{-n-\sig-2}dy\\
&\geq \lam\sum_{k^*} \int_{A^c_{x_0}\intersect R_{k^*}}
(2-\sig)(\Gamma(x_0+y)-\Gamma(x_0))y_1^2\abs{y}^{-n-\sig-2}dy\\
&\geq \lam\sum_{k^*} \int_{A^c_{x_0}\intersect R_{k*}}
(2-\sig)f(x_0)\abs{y}^{\sig} y_1^2\abs{y}^{-n-\sig-2}dy.
\end{align*}

Since $\abs{A^c_{x_0}\intersect R_{k^*} }\geq |R_k|/2$, it can be checked easily that
\begin{equation*}
\int_{A^c_{x_0}\intersect R_{k^*}}y_1^2|y|^{-n-2}dy	\geq c_n\;\;\forall k^*
\end{equation*}
which together with the previous inequality means that
\begin{align*}
f(x_0)&\geq \lam f(x_0)\sum_{k^*}
(2-\sig) c_n\abs{A^c_{x_0}\intersect R_{k^*} }
r_{k^*+1}^{-n+\sig-\sig} \\
&\geq \lam f(x_0)\sum_{k^*+1}
(2-\sig)c_n\om(n)(1-2^{-n})\\
&\geq \lam f(x_0)(2-\sig)\frac{1}{2}\lam c_n\om_n(\#k^*)
\end{align*}

Therefore, we conclude
\begin{equation*}
(\# k^*)\leq
\frac{2}{(2-\sig)\lam}C_n
\end{equation*}

\end{proof}

\begin{rem}
It is worth comparing Lemma \ref{lem:GoodRing} to \cite[Lemma 8.1]{CaSi-09RegularityIntegroDiff}, where a decay rate of $\Gamma$ away from its supporting hyperplane of $r^2$ was used instead of the $r^\sig$ used here.  Using $r^\sig$ seems to give more precise information for which rings the decay estimate can fail.
\end{rem}

\begin{prop}\label{prop:PointToMeasureSIGMA}
Let $x_0$ be such that $\Gamma(x_0)=\inf_{\real^n}\{\Gamma\}$ and
$\sig\in(0,2)$.  Then
\begin{equation*}
-\inf_{\real^n} \{P\}=-\inf_{B_3}\{P\}\geq
C(n,\lambda)(-\Gamma(x_0))^{2/\sig}\left(\frac{1}{2f(x_0)}\right)^{(2-\sig)/\sig}.
\end{equation*}
\end{prop}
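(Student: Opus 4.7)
The equality $\inf_{\real^n}\{P\}=\inf_{B_3}\{P\}$ is immediate from Corollary \ref{cor:InfPB3}, so the plan focuses on the lower bound on $-P(x_0)$, which trivially bounds $-\inf_{\real^n}\{P\}$ from below. Since $\Gamma\leq 0$ everywhere and $K_\alpha\geq 0$, I would write
\[
-P(x_0)=A(n,\alpha)\int_{\real^n}\frac{-\Gamma(x_0+y)}{|y|^{n-\alpha}}\,dy,\qquad \alpha=2-\sigma,
\]
and look for a region of $y$'s on which $-\Gamma(x_0+y)$ is uniformly comparable to $-\Gamma(x_0)$. Since $x_0$ is a global minimizer, $-\Gamma(x_0+y)\leq -\Gamma(x_0)$; the point is to get a matching lower bound on a set of controlled size.

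The key input is Lemma \ref{lem:GoodRing}: in the good set $A_{x_0}$, one has $\Gamma(x_0+y)-\Gamma(x_0)\leq f(x_0)|y|^\sigma$. To turn this into the pointwise bound $-\Gamma(x_0+y)\geq \tfrac12(-\Gamma(x_0))$, I choose the radius in the lemma so that $f(x_0)|y|^\sigma\leq \tfrac12(-\Gamma(x_0))$ throughout, that is
\[
\rho_0=\left(\frac{-\Gamma(x_0)}{2f(x_0)}\right)^{1/\sigma}.
\]
With this $\rho_0$ and the annuli $R_k=B_{r_k}(x_0)\setminus B_{r_{k+1}}(x_0)$, $r_k=\rho_0 2^{-k}$, Lemma \ref{lem:GoodRing} guarantees that the number of bad rings $k$ (those with $|A_{x_0}^c\cap R_k|\geq \tfrac12|R_k|$) is bounded by some $N=N(n,\lambda,\sigma)$. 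Consequently the good rings (where $|A_{x_0}\cap R_k|\geq \tfrac12|R_k|$) include at least one index in any consecutive block of length $N+1$, and in fact infinitely many such indices exist.

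On each good ring, $|y|\asymp r_k$ and $|A_{x_0}\cap R_k|\gtrsim r_k^n$, so
\[
\int_{A_{x_0}\cap R_k}\frac{-\Gamma(x_0+y)}{|y|^{n-\alpha}}\,dy\;\geq\;c_n\cdot\frac{-\Gamma(x_0)}{2}\cdot r_k^\alpha.
\]
Summing over the good indices inside $\{0,1,\dots,2N\}$ (there are at least $N+1$ of them, so in the worst case they all lie in $\{N,\dots,2N\}$) gives a lower bound of the form
\[
-P(x_0)\;\geq\;C(n,\lambda,\sigma)\,(-\Gamma(x_0))\,\rho_0^{2-\sigma}.
\]
Substituting $\rho_0$ and using $1+(2-\sigma)/\sigma=2/\sigma$ yields exactly
\[
-P(x_0)\;\geq\;C(n,\lambda)\,(-\Gamma(x_0))^{2/\sigma}\left(\frac{1}{2f(x_0)}\right)^{(2-\sigma)/\sigma},
\]
which is the stated bound.

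The main obstacle is the bookkeeping of the bad rings: since Lemma \ref{lem:GoodRing} controls only the \emph{number} of bad indices, not \emph{which} ones are bad, one must extract the good contribution in a way that survives the worst-case placement. This is handled by the elementary observation above (at least one good index in every block of length $N+1$), but one should track that the resulting constant $C(n,\lambda,\sigma)$ has the claimed form $C(n,\lambda)$, which requires absorbing the $\sigma$-dependence from $N\sim (2-\sigma)^{-1}$ and from the factor $A(n,2-\sigma)\sim (2-\sigma)$; these two essentially compensate, leaving a constant depending only on $n$ and $\lambda$.
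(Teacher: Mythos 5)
Your proposal is correct and follows essentially the same route as the paper: the same choice of $\rho_0=\left(\frac{-\Gamma(x_0)}{2f(x_0)}\right)^{1/\sigma}$, the same use of Lemma \ref{lem:GoodRing} on the dyadic rings, and the same cancellation of the $\sigma$-dependence between $N\sim(2-\sigma)^{-1}$ and $A(n,2-\sigma)\sim(2-\sigma)$. The only (immaterial) difference is bookkeeping: you sum over the at least $N+1$ good rings in a finite block $\{0,\dots,2N\}$, whereas the paper bounds $\sum_{k_g}r_{k_g}^{2-\sigma}$ below by the full geometric tail $\sum_{k\geq N}r_k^{2-\sigma}$.
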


\begin{proof}[Proof of Proposition \ref{prop:PointToMeasureSIGMA}]
First we note the equality between the infimum of $P$ in all of $\real^n$ and in $B_3$ comes from Corollary \ref{cor:InfPB3}. 
Once we choose an appropriate $\rho_0$ from Lemma \ref{lem:GoodRing},
this proposition will follow directly from the definition of $P$.  We
would like to restrict the beginning radius, $\rho_0$, so that
whenever $y\in A_{x_0}\intersect R_{\rho_0}$, $y$ also satisfies
(recall $\Gamma$ is always negative)
\begin{equation*}
\Gamma(x_0+y)\leq \frac{1}{2}\Gamma(x_0).
\end{equation*}
This is guaranteed if we let 
\begin{equation*}
\rho_0
=\left(\frac{-\Gamma(x_0)}{2f(x_0)}\right)^{1/\sig},
\end{equation*}
which gives for
$\abs{y}\leq \rho_0$ and $y\in A_{x_0}$
\begin{equation*}
\Gamma(x_0+y)-\Gamma(x_0)\leq f(x_0)\abs{y}^{\sig} \leq \frac{-1}{2}\Gamma(x_0).
\end{equation*}
Also in the estimates to follow, it will be very useful to observe
that if $\{k_g\}$ are the collection of indices resulting in good rings
satisfying
\begin{equation*}
\abs{A_{x_0}\intersect R_k}\geq \frac{1}{2}\abs{R_k},
\end{equation*}
and $N(x_0)$ is the upper bound on the bad indices, $(\#k^*)$, where
the collection $\{k^*\}$ are the ones satisfying  (\ref{eq:BadRing}),
then
\begin{equation*}
\sum_{k_g} r_{k_g}^{2-\sig}\geq \sum_{k\geq N(x_0)} r_k^{2-\sig}.
\end{equation*}
Recall also that
\begin{equation*}
N(x_0)= \frac{2}{(2-\sig)\lam\omega_n}.
\end{equation*}

Now we have
\begin{align*}
-P(x_0) &= A(n,2-\sig)\int_{\real^n}-\Gamma(x_0+y)\abs{y}^{-n+2-\sig}dy\\
&\geq A(n,2-\sig)\sum_{k_g}\int_{A_{x_0}\intersect
R_{k_g}}-\frac{1}{2}\Gamma(x_0)\abs{y}^{-n+2-\sig}dy\\
&\geq A(n,2-\sig)(-\frac{1}{2}\Gamma(x_0))\sum_{k_g}\abs{A\intersect
R_{k_g}}r_{k_g+1}^{-n+2-\sig}\\
&\geq A(n,2-\sig)(-\Gamma(x_0))C(n)\sum_{k_g}r_{k_g}^{2-\sig}\\
&\geq A(n,2-\sig)(-\Gamma(x_0))C(n)\sum_{k\geq N(x_0)}r_{k}^{2-\sig}\\
&= A(n,2-\sig)(-\Gamma(x_0))C(n)\rho_0^{2-\sig}\sum_{k\geq
N(x_0)}2^{-k(2-\sig)}\\
&= A(n,2-\sig)(-\Gamma(x_0))C(n)\rho_0^{2-\sig}(2^{-(2-\sig)})^{N(x_0)}\sum_{k\geq0}
(2^{-(2-\sig)})^k\\
&\geq A(n,2-\sig)(-\Gamma(x_0))C(n)\rho_0^{2-\sig}(2^{-2/(\lam
\omega_n)})(1-2^{-(2-\sig)})^{-1}.
\end{align*}
In the calculations above, the ``constant'', $C(n)$, was used multiple
times as different numbers to absorb any extra constants which only
depended on the dimension.

At this point we can collect the various information about $\rho_0$, $N(x_0)$. Observe that
\begin{equation*}
\lim_{\sig\to2}A(n,2-\sig)(1-2^{(2-\sig)})^{-1}=C(n),
\end{equation*}
which follows from the fact that $A(n,2-\sig) \sim 2-\sig$ and $2^{2-\sig} \sim 1$ as $\sig \to 2$. We conclude
\begin{equation*}
-P(x_0)\geq C(n)2^{-2/(\lam \omega_n)}(-\Gamma(x_0))\rho_0^{(2-\sig)}.
\end{equation*}
Recall that $\rho_0= \left ( \frac{-\Gamma(x_0)}{2f(x_0)}\right
)^{1/\sig}$ and that trivially $-P(x_0)\leq -\inf P$
so that
\begin{equation*}
-\inf P \geq C(n)2^{-2/(\lam \omega_n)}(-\Gamma(x_0))\left ( \frac{-\Gamma(x_0)}{2f(x_0)}\right
)^{(2-\sig)/\sig},
\end{equation*}
which proves the proposition.
\end{proof}

\begin{rem}
There is a strange but straightforward phenomenon for $\Gamma$ resulting from the nonlocal nature of $M^-$.  As above, suppose that $\Gamma(x_0)=\inf\{\Gamma\}$; note $x_0\in B_1$ by the assumptions on $u$.  By the regularity of $\Gamma$, we can evaluate $M^-(\Gamma,x_0)$ (and also recall $\Gamma=0$ outside of $B_3$), and we see that for an appropriate $A$ with $\Tr(A)\geq \lam$
\begin{align*}
f(x_0)&\geq M^-(\Gamma.x_0)\geq (2-\sig)\int_{\real^n}\del\Gamma(x_0,y)y^TAy\abs{y}^{-n-\sig-2}dy\\
&> (2-\sig)\int_{\abs{y}>4}\del\Gamma(x_0,y)y^TAy\abs{y}^{-n-\sig-2}dy
\geq -\Gamma(x_0)C(n,\lam,\sig).
\end{align*}
This gives the very strange conclusion that 
\begin{equation*}
f(x_0)> -C(n,\lam,\sig)\Gamma(x_0),
\end{equation*}
so long as $\sig<2$, which is purely an artifact of the nonlocal nature of the operators (this information is not preserved as $\sig\to2$).  Furthermore, this gives good confirmation that the choice of $\rho_0$ in Proposition \ref{prop:PointToMeasureSIGMA} will never be too large.
\end{rem}

There is another way of stating Proposition \ref{prop:PointToMeasureSIGMA} which allows for a more intuitive interpretation as an interpolation result. 
\begin{cor}There exists $C(n,\lambda)>0$ such that if $\sigma,\Gamma$
and $P$ are all as above, then:
\begin{equation*}
\|\Gamma\|_\infty       \leq
C(n)\|P\|_\infty^{\sig/2}\;\|f\|_\infty^{(2-\sig)/2}
\end{equation*}
\end{cor}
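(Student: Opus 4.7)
The plan is to derive this corollary directly from Proposition~\ref{prop:PointToMeasureSIGMA} by unpacking what the two $L^\infty$ norms mean in this setting and performing an algebraic rearrangement.

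First, I would identify the two sides of the inequality with pointwise extremal values. Because $\Gamma \leq u \leq 0$ in $B_1$ and $\Gamma = 0$ outside $B_3$ (Lemma~\ref{lem:GammaCompactSupport}), one has $\|\Gamma\|_\infty = -\inf_{\real^n}\Gamma = -\Gamma(x_0)$, where $x_0$ is a minimizer of $\Gamma$; and by Corollary~\ref{cor:InfPB3}, $\|P\|_\infty \geq -\inf_{\real^n} P = -\inf_{B_3} P$ (in fact equality, once one observes $P \leq 0$, which follows from $\Gamma \leq 0$ together with positivity of the Riesz kernel).

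Next I would apply Proposition~\ref{prop:PointToMeasureSIGMA} at that minimizer $x_0$, giving
\begin{equation*}
\|P\|_\infty \;\geq\; C(n,\lambda)\,\bigl(-\Gamma(x_0)\bigr)^{2/\sig}\bigl(2f(x_0)\bigr)^{-(2-\sig)/\sig}.
\end{equation*}
Bounding $f(x_0) \leq \|f\|_\infty$ from above (which weakens the inequality since $(2-\sig)/\sig \geq 0$), I obtain
\begin{equation*}
\|P\|_\infty \;\geq\; C(n,\lambda)\,\|\Gamma\|_\infty^{2/\sig}\,\|f\|_\infty^{-(2-\sig)/\sig}.
\end{equation*}

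Finally I would solve for $\|\Gamma\|_\infty$ by raising both sides to the power $\sig/2$, which yields
\begin{equation*}
\|\Gamma\|_\infty \;\leq\; C(n,\lambda)\,\|P\|_\infty^{\sig/2}\,\|f\|_\infty^{(2-\sig)/2},
\end{equation*}
as desired. There is no real obstacle here; the proposition already encodes the interpolation, and the only care needed is to ensure that the direction of the inequality is preserved after raising to the power $\sig/2 \in (0,1)$, which it is because both sides are nonnegative. The interpretive content of the corollary — that Proposition~\ref{prop:PointToMeasureSIGMA} is genuinely an interpolation bound between $L^\infty$ norms of $\Gamma$, $P$, and $f$ — is the real point rather than any technical difficulty.
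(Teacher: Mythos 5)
Your proposal is correct and is essentially the paper's own reading of this corollary: the paper presents it as a direct restatement of Proposition \ref{prop:PointToMeasureSIGMA} (with no separate proof), obtained exactly by identifying $\|\Gamma\|_\infty=-\Gamma(x_0)$ and $\|P\|_\infty\geq -\inf P$, bounding $f(x_0)\leq\|f\|_\infty$, and raising to the power $\sigma/2$. Your attention to the sign of the exponent $(2-\sigma)/\sigma$ and to the direction of the inequality under the power $\sigma/2$ is exactly the right bookkeeping.
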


\noindent
Written in this form, the nature of Proposition
\ref{prop:PointToMeasureSIGMA} as an interpolation estimate becomes
clear. To see why such an estimate is to be expected, we prove a
closely related result.

\begin{prop}[Interpolation; also as Lemma 4.1 of \cite{Silv-07}]\label{prop:StandardInterpolationEstimate} Suppose
$\sigma,\Gamma$ and $P$ are as before, then if $P$ is
$C^{1,1}(\real^n)$ we have
\begin{equation*}
\|\Gamma\|_\infty       \leq
C(n,\lambda)\|P\|_\infty^{\sig/2}\;[P]_{C^{1,1}}^{(2-\sig)/2}
\end{equation*}
\end{prop}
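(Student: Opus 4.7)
The plan is to exploit the identity $\Laplace{(2-\sig)/2}P = \Gamma$ coming from the definition of $P$ as the Riesz potential of $\Gamma$ of order $2-\sig$, and then apply the standard ``split-and-optimize'' argument for the fractional Laplacian of a $C^{1,1}$ function. Using the integral representation from Section \ref{subsec:Notation} (with $\sig$ replaced by $2-\sig$), one has, up to a universal constant $c(n,2-\sig) = \tfrac{(2-\sig)(n-\sig)}{2}A(n,\sig)$,
\begin{equation*}
\Gamma(x) \;=\; -c(n,2-\sig)\int_{\real^n}\frac{\del P(x,y)}{\abs{y}^{n+(2-\sig)}}\,dy.
\end{equation*}

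For each fixed $x$ and each radius $r>0$ (to be chosen), I would split the region of integration as $\{\abs{y}<r\}\cup\{\abs{y}\geq r\}$. On the inner piece, the $C^{1,1}(\real^n)$ assumption on $P$ gives $\abs{\del P(x,y)}\leq [P]_{C^{1,1}}\abs{y}^2$, and a direct computation in polar coordinates yields
\begin{equation*}
\int_{\abs{y}<r}\frac{\abs{\del P(x,y)}}{\abs{y}^{n+(2-\sig)}}\,dy \;\leq\; \frac{\om_n}{\sig}[P]_{C^{1,1}}\,r^{\sig}.
\end{equation*}
On the outer piece, the crude bound $\abs{\del P(x,y)}\leq 4\norm{P}_{\infty}$ gives
\begin{equation*}
\int_{\abs{y}\geq r}\frac{\abs{\del P(x,y)}}{\abs{y}^{n+(2-\sig)}}\,dy \;\leq\; \frac{4\om_n}{2-\sig}\norm{P}_{\infty}\,r^{-(2-\sig)}.
\end{equation*}

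Combining these two bounds yields an estimate of the form $\abs{\Gamma(x)}\leq A\,r^{\sig}+B\,r^{-(2-\sig)}$. Minimizing over $r$ by calculus one finds the optimal choice $r=(\norm{P}_\infty/[P]_{C^{1,1}})^{1/2}$, and substituting back both terms contribute precisely the factor $\norm{P}_\infty^{\sig/2}[P]_{C^{1,1}}^{(2-\sig)/2}$. Taking the supremum in $x$ produces the stated inequality.

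The only book-keeping issue I expect is checking that the constant $c(n,2-\sig)\bigl(\tfrac{1}{\sig}+\tfrac{4}{2-\sig}\bigr)$ remains bounded as $\sig$ approaches either endpoint of $(0,2)$. This is not a real obstacle: the prefactor $c(n,2-\sig)$ carries a $(2-\sig)$ that cancels the $1/(2-\sig)$ from the outer integral, while the known asymptotic $A(n,\sig)\sim\sig$ as $\sig\to0^+$ (recorded in Section \ref{subsec:Notation}) cancels the $1/\sig$ from the inner integral. So the constant absorbs cleanly into a dimensional $C(n)$, and no further ingredient beyond the fractional Laplacian representation of $\Gamma$ is needed.
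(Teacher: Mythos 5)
Your proposal is correct and follows essentially the same route as the paper: the identity $\Gamma=(-\Delta)^{(2-\sig)/2}P$, the split of the integral at a radius $\rho$, the $C^{1,1}$ bound on the inner piece and the $L^\infty$ bound on the outer piece, and optimization over $\rho$ to produce $\|P\|_\infty^{\sig/2}[P]_{C^{1,1}}^{(2-\sig)/2}$, with the same observation that the normalization constant of the order-$(2-\sig)$ fractional Laplacian keeps the final constant bounded over $\sig\in(0,2)$.
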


\begin{proof}[Proof of Proposition \ref{prop:StandardInterpolationEstimate}]
The proof is tailored after interpolation estimates for H\"older norms
(for similar, but not identical statements, the interested reader
should see \cite[Chapter 6, Appendix]{GiTr-98}).

Since $\Gamma =\Laplace{(2-\sigma)/2} P$ we always have (with
$A=\abs{A(n,-(2-\sigma))}$ )
\begin{equation*}
|\Gamma (x)|= A\left | \int_{\real^n}\frac{\delta
P(x,y)}{|y|^{n+2-\sigma}}dy \right |
\end{equation*}
\begin{equation}\label{eqn:InterpolationProp1}\leq A\left |
\int_{B_\rho}\frac{\delta P (x,y)}{|y|^{n+2-\sigma}}dy \right | +
A\left | \int_{B_\rho^c}\frac{\delta P(x,y)}{|y|^{n+2-\sigma}}dy
\right |\;\;\forall \rho>0.
\end{equation}

\noindent
The second integral can be controlled as follows
\begin{equation*}
\left | \int_{B_\rho^c}\frac{\delta P(x,y)}{|y|^{n+2-\sigma}}dy \right
| \leq  2\|P\|_{L^\infty(\real^n)}
\int_{B_\rho^c}\frac{1}{|y|^{n+2-\sigma}}dy=    2
\omega_n\|P\|_{L^\infty(\real^n)}\int_\rho^{+\infty} r^{-3+\sigma}dr
\end{equation*}
\begin{equation}\label{eqn:InterpolationProp2}
=\|P\|_{L^\infty(\real^n)}\frac{2\omega_n}{2-\sigma}\rho^{-(2-\sigma)}.
\end{equation}

\noindent
For the first integral we have
\begin{equation*}
\left | \int_{B_\rho}\frac{\delta P(x,y)}{|y|^{n+2-\sigma}}dy \right |
\leq  \left ( \sup \limits_{|y|\leq \rho} \frac{|\delta
P(x,y)|}{|y|^2}\right ) \int_{B_\rho}\frac{1}{|y|^{n-\sigma}}dy
\end{equation*}
\begin{equation}\label{eqn:InterpolationProp3}
=\left ( \sup \limits_{|y|\leq \rho} \frac{|\delta
P(x,y)|}{|y|^2}\right ) \frac{\omega_n}{\sigma}\rho^{\sigma}\leq
[P]_{C^{1,1}(\real^n)}\frac{\omega_n}{\sigma}\rho^{\sigma}.
\end{equation}

\noindent
We may now plug inequalities (\ref{eqn:InterpolationProp2}) and
(\ref{eqn:InterpolationProp3})
in (\ref{eqn:InterpolationProp1}) to obtain:
\begin{equation}\label{eqn:InterpolationProp4}
|\Gamma(x)|\leq A\omega_n\left (
\frac{2}{2-\sigma}\|P\|_{L^\infty(\real^n)}\rho^{-(2-\sigma)}+
\frac{C_n[P]_{C^{1,1}}(\real^n)}{\sigma} \rho^\sigma\right )
\;\;\;\;\forall \rho>0.
\end{equation}

\noindent
Then we may pick the $\rho>0$ that minimizes the  right hand side  of
(\ref{eqn:InterpolationProp4}). Since we have a convex function of the
parameter $\rho$ we only need to get its critical point:
\begin{equation*}
\frac{d}{d\rho} \left  (
\frac{a}{2-\sigma}\rho^{-(2-\sigma)}+\frac{b}{\sigma}\rho^\sigma        \right
)=0 \Rightarrow a\rho^{-(3-\sigma)}=b \rho^{\sigma-1}
\Rightarrow \rho=\sqrt{a/b}.
\end{equation*}
Finally, putting $\rho=\sqrt{a/b}$ gives us the minimum value (basically we used Young's inequality)
\begin{equation*}
\frac{a}{2-\sigma}a^{-(2-\sigma)/2}b^{(2-\sigma)/2}+\frac{b}{\sigma}a^{\sigma/2}b^{-\sigma/2}=\frac{2}{(2-\sigma)\sigma}a^{\sigma/2}b^{(2-\sigma)/2}.
\end{equation*}

\noindent
Where $a=2\|P\|_{L^\infty(\real^n)}$ and
$b=C_n[P]_{C^{1,1}(\real^n)}$, going back to (\ref{eqn:InterpolationProp4})
we obtain
\begin{equation*}
|\Gamma(x)| \leq \frac{2^{1+\sigma/2}\omega_n C_n^{(2-\sigma)/2}
A(n,-(2-\sigma))}{(2-\sigma)\sigma
}\|P\|_{L^\infty(\real^n)}^{\sigma/2}[P]_{C^{1,1}(\real^n)}^{(2-\sigma)/2}.
\end{equation*}

The proof is now done, as the term on the right hand side stays
uniformly bounded for $\sigma \in (0,2)$, which is thanks to the
behavior of $A(n,-(2-\sigma))$ as $\sigma\to 2$.
\end{proof}

Before we conclude this section, we must prove one last straightforward feature of the potential, $P$.  We must be able to compare the values of $P$ on the boundary of some ball, $B_R$, and make sure that the difference between the values on the boundary and the infimum of $P$ are still comparable to the infimum itself. 

\begin{lem}[Decay  of The Potential]\label{lem:PotenialOscillation}
There exists a radius, $R_\sig$, depending on $\sig$ and $n$ such that 
\begin{equation*}
-\inf_{B_{R_\sig}}\{P\} + \inf_{\partial B_{R_\sig}}\{P\} \geq \frac{1}{2} \left(-\inf_{B_{R_\sig}}\{P\}\right) .
\end{equation*}
\end{lem}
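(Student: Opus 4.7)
My plan is to exploit the fact that $P$ is the Riesz potential of $\Gamma$, which is supported in $\overline{B_3}$ by Lemma \ref{lem:GammaCompactSupport}, and that $\Gamma\le 0$ everywhere. The strategy is to show that as $|x|\to\infty$, the values $-P(x)$ decay uniformly in a way that only depends on the geometry of the Riesz kernel, and then choose $R_\sig$ large enough to make the decay factor at least $1/2$.

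First, by Corollary \ref{cor:InfPB3}, for any $R\ge 3$ one has $\inf_{B_R} P = \inf_{\real^n} P = P(x_0)$ for some $x_0\in\overline{B_3}$. Therefore the desired inequality is equivalent to
\[
 -P(x) \;\le\; \tfrac{1}{2}\bigl(-\inf_{\real^n} P\bigr) \qquad \text{for every } x\in \partial B_{R_\sig},
\]
which I will obtain from a pointwise comparison. Using the explicit representation
\[
 -P(x) \;=\; A(n,2-\sig)\int_{B_3}\bigl(-\Gamma(y)\bigr)|x-y|^{-n+2-\sig}\,dy,
\]
and the sign condition $-\Gamma\ge 0$, the ratio $(-P(x))/(-P(x_0))$ is bounded above by the pointwise supremum of the kernel ratio. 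For $|x|=R\ge 4$ and $y\in B_3$ we have $|x-y|\ge R-3$, while for $x_0\in\overline{B_3}$ and $y\in B_3$ we have $|x_0-y|\le 6$. Since $-n+2-\sig<0$, these give
\[
 |x-y|^{-n+2-\sig}\le (R-3)^{-n+2-\sig}, \qquad |x_0-y|^{-n+2-\sig}\ge 6^{-n+2-\sig}.
\]

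Plugging these bounds into the Riesz representation yields
\[
 \frac{-P(x)}{-P(x_0)} \;\le\; \left(\frac{R-3}{6}\right)^{-(n-2+\sig)} \qquad \text{for every } |x|=R\ge 4.
\]
Since $n\ge 2$ and $\sig\in(0,2)$, the exponent $n-2+\sig>0$, so choosing
\[
 R_\sig \;=\; 3 + 6\cdot 2^{1/(n-2+\sig)}
\]
forces the right-hand side to be $\le 1/2$, which is the claim. The only thing one must observe is that $R_\sig\ge 9$ so that the bound $|x-y|\ge R-3\ge 1$ is valid, and that $R_\sig\ge 3$ so Corollary \ref{cor:InfPB3} applies. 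The main (and only) subtlety is the blow up of $R_\sig$ as $\sig\to 0$ when $n=2$, but since the lemma is allowed to depend on $\sig$ and $n$, this is harmless.
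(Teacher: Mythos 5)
Your proof is correct and follows essentially the same route as the paper: both arguments use the compact support of $\Gamma$ in $\overline{B_3}$, the fact that the minimizer $x_0$ of $P$ lies in $\overline{B_3}$, and the kernel bounds $|x-y|\ge R-3$ and $|x_0-y|\le 6$ to obtain the decay factor $\left(\tfrac{R-3}{6}\right)^{-n+2-\sig}$, then choose $R_\sig$ so that this factor is at most $1/2$. The only cosmetic difference is that the paper bounds the ratio of kernels pointwise inside a single integral after a change of variables, while you bound the two integrals separately and divide, which yields the identical constant.
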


\begin{proof}[Proof of Lemma \ref{lem:PotenialOscillation}]
Let $x\in\partial B_R$ and $x_0$ be such that $-P(x_0)=-\inf(P)$.  We will first make a computation with a generic $R$ and then choose the value for $R_\sig$ at the end.  Lemma \ref{lem:GammaCompactSupport} tells us that $\Gamma=0$ in $\real^n\setminus B_3$.  Therefore we can estimate the integral
\begin{align*}
-P(x)&=A(n,2-\sig)\int_{\real^n}-\Gamma(x+y)\abs{y}^{-n+2-\sig}dy\\
&= A(n,2-\sig)\int_{x+y\in B_3}-\Gamma(x+y)\abs{y}^{-n+2-\sig}dy\\
&= A(n,2-\sig)\int_{x_0+w\in B_3}-\Gamma(x_0+w)\abs{x_0-x+w}^{-n+2-\sig}dw\\
&
=A(n,2-\sig)\int_{x_0+w\in B_3}-\Gamma(x_0+w)\abs{w}^{-n+2-\sig}\left(\frac{\abs{x_0-x+w}}{\abs{w}}\right)^{-n+2-\sig}dw
\\
&\leq A(n,2-\sig)\int_{x_0+w\in B_3}-\Gamma(x_0+w)\abs{w}^{-n+2-\sig}\left(\frac{R-3}{6}\right)^{-n+2-\sig}dw\\
&= P(x_0)\left(\frac{R-3}{6}\right)^{-n+2-\sig}.
\end{align*}
Now, choosing $R_\sig$ so that
\begin{equation*}
\left(\frac{R_\sig-3}{6}\right)^{-n+2-\sig}\leq \frac{1}{2}
\end{equation*}
gives the result.
\end{proof}

\section{$\det(D^2P)$ as an Integro-Differential Operator on $\Gamma$}\label{sec:DetD2P}
\setcounter{equation}{0}
The point of using the potential, $P$, is that we can convert our fractional order equation into a 2nd order equation and appeal to known results.  In this way we would like also to recognize the operation $\det(D^2P)$ as a $\sig$-order operator on $\Gamma$, which one can hope will be comparable to $M^-$ by below (proved in Lemma \ref{lem:NonlocalDetMminusOrder}).  This comparison would then allow us to bring together the geometric result involving $\inf(P)$ and the envelope property that $M^-(\Gamma)\leq Cf$ on the set $K_u$.  Fundamental to this pursuit is the formula for the determinant: 
\begin{equation*}
\left(\det(B)\right)^{1/n} = \frac{1}{n}\inf\{\Tr(AB) : A\geq0\ \text{and}\ \det(A)=1\}\ \  \mbox{whenever }B \geq 0.
\end{equation*}
The interested reader should consult \cite{CaNiSp-85} and \cite{Gard-59} for further discussion.  The integro-differential nature of $\det(D^2P)$ acting on $\Gamma$ is developed by making rigorous the formal computation of convolving $\Gamma$ with the derivatives of the Riesz Kernel.

\begin{lem}[Derivatives of The Potential]\label{lem:SingularIntegralPij}
Assume that for a.e. $x\in\Om\subset\subset B_3$, 
$$0\leq h_\sig(\Gamma,x)\leq C_1\ \ \text{and}\ \ \norm{\Gamma}_{H^\sig(\Om)}\leq C_1,$$
where $C_1$ is allowed to depend on $\Om$.  
Then $P \in C^{1,1}(\Om)$ and  the second derivative of $P$ at a.e. $x \in \Om$ in terms of $\Gamma$ is
\begin{align}
&D^2P(x) = \nonumber \\ 
&\frac{(n+\sig-2)(n+\sig)}{2}\int_{\real^n} A(n,2-\sig) \del \Gamma(x,y)  \left[\frac{y\otimes y}{\abs{y}^{n+\sig+2}}-
                                                                          \frac{\Id}{(n+\sig)\abs{y}^{n+\sig}}\right]dy.
\label{eq:Pxixj}
\end{align}
\end{lem}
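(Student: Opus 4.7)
My plan is to obtain \eqref{eq:Pxixj} by verifying it directly for smooth, compactly supported $\Gamma$ and then passing to the general case through mollification, leveraging the pointwise and $H^\sigma$ bounds supplied in the hypothesis. The starting ingredient is an elementary computation of the Hessian of the Riesz kernel $K_{2-\sigma}(y)=A(n,2-\sigma)|y|^{-(n-2+\sigma)}$ away from the origin, yielding
\[
D^2 K_{2-\sigma}(y)= A(n,2-\sigma)(n+\sigma-2)(n+\sigma)\left[\frac{y\otimes y}{|y|^{n+\sigma+2}}-\frac{\Id}{(n+\sigma)|y|^{n+\sigma}}\right].
\]
This matrix-valued kernel is even in $y$ and homogeneous of degree $-(n+\sigma)$, matching exactly (up to the factor of $\tfrac{1}{2}$ produced by the symmetrization step below) the kernel inside the integral in \eqref{eq:Pxixj}.

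For smooth compactly supported $\Gamma$, I would exploit the evenness of $K_{2-\sigma}$ to write
\[
P(x)=\int K_{2-\sigma}(y)\,\Gamma(x-y)\,dy=\tfrac{1}{2}\int K_{2-\sigma}(y)\bigl[\Gamma(x+y)+\Gamma(x-y)\bigr]dy=\tfrac{1}{2}\int K_{2-\sigma}(y)\bigl[\delta\Gamma(x,y)+2\Gamma(x)\bigr]dy,
\]
and then differentiate twice in $x$. Passing the derivatives under the integral and using an integration by parts (legitimate for smooth $\Gamma$ on any shell $\{r<|y|<R\}$, followed by taking limits) transfers them onto $K_{2-\sigma}$, at the cost of a principal-value constant $\Gamma(x)\int D^2 K_{2-\sigma}$. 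This constant either drops out because it is $x$-independent before differentiation, or, interpreted as a principal value, vanishes because of the oddness of the first derivatives of $K_{2-\sigma}$ on spheres. The resulting identity is precisely \eqref{eq:Pxixj}.

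To extend to general $\Gamma$ I would set $\Gamma_\ep:=\Gamma*\rho_\ep$ and $P_\ep:=\Gamma_\ep*K_{2-\sigma}$, for which the identity holds by the smooth case. The hypothesized bound $0\leq h_\sigma(\Gamma,x)\leq C_1\Id$ transfers to $h_\sigma(\Gamma_\ep,\cdot)$ through convolution with $\rho_\ep$, and Corollary \ref{cor:AppendixAllOperators} (combined with $\|\Gamma\|_{H^\sigma(\Omega)}\leq C_1$) furnishes uniform-in-$\ep$ control of $\int |\delta\Gamma_\ep(x,y)||y|^{-n-\sigma}\,dy$ for a.e.\ $x\in\Omega$. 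Because the kernel bracket in \eqref{eq:Pxixj} is pointwise bounded in operator norm by a constant multiple of $|y|^{-n-\sigma}$, dominated convergence passes the identity to the $\ep\to 0^+$ limit at a.e.\ $x\in\Omega$. Uniform boundedness of the right-hand side then forces $D^2 P\in L^\infty(\Omega)$, whence $P\in C^{1,1}(\Omega)$.

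The main obstacle is handling the non-integrable singularity of $D^2 K_{2-\sigma}$ at the origin: the symmetrization producing $\delta\Gamma(x,y)$ is what renders the formula convergent for a.e.\ $x$, and the delicate step is ensuring that the principal-value renormalization carried out in the smooth case survives intact under mollification and dominated convergence. Once this is done, the stated identity together with the a.e.\ boundedness of $h_\sigma(\Gamma,\cdot)$ and $(-\Delta)^{\sigma/2}\Gamma$ on $\Omega$ immediately yields both \eqref{eq:Pxixj} and the $C^{1,1}(\Omega)$ regularity claim.
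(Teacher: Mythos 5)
Your overall architecture (compute $D^2K_{2-\sig}$, symmetrize to produce $\del\Gamma(x,y)$, regularize, pass to the limit by dominated convergence using the $|y|^{-n-\sig}$ bound on the kernel bracket) runs parallel to the paper's, with one structural difference: you mollify the \emph{function} $\Gamma$, whereas the paper regularizes the \emph{kernel}, replacing $K_{2-\sig}$ inside $B_\al$ by the quadratic polynomial $Q_\al$ that makes $K^\al_{2-\sig}$ globally $C^{1,1}$. That difference is not cosmetic: it is precisely where your argument has a gap. Your justification of the renormalization step — that the constant $\Gamma(x)\int D^2K_{2-\sig}$ "vanishes because of the oddness of the first derivatives of $K_{2-\sig}$ on spheres" — is false. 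Oddness kills the spherical averages of $\nabla K_{2-\sig}$, but $D^2K_{2-\sig}$ is \emph{even}, and its spherical average is a nonzero multiple of the identity: averaging $(n+\sig)\frac{y\otimes y}{\abs{y}^2}-\Id$ over $\partial B_r$ gives $\frac{\sig}{n}\Id$, so
\begin{equation*}
\int_{r<\abs{y}<R}D^2K_{2-\sig}(y)\,dy \;=\; A(n,2-\sig)(n+\sig-2)\abs{B_1}\left(r^{-\sig}-R^{-\sig}\right)\Id,
\end{equation*}
which diverges as $r\to0^+$. There is no principal value here. Your first alternative ("drops out because it is $x$-independent before differentiation") also fails: $\int K_{2-\sig}(y)\,dy=+\infty$ at infinity, so $P$ cannot be split as $\tfrac12\int K\,\del\Gamma + \Gamma(x)\int K$ in the first place.

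The divergence is real and must be cancelled by something. In a shell-by-shell integration by parts it is cancelled by the boundary terms on $\partial B_r$ (which also blow up like $r^{-\sig}$), and making that cancellation explicit is the actual content of the smooth-case computation you skip. The paper sidesteps it entirely by an exact algebraic identity: the constant Hessian of $Q_\al$ integrated over $B_\al$ equals $-A(n,2-\sig)(n+\sig-2)\abs{B_1}\al^{-\sig}\Id$, which is exactly the negative of $\int_{B_\al^c}D^2K^\al_{2-\sig}$, so that $\int_{\real^n}D^2K^\al_{2-\sig}=0$ for every $\al>0$ and the subtraction of $2\Gamma(x)$ is genuinely free. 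To repair your proof, either carry out the boundary-term bookkeeping in the shell integration by parts, or adopt the modified kernel; as written, the central identity in the smooth case is unproved and the heuristic offered for it is incorrect. (A secondary, smaller point: Corollary \ref{cor:AppendixAllOperators} is a statement about solutions of the obstacle problem, not a generic interpolation tool, so the uniform-in-$\ep$ bound on $\int\abs{\del\Gamma_\ep(x,y)}\abs{y}^{-n-\sig}dy$ should instead be extracted directly from the hypotheses, as the paper does via the uniform pointwise bound $\abs{D^2K^\al_{2-\sig}(y)}\leq C\,A(n,2-\sig)\abs{y}^{-n-\sig}$ together with the $H^\sig$ control.)
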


\begin{rem}
It is worth pointing out explicitly that Lemma \ref{lem:SingularIntegralPij} gives enough regularity for our purposes later to work with $P$ directly in any estimates involving $D^2P$.
\end{rem}

\begin{rem}\label{rem:DerivativesCompactFormula} In particular, our definition of $h_\sig(v,x)$ gives
\begin{equation*}
D^2 ( v * K_{2-\sig}) = h_\sig(v,x) +\tfrac{(-\Delta)^{\sig/2}v(x)}{n+\sig}\;\Id.
\end{equation*}
\end{rem}

\begin{proof}[Proof of Lemma \ref{lem:SingularIntegralPij}]
We consider as a regularization of $P$, the convolution of $\Gamma$ with a modified Riesz Kernel $K^\al_{2-\sigma}$, where for $\alpha>0$ we define
\begin{equation*}
K^\alpha_{2-\sigma}(y) := \left \{ \begin{array}{rl}
K_{2-\sigma}(y) & \mbox{ if } |y|\geq \alpha\\
Q_\alpha(y) & \mbox{ if } |y| \leq \alpha
\end{array}\right.	
\end{equation*}
Here $Q_\alpha(y)$ is the only quadratic polynomial that makes $K^\alpha_{2-\sigma}$ as defined above a $C^{1,1}(\real^n)$ function across $\partial B_\alpha$, 
\begin{equation*}
Q_\alpha(y) = \frac{A(n,2-\sigma)}{2\alpha^{n+\sigma}} \left ( -(n+\sigma-2)|y|^2+(n+\sigma)\alpha^2\right ).	
\end{equation*}
It will be handy to have the formula for the second derivatives of $K^\al_{2-\sigma}$ away from $\partial B_\alpha$
\begin{equation}\label{eq:KAlDerivativesFormula}
D^2K^\alpha_{2-\sigma}(y)= \left \{ \begin{array}{rl}
A(n,2-\sig)|y|^{-n-\sig}(n+\sig-2)\left((n+\sig)\frac{y\; \otimes \;y}{\abs{y}^2}-\Id\right) & \mbox{ if }  |y|>\alpha\\
\\
-\frac{A(n,2-\sig)(n+\sig-2)}{\al^{n+\sig}} \Id & \mbox{ if } |y|<\alpha
\end{array}\right.
\end{equation}
We also use the elementary identities (which follow by reflection and rotation symmetry):
\begin{equation}\label{eq:PDerivativesZeroCrossTermsIntegral}
\int_{\partial B_r}y_iy_jdS(y)=0 \;\; \mbox{ for } i\neq j,\;\;\int_{\partial B_r}y_i^2dS(y)=\frac{r^2}{n}\int_{\partial B_r}dS(y)\;\;\forall r>0
\end{equation}

We denote the regularization of $P$ as $P_\al:=\Gamma*K^\al_{2-\sig}$, note that $P_\al\in C^{1,1}(\real^n)$ for each $\al>0$. To obtain (\ref{eq:Pxixj}), we will compute the derivatives using (\ref{eq:KAlDerivativesFormula}) inside of the convolution.  Then, imitating a classical analytic continuation argument \cite[p.45-47]{Land-72} we will pass the formula for $D^2P_\alpha$ to the limit $\al \to 0^+$. First, by the reflection symmetry of $K^\al_{2-\sig}$ about $0$ we may write
\begin{equation}\label{eqn: D2Pstepone}
D^2P_\al(x) = 	\frac{1}{2}\int_{\real^n} \left (\Gamma(x+y)+\Gamma(x-y) \right ) D^2K^\al_{2-\sig}(y)dy
\end{equation}

The classical argument relies on computing the average $D^2K^\al_{2-\sig}$ in and out of $B_\al$, we begin by noting that according to (\ref{eq:KAlDerivativesFormula})
\begin{equation*}
\int_{B_\al}D^2K^\al_{2-\sig}(y)dy = -A(n,2-\sig)(n+\sig-2)\frac{|B_1|}{\al^\sig}\Id.
\end{equation*}

\noindent
Next, using integration in spherical coordinates together with (\ref{eq:PDerivativesZeroCrossTermsIntegral}) it is easy to check that
\begin{align*}
\int_{B_\al^c}D^2K^\al_{2-\sig}(y)dy &= A(n,2-\sig)(n+\sig-2)\int_{B_\al^c}|y|^{-n-\sig}\left ((n+\sig)\frac{y\otimes y}{\abs{y}^2}-\Id \right ) \; dy\\
&=A(n,2-\sig)(n+\sig-2)\int_{B_\al^c}|y|^{-n-\sig}\left (\frac{(n+\sig)}{n}\Id-\Id  \right ) \; dy\\
&=A(n,2-\sig)(n+\sig-2)\frac{|B_1|}{\al^\sig}\Id. 
\end{align*}

\noindent
Thus the integral of $D^2K^\al_{2-\sig}$ in and out of $B_\al$ is the same except for a sign, in other words
\begin{equation*}
\int_{\real^n} D^2K^\al_{2-\sig}(y)dy=0\;\;\forall \alpha>0.
\end{equation*}

This means that the identity (\ref{eqn: D2Pstepone}) still holds if we add zero to the right hand side as follows
\begin{align*}
D^2P_\al(x) &= \frac{1}{2}\int_{\real^n} \left (\Gamma(x+y)+\Gamma(x-y) \right ) D^2K^\al_{2-\sig}(y)dy-\Gamma(x)\int_{\real^n} D^2K^\al_{2-\sig}(y)dy,
\end{align*}
in other words, 
\begin{equation*}
D^2P_\al(x) = \frac{1}{2}\int_{\real^n} \delta \Gamma(x,y) D^2K^\al_{2-\sig}(y)dy, \;\;\forall \;\al>0.
\end{equation*}

\noindent
This, together with the pointwise bound
\begin{equation*}
|D^2K^\al_{2-\sig}(y)|\leq A(n,2-\sig)C(n)|y|^{-n-\sig},\;\;\;\forall y\; \in \real^n,\;\alpha>0,
\end{equation*} 
implies for all $x\in B_3$ that
\begin{equation}\label{eq:D2PAlphaBound}
|D^2P_\al(x)|\leq A(n,2-\sig)C(n)\int_{\real^n} \frac{|\delta \Gamma(x,y)|}{|y|^{n+\sig}}dy.	
\end{equation}

We now make a few remarks to conclude the lemma.  First, by construction of $K^\al_{2-\sig}$, it follows that $P^\al\to P$ uniformly.  Second, (\ref{eq:D2PAlphaBound}) combined with the assumption that $\Gamma\in H^\sig_{loc}$ imply that all of the hessians, $D^2P_\al$, are uniformly bounded in $L^2(B_R)$ for any $B_R\subset\subset B_3$; furthermore by dominated convergence, $D^2P_\al(x)\to D^2P(x)$ pointwise for all $x$ such that 
\[
\int_{\real^n} \frac{|\delta \Gamma(x,y)|}{|y|^{n+\sig}}dy<\infty,
\]
which is a.e. $x$ in $B_3$.  Third, after extracting a $H^{2}_{loc}$ weakly convergent subsequence from $P_\al$ and using uniqueness of limits, we can conclude that $P$ indeed is $H^2_{loc}(B_3)$ and (\ref{eq:Pxixj}) for $D^2P$ indeed holds.

Finally, we note that once $D^2P$ can be computed a.e. $x$ by (\ref{eq:Pxixj}), the bound on $h_\sig(\Gamma,x)$ immediately translates to a $C^{1,1}(\Om)$ bound for $D^2P$.

\end{proof}

Now that we have a formula for $D^2P$,we can compute the rest of the operators needed for $\det(D^2P)$.

\begin{lem}[Integro-Differential Form of $\Tr(AD^2P)$]\label{lem:SingularIntegralTraceAHessionP}
For any positive matrix $A$ and $\sigma \in (0,2)$ we define 

\begin{equation}\label{eq:ASigma}
A_\sigma:= A+\frac{\Tr(A)}{\sigma}I. 
\end{equation}
Then $\Tr(A_\sigma D^2P)$ defines an elliptic integro-differential operator acting on $\Gamma$. Moreover, it is given by the formula
\begin{align}
&\Tr(A_\sigma D^2P)(x) = \nonumber \\ 
&\frac{(n+\sig-2)(n+\sig)}{2}A(n,2-\sig)\int_{\real^n} \del \Gamma(x,y) \frac{y^TAy}{\abs{y}^{n+\sig+2}}dy.
\label{eq:TraceD2POnGamma}
\end{align}
\end{lem}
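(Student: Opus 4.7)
The proof is a direct calculation from Lemma \ref{lem:SingularIntegralPij}, and the elegance of the identity is why $A_\sigma$ was defined with that particular lower-order correction. My plan is as follows.

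First, by linearity of the trace, I split
\begin{equation*}
\Tr(A_\sigma D^2 P)(x) = \Tr(A\, D^2 P(x)) + \tfrac{\Tr(A)}{\sigma}\, \Delta P(x).
\end{equation*}
For the first piece, I apply the trace under the integral in the formula from Lemma \ref{lem:SingularIntegralPij}, using $\Tr(A\,(y\otimes y)) = y^T A y$ and $\Tr(A\,\Id)=\Tr(A)$. This yields
\begin{equation*}
\Tr(A\,D^2 P)(x) = \tfrac{(n+\sig-2)(n+\sig)}{2} A(n,2-\sig) \int_{\real^n}\!\!\del\Gamma(x,y)\left[\frac{y^T A y}{|y|^{n+\sig+2}} - \frac{\Tr(A)}{(n+\sig)|y|^{n+\sig}}\right] dy.
\end{equation*}

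Next, I compute $\Delta P$ by the same trace operation applied to the bracketed expression in Lemma \ref{lem:SingularIntegralPij}. Since $\Tr\!\left(\tfrac{y\otimes y}{|y|^{n+\sig+2}}\right)=|y|^{-n-\sig}$, the two terms combine to give $\frac{\sig}{(n+\sig)|y|^{n+\sig}}$, so
\begin{equation*}
\Delta P(x) = \tfrac{\sig(n+\sig-2)}{2}A(n,2-\sig)\int_{\real^n}\frac{\del\Gamma(x,y)}{|y|^{n+\sig}}\,dy,
\end{equation*}
which is consistent with the identity $\Delta P = -(-\Delta)^{\sig/2}\Gamma$ from the Riesz-potential setup and the definition of $-(-\Delta)^{\sig/2}$ in Section \ref{subsec:Notation}. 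Therefore
\begin{equation*}
\tfrac{\Tr(A)}{\sig}\Delta P(x) = \tfrac{(n+\sig-2)\Tr(A)}{2}A(n,2-\sig)\int_{\real^n}\frac{\del\Gamma(x,y)}{|y|^{n+\sig}}\,dy.
\end{equation*}

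Adding the two pieces, the lower-order term $-\tfrac{(n+\sig-2)\Tr(A)}{2}A(n,2-\sig)\int \del\Gamma\,|y|^{-n-\sig}\,dy$ coming from the $\Id$ part of $D^2P$ cancels exactly against $\tfrac{\Tr(A)}{\sig}\Delta P$, and we are left with the claimed identity (\ref{eq:TraceD2POnGamma}). This cancellation is the whole point of the definition (\ref{eq:ASigma}) of $A_\sigma$, and it is the only nontrivial step of the argument.

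Finally, for the ellipticity assertion: since $A\geq 0$, the quantity $y^T A y \geq 0$ pointwise, so the right-hand side of (\ref{eq:TraceD2POnGamma}) is an integral of $\del\Gamma(x,y)$ against the non-negative kernel $(n+\sig-2)(n+\sig) A(n,2-\sig)\,|y|^{-n-\sig-2}(y^T A y)/2$. This is precisely the form of a Lévy-type elliptic integro-differential operator of order $\sig$ in the class (\ref{eq:LinearGUISCH}), so the operator is degenerate elliptic in the sense of \cite{CaSi-09RegularityIntegroDiff}. I do not anticipate any real obstacle; the main subtlety is bookkeeping the constants so that the cancellation is visible.
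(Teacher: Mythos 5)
Your proof is correct and is essentially the same one-line computation as the paper's: both apply the trace to the formula for $D^2P$ from Lemma \ref{lem:SingularIntegralPij} and observe that the $\Id$-term cancels against $\tfrac{\Tr(A)}{\sigma}\Delta P$ precisely because $\tfrac{\Tr(A_\sigma)}{n+\sigma}=\tfrac{\Tr(A)}{\sigma}$. The only cosmetic difference is that the paper substitutes $A_\sigma$ directly into the kernel rather than splitting the trace into two pieces first.
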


\begin{proof}[Proof of Lemma \ref{lem:SingularIntegralTraceAHessionP}]
This is a direct consequence of Lemma \ref{lem:SingularIntegralPij} which says
\begin{align}
&\Tr(A_\sigma D^2P)(x) = \nonumber \\ 
&\frac{(n+\sig-2)(n+\sig)}{2}A(n,2-\sig)\int_{\real^n} \del \Gamma(x,y) \left[\frac{y^TA_\sigma y}{\abs{y}^{n+\sig+2}}-                                                                          \frac{\Tr(A_\sigma)}{(n+\sig)\abs{y}^{n+\sig}}\right]dy. \nonumber
\end{align}
Since $y^T A_\sigma y = y^T A y +\frac{\Tr(A)}{\sigma}|y|^2$ and $\frac{1}{n+\sigma}\Tr(A_\sigma)=\frac{\Tr(A)}{\sigma}$ this integral equals (\ref{eq:TraceD2POnGamma}).

\end{proof}

\begin{rem}\label{rem:ASigmaElliptic}
Observe that the matrix $A_\sigma$ becomes \emph{strictly elliptic}, that is, $A_\sigma \geq \lambda \Id$ as soon as $\Tr(A)\geq \sigma \lambda$.  It is thanks to this fact that our estimates apply to equations with kernels that are not strictly positive. The authors would like to acknowledge the anonymous referee for this important observation as well as their suggestion of introducing the change of variables $A \to A_\sigma$ which significantly helps with the presentation.
\end{rem}

\begin{DEF}[Integro-Differential Form of $\det(D^2P)$]\label{def:DetSingularIntegral}
The operator $\Gamma \to \det(D^2P)(x)$, defines a nonlinear integro-differential operator acting on $\Gamma$.  At the points, $x$, where
$$\int_{\real^n}\frac{\abs{\del\Gamma(x,y)}}{\abs{y}^{n+\sig}}dy<\infty\ \ \text{and}\ \ D^2P(x) \geq 0,$$ 
this definition agrees with 
\begin{equation*}
\D_\sig(\Gamma,x)=\left(\frac{1}{n}\inf\left\{\T_A(\Gamma,x):\ A\geq0\ \text{and}\ \det(A)=1\right\}\right)^n=\det(D^2P),
\end{equation*}
where we use the notation $\T_A(\Gamma,x):= \Tr(A D^2P(x))$.
\end{DEF}

\begin{rem}\label{rem:DsigmaPeculiar}
It is worthwhile to remark on the peculiar form of formulas (\ref{eq:Pxixj}) and (\ref{eq:TraceD2POnGamma}). Note that the
kernel 
\begin{equation*}\frac{y^TBy}{\abs{y}^{n+\sig+2}}-\frac{\Tr(B)}{(n+\sig)\abs{y}^{n+\sig}}
\end{equation*} might take negative values in some directions even for matrices such that $B\geq 0$, so the kernels appearing in the
formulas are not necessarily positive. The kernels will be positive for matrices of the form $A_\sigma:= A+\frac{\Tr(A)}{\sigma}I,\;\; A\geq 0$ which is why they were introduced in Lemma \ref{lem:SingularIntegralTraceAHessionP}.  However, matrices which are not of this form appear in the formula for the operator $\mathcal{D_\sigma}$, which means that  $\D_\sigma(\Gamma,x)=\det (D^2P(x))$ is not an elliptic operator for a general $\Gamma$ (not even if $E_\sigma(\Gamma)\geq 0$). Despite these issues, $\D_\sig$ serves a crucial purpose in the proof of Theorem \ref{thm:Main} as suggested by the the next two Lemmas. 
\end{rem}

\begin{lem}[Nonlocal Determinant - Minimal Operator Ordering]\label{lem:NonlocalDetMminusOrder} 
If
$\ds\int_{\real^n}\frac{\abs{\del\Gamma(x,y)}}{\abs{y}^{n+\sig}}dy<\infty$ 
and $E_\sig(\Gamma,x)\geq0$, then whenever $D^2P(x)\geq0$, it also holds that 
\begin{equation*}
\D_\sig(\Gamma,x)\leq \frac{C(n)}{\lam^n} (M^-(\Gamma,x))^n.
\end{equation*}
\end{lem}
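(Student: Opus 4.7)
The plan is to combine the matrix identity $\det(B)^{1/n}=\tfrac{1}{n}\inf\{\Tr(AB):A\geq 0,\det A=1\}$ valid for $B\geq 0$ with the correspondence between $\Tr(A_\sigma D^2P)$ and the integral operators that appear in the definition of $M^-$, established in Lemma \ref{lem:SingularIntegralTraceAHessionP}. Since $D^2P(x)\geq 0$ by hypothesis, Definition \ref{def:DetSingularIntegral} yields
\[
n\,\mathcal{D}_\sigma(\Gamma,x)^{1/n}=\inf\{\Tr(B D^2P(x)):B\geq 0,\ \det B=1\},
\]
so any particular choice of admissible $B$ gives an upper bound for the left-hand side.

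The restricted class of $B$'s I will use is built from the matrices appearing in $M^-$: for any $A\geq 0$ with $\Tr(A)\geq \lambda$ and $A\leq \Lambda\Id$, set $A_\sigma:=A+\tfrac{\Tr(A)}{\sigma}\Id$ as in \eqref{eq:ASigma}. Because $A\geq 0$, the smallest eigenvalue of $A_\sigma$ is at least $\tfrac{\Tr(A)}{\sigma}\geq\tfrac{\lambda}{\sigma}$, hence $A_\sigma>0$ and $\det(A_\sigma)^{1/n}\geq \lambda/\sigma$. Normalize by $B:=A_\sigma/\det(A_\sigma)^{1/n}$, which satisfies $B\geq 0$ and $\det B=1$, and is therefore admissible in the determinant formula above.

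Next I will use Lemma \ref{lem:SingularIntegralTraceAHessionP} to identify
\[
\Tr(A_\sigma D^2P)(x)=\alpha(n,\sigma)\,L_A(\Gamma,x),\qquad L_A(\Gamma,x):=(2-\sigma)\int_{\real^n}\delta\Gamma(x,y)\tfrac{y^TAy}{|y|^{n+\sigma+2}}\,dy,
\]
where $\alpha(n,\sigma)=\tfrac{(n+\sigma-2)(n+\sigma)A(n,2-\sigma)}{2(2-\sigma)}$ stays bounded by a dimensional constant for $\sigma\in(0,2)$ because $A(n,2-\sigma)\sim(2-\sigma)$ as $\sigma\to 2^-$. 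The hypothesis $E_\sigma(\Gamma,x)\geq 0$ means $h_\sigma(\Gamma,x)\geq 0$, which forces $L_A(\Gamma,x)\geq 0$ for every $A\geq 0$ (by diagonalizing $A$ and using $\tau^T h_\sigma(\Gamma,x)\tau\geq 0$), and in particular makes $M^-(\Gamma,x)\geq 0$. Plugging in:
\[
n\,\mathcal{D}_\sigma(\Gamma,x)^{1/n}\leq \Tr(B D^2P)=\det(A_\sigma)^{-1/n}\Tr(A_\sigma D^2P)\leq \tfrac{\sigma}{\lambda}\alpha(n,\sigma)L_A(\Gamma,x)\leq \tfrac{C(n)}{\lambda}L_A(\Gamma,x).
\]
Now infimize over all admissible $A$ on the right: this recovers exactly $M^-(\Gamma,x)$, so $n\,\mathcal{D}_\sigma(\Gamma,x)^{1/n}\leq \tfrac{C(n)}{\lambda}M^-(\Gamma,x)$, and raising both (non-negative) sides to the $n$-th power yields the claim.

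The only delicate point is ensuring the constant depends on $n$ alone and not on $\sigma$; this comes down to the uniform boundedness of $\alpha(n,\sigma)$ and the fact that $\sigma/\lambda\leq 2/\lambda$, both straightforward. The conceptual heart of the argument, and what makes the restricted ellipticity class of $M^-$ just right for the method, is the passage $A\mapsto A_\sigma$: it converts the possibly degenerate matrices appearing in $M^-$ into strictly positive ones with a uniform lower bound on $\det(A_\sigma)^{1/n}$, which is precisely what allows the normalization to a unit-determinant matrix and hence lets us tap into the concavity of $\det^{1/n}$ on the positive cone.
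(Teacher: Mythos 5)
Your proof is correct and follows essentially the same route as the paper's: both exploit the formula $\det(B)^{1/n}=\tfrac1n\inf\{\Tr(AB):A\geq0,\ \det A=1\}$ for $B=D^2P(x)\geq0$, restrict the competitors to normalized matrices of the form $A_\sigma=A+\tfrac{\Tr(A)}{\sigma}\Id$, invoke Lemma \ref{lem:SingularIntegralTraceAHessionP} to identify $\Tr(A_\sigma D^2P)$ with the operators $L_A$ defining $M^-$, and use the lower bound $\det(A_\sigma)^{1/n}\gtrsim\lambda$ coming from $A_\sigma\geq\tfrac{\Tr(A)}{\sigma}\Id$ (Remark \ref{rem:ASigmaElliptic}). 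The only cosmetic difference is that you fix an admissible $A$ and infimize at the end, whereas the paper carries the infimum through a chain of set inclusions; the substance is identical.
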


\begin{proof}[Proof of Lemma \ref{lem:NonlocalDetMminusOrder}]

Thanks to the assumption of $D^2P(x) \geq 0$,\  we have 
\begin{align*}
\left(\D_\sig(\Gamma,x)\right)^{1/n}&\leq\inf_{\det(A)=1}\left\{\frac{1}{n}\T_A(\Gamma,x)\right\}\\
&\leq \inf_{\det(A_\sigma)=1,A\geq 0}\left\{\frac{1}{n}\T_{A_\sigma}(\Gamma,x)\right\} \\
&= \inf_{\lambda \Id \leq A_\sigma,A\geq 0}\left\{\frac{1}{n \;\det(A_\sigma)^{1/n}}\T_{A_\sigma}(\Gamma,x)\right\} \\
\end{align*}
Now, by definition (see (\ref{eq:ASigma})) of $A_\sigma$ in we have the set inclusion (see also Remark \ref{rem:ASigmaElliptic})
\begin{equation*}
\{A_\sigma :\;\; A\geq0 ,\;\; A_\sigma \geq \lambda\Id,A \}	 \supset \{ A_\sigma: \;\; A\geq 0,\;\;\Tr(A)\geq \sigma\lambda\}
\end{equation*}
Which together with the previous inequalities shows that
\begin{equation*}
\left(\D_\sig(\Gamma,x)\right)^{1/n}\leq \inf_{A\geq 0,\;\Tr(A)\geq \sigma\lambda}\left\{\frac{C(n)}{\lambda}A(n,2-\sigma)\int_{\real^n}\delta\Gamma(x,y)\frac{y^TAy}{\abs{y}^{n+\sigma+2}}dy\right\}
\end{equation*}
and the Lemma is proved.
\end{proof}

\begin{rem}
It is worth remarking that Lemma \ref{lem:NonlocalDetMminusOrder} is one of the three key reasons for a good definition of the envelope, $\Gam$.  In fact, this ordering between $\D_\sig$ and $M^-$ is not to be expected for a general function, $v$, which does not necessarily satisfy $E_\sig(v)\geq0$-- just as in the second order setting between the Monge-Amp\`ere and Minimal Pucci operators within the class of convex functions.
\end{rem}

The restriction to points $x$ where $D^2P(x) \geq 0$ is not too severe, as later on we will reduce to those $x$ for which $P$ coincides with its convex envelope, and there we shall always have $D^2P \geq 0$. Very much related to this fact is the following crucial Lemma which will also guarantee that we will only need to test the equation at points where $u=\Gamma$.

\begin{lem}[$\{\det(D^2P) > 0\}$ is contained in the contact set]\label{lem:NonlocalDetContactSet}
If for a.e. $x$, \[\ds\int_{\real^n}\frac{\abs{\del\Gamma(x,y)}}{\abs{y}^{n+\sig}}dy<\infty\] and $\Gamma \neq 0$, then for a.e. $x_0$ such that $\Gamma(x_0)\not=u(x_0)$, we can find some direction $\tau$ such that $P_{\tau\tau}(x_0) \leq 0$.  In particular,
\[
\left(\{\det(D^2P) > 0\}\intersect \{D^2P\geq0\}\right)\subset \{u=\Gamma\}.
\]
\end{lem}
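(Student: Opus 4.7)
The plan is to exploit the fact that $\Gamma$ is the solution to a $\sig$-order obstacle problem: wherever $\Gamma$ strictly lies below $u$, it must satisfy $E_\sig(\Gamma)=0$, since otherwise one could nudge $\Gamma$ upward locally and contradict the maximality in Definition~\ref{def:GammaU}. Combining this with the identity for $D^2P$ in terms of $h_\sig(\Gamma)$ and $\Laplace{\sig/2}\Gamma$ recorded in Remark~\ref{rem:DerivativesCompactFormula} will single out an eigendirection $\tau$ of $h_\sig(\Gamma,x_0)$ with zero eigenvalue, along which $P_{\tau\tau}(x_0)$ reduces to a fractional Laplacian term whose sign is controlled by the nonnegativity of $h_\sig(\Gamma,x_0)$.

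First I would invoke the obstacle problem theory for $\Gamma$ to conclude that at every point $x_0 \in B_3$ with $\Gamma(x_0) \neq u(x_0)$ the equation $E_\sig(\Gamma,x_0) = 0$ holds in the viscosity sense. By Proposition~\ref{lem:GammaRegularityByAbove} together with the hypothesis that $\int|\del\Gamma(x,y)||y|^{-n-\sig}dy < \infty$ a.e., the operator $h_\sig(\Gamma,\cdot)$ is classically defined a.e.\ in $B_3$, and this viscosity identity can be upgraded to a pointwise one on a full-measure subset of $\{\Gamma \neq u\}$. This is the step I expect to require the most care, since it is where one must match the abstract viscosity structure of the obstacle problem for $E_\sig$ to the pointwise regularity of $\Gamma$ granted by the Riesz-potential machinery of Section~\ref{sec:Pressure}.

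With this in hand, at such an $x_0$ I pick $\tau = \tau(x_0) \in S^{n-1}$ to be a unit eigenvector of $h_\sig(\Gamma,x_0)$ corresponding to the eigenvalue $E_\sig(\Gamma,x_0)=0$, so that $\inner{h_\sig(\Gamma,x_0)\tau}{\tau} = 0$. Applying Remark~\ref{rem:DerivativesCompactFormula} in this direction yields
\[
P_{\tau\tau}(x_0) \;=\; \inner{h_\sig(\Gamma,x_0)\tau}{\tau} + c_{n,\sig}\,\Laplace{\sig/2}\Gamma(x_0)\;=\; c_{n,\sig}\,\Laplace{\sig/2}\Gamma(x_0),
\]
with $c_{n,\sig}>0$. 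Because $E_\sig(\Gamma,x_0)=0$ is the \emph{smallest} eigenvalue of $h_\sig(\Gamma,x_0)$, the matrix $h_\sig(\Gamma,x_0)$ is positive semidefinite and hence $\Tr(h_\sig(\Gamma,x_0))\geq 0$. Comparing the definitions of $\Tr(h_\sig)$ and $\Laplace{\sig/2}$ in Section~\ref{subsec:Notation} gives the identity $\Tr(h_\sig(v,x)) = \tfrac{n+\sig}{\sig}\bigl(-\Laplace{\sig/2}v(x)\bigr)$, so $\Laplace{\sig/2}\Gamma(x_0) \leq 0$ and thus $P_{\tau\tau}(x_0) \leq 0$, proving the first assertion.

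The ``in particular'' statement then follows immediately: if in addition $D^2P(x_0) \geq 0$, then $P_{\tau\tau}(x_0) \geq 0$ as well, forcing $P_{\tau\tau}(x_0) = 0$, so $D^2P(x_0)$ is singular and $\det(D^2P(x_0)) = 0$. The claimed inclusion $\{\det(D^2P) > 0\}\cap\{D^2P \geq 0\} \subset \{u = \Gamma\}$ follows by contrapositive.
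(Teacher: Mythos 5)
Your argument is essentially the paper's own proof: identify $E_\sig(\Gamma,x_0)=0$ a.e.\ on $B_3\cap\{\Gamma\neq u\}$ from the obstacle problem, take $\tau$ in the kernel of $h_\sig(\Gamma,x_0)$, and use the identity $D^2P=h_\sig(\Gamma,\cdot)+\tfrac{1}{n+\sig}(-\Delta)^{\sig/2}\Gamma\,\Id$ together with $h_\sig(\Gamma,x_0)\geq0$ to get $P_{\tau\tau}(x_0)\leq0$; your bookkeeping of the constant relating $\Tr(h_\sig)$ to the fractional Laplacian is correct and the sign conclusion matches.

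There is one small but genuine gap in the final step. Your first assertion is established only for $x_0\in B_3$ (that is where the obstacle problem forces $E_\sig(\Gamma,x_0)=0$), so the contrapositive only yields $\{\det(D^2P)>0\}\cap\{D^2P\geq0\}\cap B_3\subset\{u=\Gamma\}$. Since $\{u\neq\Gamma\}$ can a priori meet $\real^n\setminus B_3$ (where $\Gamma\equiv 0$ and the equation $E_\sig(\Gamma)=0$ gives no information), you still need to rule out points of $\{D^2P\geq0\}$ outside $B_3$. The paper does this by invoking Corollary \ref{cor:D2PNonNegSet}, which says $\{D^2P\geq0\}\subset B_3$; this in turn follows from Lemma \ref{lem:PCantTouchPlane}, i.e.\ from $\Delta P=-\Laplace{\sig/2}\Gamma<0$ on $B_3^c$. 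Adding that one citation closes the argument.
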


\begin{proof}[Proof of Lemma \ref{lem:NonlocalDetContactSet}]
Lemma \ref{lem:GammaRegularityByAbove} plus standard techniques in viscosity solutions tell us that at almost every such $x_0$, $E_\sig(\Gamma,x_0)=0$.  Hence for each of these $x_0$ there exists a direction, \BlueComment{$\tau$},  such that
\begin{equation*}
(h_\sig)_{ij}(\Gamma,x_0)\tau_i\tau_j = 0 
\end{equation*}
Hence we can plug $\tau$ into the formula (\ref{eq:Pxixj})  for $P_{\tau\tau}$ and get (recall $\tau_i\tau_i = |\tau|^2=1$)

\begin{equation*}
P_{\tau\tau} =(h_\sig(\Gamma,x_0))_{ij}\tau_i\tau_j- C_{n,\sig}\int_{\real^n} \frac{\del \Gamma(x_0,y) }{\abs{y}^{n+\sig}}dy=- C_{n,\sig}\int_{\real^n} \frac{\del \Gamma(x_0,y) }{\abs{y}^{n+\sig}}dy\leq 0.
\end{equation*}
In this last inequality we have used the fact that $\Gamma$ solves $E_\sig(\Gamma)\geq0$ everywhere, and so because $-\Laplace{\sig/2}\Gamma = \Tr(h_\sig(\Gamma))$, we have the correct sign for the leftover term.
This concludes the first assertion of the lemma.

To finish with the set inclusion, we note from the Corollary \ref{cor:D2PNonNegSet}, that $\{D^2P\geq0\}\subset B_3$, and the fact that in $B_3\intersect \{u\not=\Gamma\}$, $\Gamma$ solves $E_\sig(\Gamma,x)=0$ (see (\ref{eq:AppendixObstacleViscosityEq})).  The set inclusion then follows by the first assertion of the lemma.
\end{proof}
\section{Proof of The Main Theorem}\label{sec:Proof}
\setcounter{equation}{0}

We are now in a position to prove the main theorem, Theorem \ref{thm:Main}.  The main step left to prove is an estimate on $\inf(P)$ which involves a measure theoretic norm of $f$.  This will come from the geometric set-up of the second order ABP estimate for $P$, thanks to the various results of section \ref{sec:DetD2P}.  

\begin{proof}[Proof of Theorem \ref{thm:Main}]
The theorem will first be proved under the assumption that $u$ is $C^{1,1}$ from above, and then we will remove this restriction at the end of the proof using the standard method of inf-convolution of $u$.  Therefore, assume $u$ is $C^{1,1}$ from above.

As mentioned in the lead-up to this proof, we will be applying the Aleksandrov Estimate for convex functions to the convex envelope of $P$, but a crucial factor to be determined is the domain in which the argument will be applied.  We must work with an appropriately large radius according to Lemma \ref{lem:PotenialOscillation} in order to make sure that the difference of the values of $P$ on $\partial B_{R_\sig}$ and $\inf (P)$ are still comparable to $\inf_{\real^n} (P)$.  Therefore we are working in $B_{R_\sig}$ from Lemma \ref{lem:PotenialOscillation}.

Let $P^{CE}$ be the convex envelope of $P$ in $B_{R_\sig}$.  It is essential to note a very important consequence of Lemma \ref{lem:PCantTouchPlane} is that the contact set, $\{x:P(x)=P^{CE}(x)\}$, is contained within $B_3$.  Furthermore, since $P\in C^{1,1}_{loc}(B_3)$, we know that $P^{CE}\in C^{1,1}_{loc}(B_{R_\sig})$ (see \cite[Chapter 3]{CaCa-95}).  The Aleksandrov Estimate for convex functions (see \cite[Chapter 3]{CaCa-95}) then implies
\begin{align*}
-\inf_{B_{R_\sig}}\{P\} &\leq \left(-\inf_{\partial B_{R_\sig}}\{P\}\right) + C(n)\left(\int_{\{P=P^{CE}\}} \det(D^2P^{CE}(x))dx\right)^{1/n}\\
&\leq \left(-\inf_{\partial B_{R_\sig}}\{P\}\right) + C(n)\left(\int_{\{P=P^{CE}\}} \det(D^2P(x))dx\right)^{1/n}\\
&= \left(-\inf_{\partial B_{R_\sig}}\{P\}\right) + C(n)\left(\int_{\{P=P^{CE}\}} \D_\sig(\Gamma,x)dx\right)^{1/n}.
\end{align*}
\noindent
We note that the constant $C(n)$ in fact depends upon $R_\sig$, but it is uniformly bounded from below and above for our $R_\sig$, and so we simply denote it as a dimensional constant.

Thanks to the comparison between $\D_\sig$ and $M^-$, Lemmas \ref{lem:NonlocalDetMminusOrder} and \ref{lem:NonlocalDetContactSet}, and the comparison principle between $\Gamma$ and $u$ for $M^-$ on $K_u$ all combine to give
\begin{align}
-\inf_{B_{R_\sig}}\{P\} &\leq \left(-\inf_{\partial B_{R_\sig}}\{P\}\right) + C(n)\left(\int_{K_u} \D_\sig(\Gamma,x)dx\right)^{1/n}\nonumber \\
&\leq\left(-\inf_{\partial B_{R_\sig}}\{P\}\right) + \frac{C(n)}{\lam}\left(\int_{K_u}\left(M^-(\Gamma,x)\right)^{n} dx\right)^{1/n}\nonumber\\
&\leq\left(-\inf_{\partial B_{R_\sig}}\{P\}\right) + \frac{C(n)}{\lam}\left(\int_{K_u}\left(M^-(u,x)\right)^n dx\right)^{1/n}\nonumber\\
&\leq\left(-\inf_{\partial B_{R_\sig}}\{P\}\right) + \frac{C(n)}{\lam}\left(\int_{K_u}(f(x))^n dx\right)^{1/n}.\label{eq:ProofMainPestimate}
\end{align}

\noindent
We note that we have also used the $C^{1,1}$ from above nature of $u$ to evaluate (\ref{eq:PIDEmain}) classically.

In order to conclude the theorem in the case that $u$ is $C^{1,1}$ from above we must now recall how $\inf(P)$ relates to $\inf(\Gamma)$.  We can now combine the results of Proposition \ref{prop:PointToMeasureSIGMA} and Lemma \ref{lem:PotenialOscillation} with (\ref{eq:ProofMainPestimate}) to obtain (with obvious abuse of the dimensional constant, $C(n)$):
\begin{align*}
C(n,\lambda)(-\Gamma(x_0))^{2/\sig}\left(\frac{1}{2f(x_0)}\right)^{(2-\sig)/\sig}\leq \frac{C(n)}{\lam}\left(\int_{K_u}f^ndx\right)^{1/n},
\end{align*}
which after rearranging gives 
\begin{equation*}
-\inf\{\Gamma\}\leq \frac{C(n)}{\lam}(\norm{f}_{L^\infty(K_u)})^{(2-\sig)/2}(\norm{f}_{L^n(K_u)})^{\sig/2}.
\end{equation*}

Now to finish the theorem for a generic $u$, we work with the inf-convolution of $u$,
\begin{equation*}
u_\ep(x)=\inf_{\abs{x-y}\leq\sqrt{\ep}\norm{u}_\infty}\{u(y)+\frac{1}{\ep}\abs{x-y}^2\}
\end{equation*}
\noindent
(see \cite[Appendix A]{CaCrKoSw-96}, \cite[Appendix]{CrIsLi-92}, \cite[Equations (14), (15)]{JeLiSo-88UniquenessSecondOrder}, or \cite{LaLi-1986RegularizationHilbertSpace} for definitions and \cite[Appendix A]{CaCrKoSw-96} for the analogous argument regarding the ABP in the second order setting)
The main properties we will use are that $u_\ep$ is $C^{1,1}$ from above in $B_3$, that $u_\ep$ is \emph{increasing to $u$}, and that $u_\ep$ solves the equation (\ref{eq:PIDEmain}) with $f$ replaced by $f^\ep$ (see \cite[Lemma A.3]{CaCrKoSw-96} and \cite[Proposition 5.5]{CaSi-09RegularityIntegroDiff}) given as
\begin{equation*}
f^\ep(x)=\sup_{\abs{x-y}\leq\sqrt{\ep}\norm{u}_\infty}\{f(y)\}.
\end{equation*}
For $\Gamma_\ep$, $P_\ep$, and $K_{u_\ep}$ denoting all the corresponding operations using $u_\ep$ in place of $u$, (\ref{eq:ProofMainPestimate}) becomes
\begin{equation*}
-\inf_{\real^n}\{P_\ep\}\leq \frac{C(n)}{\lam}\left(\int_{K_{u_\ep}}(f^\ep)^ndx\right)^{1/n}.
\end{equation*}
So long as $\limsup_\ep f^\ep(x)\leq f(x)$, which is given by continuity of $f$ in this case,  we can conclude the estimate by letting $\ep\to0$ and using Lemma \ref{lem:ContactSetsLimsup}. 
\end{proof}

\begin{rem}
For future reference, we would like to collect a very important fact which is a cornerstone of the proof of Theorem \ref{thm:Main}.  It is the relationship between $\inf(\Gamma)$ and the integral of $\D_\sig(\Gamma)$,
\begin{equation}\label{eq:GammaDsigmaBound}
-\inf_{\real^n}(\Gamma)\leq \frac{C(n)}{\lam} \left(\int_{K_u}\D_\sig(\Gamma,x)dx\right)^{1/n},
\end{equation}
which holds whenever \[\ds\int_{\real^n}\frac{\abs{\del\Gamma(x,y)}}{\abs{y}^{n+\sig}}dy<\infty\]
for a.e. $x$.
\end{rem}

\section{Fractional Monge-Amp\`ere Type Operators}\label{sec:MongeAmpere}
\setcounter{equation}{0}

The Monge-Amp\`ere type operator $\D_\sig(\Gamma,x)$ is crucial to our
proof of Theorem \ref{thm:Main}, as it allows us to borrow the
divergence structure of the standard Monge-Amp\`ere which is essential
for the classical ABP Theorem. One may take the point of view that
$T_u(x)=\nabla P_u$ is a nonlocal gradient map given by $u$, and that
$\D_\sigma$ is just the Jacobian of the map $T_u$. Then, part of the
effort in our proof (Section \ref{sec:Pressure}) has been relating the
infimum of $\Gamma$ to the size of the image of the map $T_\Gamma(x)$.

We insist in referring to $\D_\sigma$ as a ``Monge-Amp\`ere type
operator" and not just the ``Monge-Amp\`ere operator", as it is not
clear that $\D_\sigma$ would be a definite, canonical analogue of this
operator for integro-differential equations. Note also that as $P_u$
is not necessarily convex the map $\nabla P_u$ might not be monotone,
and in fact the operator $\D_\sigma(u)$ might fail to be elliptic even
on the class of functions satisfying $E_\sigma\geq 0$ (for the
potential $P$ might not be convex). Another, perhaps more natural
candidate for an ``integro-differential Monge-Amp\`ere'' is given by
the operator
\begin{equation}\label{eqn:AlternateMongeAmpere}
\D^*_\sigma(u,x):= \left ( \inf \limits_{A\geq 0,\\ \det
A=1}\int_{\real^n}\frac{C(n)(2-\sigma)}{|A^{-1}y|^{n+\sigma}}\delta
u(x,y)dy \right )^{n_\sigma}
\end{equation}

Where $n_\sigma$ may be chosen to be $n_\sig\equiv n$ or at least such that
$n_\sigma \to n$ as $\sigma \to 2$. Unlike $\D_\sigma$, this second
operator is affine invariant and it can be checked easily that it is
well defined and (degenerate) elliptic in the class of functions which
are subsolutions of $E_\sigma = 0$. The main disadvantage of $\D^*_\sig$ with respect
to $\D_\sigma$ is that it is hard to relate directly the size
$\D^*_\sigma \Gamma $ to the infimum of $\Gamma$.  This relationship could be quantified if
$\D^*_\sigma$ had a connection to some sort of gradient map, which is (informally speaking) 
how the divergence structure of Monge-Amp\`ere contributes to the classical ABP 
(recall $\D_\sigma =\det (DT_u)$). 

The operator $\D^*_\sigma$ as defined in
(\ref{eqn:AlternateMongeAmpere}) would be a natural operator to
consider in the class of equations given by
\begin{equation}\label{eqn:AlternateLinear}
L_A(u,x)=\int_{\real^n}\frac{C(n)(2-\sigma)}{\det(A)|A^{-1}y|^{n+\sigma}}\delta
u(x,y)dy\;\;\;\text{ where } A^t=A,\;\lambda \text{I}\leq A^2 \leq
\Lambda \text{I}
\end{equation}

A posteriori, one can see the reasons for the proof of Theorem
\ref{thm:Main} being restricted to a much smaller class of equations
than those appearing in \cite{CaSi-09RegularityIntegroDiff}. There must
be a balance to ensure each one of Lemmas \ref{lem:NonlocalDetMminusOrder},
\ref{lem:NonlocalDetContactSet}, and
(\ref{eq:GammaDsigmaBound}) hold.  In principle, one could
create a new notion of determinant altogether, such that those three
meta-lemmas would still be valid for much richer families of kernels. The difficulty comes from the fact that
considering a larger family of kernels makes the operator $M^-$ more
extremal, and thus the question of finding a ``geometric'' operator
that is comparable $M^-$ becomes much harder to tackle.  In
conclusion, all these issues underline the lack of geometric equations
for integro-differential operators.
\section{Important Limits As $\sig\to2$}\label{sec:ImportantLimitSigmaTo2}
\setcounter{equation}{0}

In the series of works, \cite{CaSi-09EvansKrylov}, \cite{CaSi-09RegularityByApproximation}, and \cite{CaSi-09RegularityIntegroDiff}, all of the results were obtained in a fashion in which they are preserved as $\sig\to2$ and recover the corresponding results already proved for second order equations.  This has led to a unified picture of the second order and the fractional order (i.e. nonlocal) theories.  We adopt this view in the current work, and take this section to discuss explicitly how our result relates to the relevant second order theory. 

\subsection{Recovery of A Second Order Envelope as $\sig\to2$}
One may expect that as $\sig \to 2$ the envelopes $\Gamma_\sig$ should
behave more and more like the convex envelope of $u$. This is almost the case at least whenever $u$ is
$C^{1,1}$ from above, the discrepancy arises only because of the behavior of $E_\sig$ as $\sig \to 2$ (see Section \ref{sec:Envelope}).

\begin{prop}\label{prop:UniformConvergenceEnvelopes}
Assume $u$ is $C^{1,1}$ from above, then as $\sig\to 2$ the envelopes
$\Gamma_\sigma$
converge uniformly to a function $\Gamma_2$ which solves the obstacle problem
\begin{equation*}
\min \left \{ u(x)-\Gamma_2(x), L(\Gamma_2,x) \right \} = 0\;\; \forall\;x \in B_3,\;\; \Gamma_2(x) = 0 \;\; \forall\; x \in \partial B_3.
\end{equation*}
where $L(v,x)$ is the second order fully non-linear elliptic operator defined by
\begin{equation*}
L(v,x):= \lambda_1(D^2v(x))+\tfrac{1}{2}\Delta v(x).	
\end{equation*}
In particular, $\Gamma_0$ lies in between $u$ and the convex envelope of $u$ in $B_3$.
\end{prop}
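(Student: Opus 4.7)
The plan is to pass to the limit $\sigma \to 2^-$ in the obstacle problem solved by $\Gamma_\sigma$ via the half-relaxed-limit machinery of Barles--Perthame, combined with a comparison principle for the limit operator. Recall that $\Gamma_\sigma$ solves, in the viscosity sense,
\begin{equation*}
\min\{u - \Gamma_\sigma,\; E_\sigma(\Gamma_\sigma)\} = 0 \text{ in } B_3,\qquad \Gamma_\sigma \equiv 0 \text{ on } \real^n \setminus B_3.
\end{equation*}
Since $-\|u\|_\infty \leq \Gamma_\sigma \leq u$ uniformly in $\sigma$, the relaxed upper and lower limits
\begin{equation*}
\overline{\Gamma}(x):=\limsup_{\sigma \to 2,\,y \to x}\Gamma_\sigma(y),\qquad \underline{\Gamma}(x):=\liminf_{\sigma\to 2,\,y\to x}\Gamma_\sigma(y)
\end{equation*}
are bounded and satisfy $\underline{\Gamma}\leq \overline{\Gamma}$. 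If they solve the limit obstacle problem in the appropriate viscosity sense, a comparison principle will force them to coincide, giving locally uniform convergence.

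\textbf{Consistency and stability.} The key consistency statement is that for every $\phi \in C^2(\real^n) \cap L^\infty$ and every $x_0 \in B_3$,
\begin{equation*}
E_\sigma(\phi,x_0) \longrightarrow L(\phi, x_0)\quad \text{as } \sigma \to 2^-.
\end{equation*}
This follows by splitting the integral in $h_\sigma(\phi, x_0)$ into a near-zero part, which by a Taylor expansion of $\phi$ produces the pointwise limit identified in Remark~\ref{rem:EsigSigTo2}, and a tail part, which vanishes because the normalizing constant $A(n,2-\sigma)$ behaves like $2-\sigma$ as $\sigma\to 2$ while $\int_{|y|\geq 1}|y|^{-n-\sigma}\,dy$ stays bounded. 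Continuity of $\lambda_1$ as a function of the symmetric matrix $h_\sigma(\phi,x_0)$ then passes the limit through. With this consistency in place, the standard test-function perturbation argument for viscosity stability shows that $\overline{\Gamma}$ is a viscosity subsolution and $\underline{\Gamma}$ a viscosity supersolution of the limit obstacle problem $\min\{u - v,\; L(v,x)\} = 0$ in $B_3$, with $v \equiv 0$ on $\real^n \setminus B_3$.

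\textbf{Comparison, uniqueness and the convex-envelope bound.} The limit operator $L$ is concave in $D^2 v$, degenerate elliptic and proper, so the comparison principle of Crandall--Ishii--Lions \cite{CrIsLi-92} (adapted to the obstacle form, as in \cite[Chapter 2]{CaCa-95}) applies and yields $\overline{\Gamma}\leq \underline{\Gamma}$. Combined with the trivial reverse inequality, $\underline{\Gamma} = \overline{\Gamma} =: \Gamma_2$ is continuous, and $\Gamma_\sigma \to \Gamma_2$ locally uniformly on $\real^n$. Uniqueness forces the entire family (not only a subsequence) to converge. For the final assertion, the convex envelope $u^{CE}$ of $u$ in $B_3$ satisfies $D^2 u^{CE} \geq 0$ in the viscosity sense, hence $L(u^{CE},x) \geq 0$; since $u^{CE}\leq u$ in $B_3$ and $u^{CE}\equiv 0$ outside, $u^{CE}$ is an admissible competitor for the variational definition of $\Gamma_2$ as the largest subsolution of $L(v,x)\geq 0$ lying below $u$, so $\Gamma_2 \geq u^{CE}$.

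\textbf{Main obstacle.} The most delicate point is the consistency step in the nonlocal-to-local transition: the viscosity inequality for $E_\sigma$ involves values of the test function over all of $\real^n$, so one must either arrange for test functions to agree with $\Gamma_\sigma$ outside a small ball or control the far-field contribution to $E_\sigma$ uniformly in $\sigma$. The hypothesis that $u$ is $C^{1,1}$ from above, combined with Proposition~\ref{lem:GammaRegularityByAbove}(ii), furnishes the uniform tail bound $\int |\delta \Gamma_\sigma(x,y)||y|^{-n-\sigma}\,dy \leq C$ that makes this closure of the stability argument possible; once that uniform tail estimate is in hand, the decay $A(n,2-\sigma) \to 0$ kills the far-field contributions, and everything reduces to a local computation.
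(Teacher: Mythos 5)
Your proposal is correct in outline but takes a genuinely different route from the paper's. The paper argues by compactness: the $C^{1,1}$-from-above hypothesis together with Proposition \ref{lem:GammaRegularityByAbove} gives the uniform two-sided bound $0\le -\Laplace{\sig/2}\Gamma_\sig\le C$, hence (via the Poisson kernel for $\Laplace{\sig/2}$ in $B_3$) uniform H\"older continuity of the family $\{\Gamma_\sig\}$ up to the boundary; Arzel\`a--Ascoli produces uniformly convergent subsequences, stability under uniform limits identifies every subsequential limit with the unique solution of the limit obstacle problem, and the convex-envelope lower bound is obtained by running the whole argument in parallel for the auxiliary envelopes $\Gamma^*_\sig$ of Remark \ref{rem:EsigStar}, which satisfy $\Gamma^*_\sig\le\Gamma_\sig$ and converge to the convex envelope. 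You instead use half-relaxed limits plus comparison, trading the equicontinuity estimate for a comparison principle, and you replace the $E^*_\sig$ detour by a direct competitor argument ($L(u^{CE},x)\ge 0$ in the viscosity sense because $u^{CE}$ is convex, and $u^{CE}$ is admissible in the sup characterization of the limit problem), which is cleaner. Two points you should shore up: first, in the Barles--Perthame scheme you must also verify $\underline{\Gamma}\ge 0$ on $\partial B_3$ before invoking comparison, which requires a lower barrier for $E_\sig\ge 0$ near $\partial B_3$ that is uniform in $\sig$ close to $2$ (the paper's route gets the boundary behavior for free from the global H\"older estimate); second, the consistency constant: Remark \ref{rem:EsigSigTo2} as printed would give $E_\sig(\phi,x)\to\lambda_1(D^2\phi(x))+\tfrac{1}{n+2}\Delta\phi(x)$, not the operator $L$ with coefficient $\tfrac12$ appearing in the statement --- the correct identity is $h_\sig(v,x)=D^2(v*K_{2-\sig})+\tfrac{1}{\sig}\Delta(v*K_{2-\sig})\,\Id$, as used in Section 7.3, which does yield the stated $L$. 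Finally, your appeal to Proposition \ref{lem:GammaRegularityByAbove}(ii) for the far-field control is more than is needed: since $A(n,2-\sig)\sim 2-\sig\to 0$, the uniform $L^\infty$ bound on $\Gamma_\sig$ already forces the contribution of $\{|y|\ge r\}$ to $E_\sig$ to vanish as $\sig\to 2$ for each fixed $r>0$.
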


\begin{proof}[Proof of Proposition \ref{prop:UniformConvergenceEnvelopes}]
Here we will use the operator, $E_\sig^*$, which was introduced in Remark \ref{rem:EsigStar}.  Let $\Gamma_\sig$ and $\Gamma_\sig^*$ be respectively the envelopes of $u$ made using Definition \ref{def:GammaU} with respectively the operators $E_\sig$ and $E_\sig^*$.  We note here that because $E_\sig^*\Gamma_\sig^*\geq0$ in $B_3$, then it also holds that $E_\sig\Gamma_\sig^*\geq0$ in $B_3$ as well.  Moreover, $\Gamma_\sig^*$ satisfies $\Gamma_\sig^*\leq u$ by construction.  Hence because $\Gamma_\sig$ is the supremum of such functions, we see that
\begin{equation*}
\Gamma_\sig^*\leq \Gamma_\sig.
\end{equation*}

\
Due to the $C^{1,1}$ assumption on $u$ we can use Lemma
\ref{lem:GammaRegularityByAbove} to conclude that both families, $\{\Gamma_\sig\}_\sig$ and $\{\Gamma_\sig^*\}_\sig$ are
uniformly $C^{1,1}$ from above. In particular there exists a constant
$C>0$ independent of $\sigma$ such that
\begin{equation*}
-\Laplace{\sig/2}\Gamma_\sig \leq C\ \ \text{and}\ \ -\Laplace{\sig/2}\Gamma_\sig^* \leq C
\end{equation*}
Since we also have by construction both 
\begin{equation*}
-\Laplace{\sig/2}\Gamma_\sig \geq 0\ \ \text{and}\ \ -\Laplace{\sig/2}\Gamma_\sig^* \geq 0,
\end{equation*}
we conclude that  both $\|\Laplace{\sig/2}\Gamma_\sig \|_{L^\infty(B_3)}\leq
C$ and $\|\Laplace{\sig/2}\Gamma_\sig^* \|_{L^\infty(B_3)}\leq
C$.   Since each $\Gamma_\sig$ and $\Gamma_\sig^*$ are identically zero outside $B_3$ we may
use (for instance) the Poisson kernel for $\Laplace{\sig}$ in
\cite{Land-72} to conclude both $\Gamma_\sig$ and $\Gamma_\sig^*$ are H\"older continuous in
$\real^n$ and uniformly in $\sig$. 

Therefore, from each sequence
$\sig_k \to 2$ we may select a subsequence that converges uniformly in
$\real^n$ to some functions $\Gamma_2$ and $\Gamma_2^*$.  

We note, for any smooth $v$ with enough decay at infinity, the convergence of $E_\sig(v,x)$ as $\sig \to 2$ to $\lambda_1(D^2v(x))+\frac{1}{2}\Delta v(x)$ (see Remark \ref{rem:DerivativesCompactFormula} and Proposition \ref{prop:LimitSigmaTo2HessionGamma}) and the convergence of $E_\sig^*(v,x)$ to $\lam_1(D^2v(x))$.  Thus, by the stability of viscosity solutions under uniform limits, we conclude $\Gamma_2$ and $\Gamma_2^*$ are the unique viscosity  solutions to their respective limiting obstacle problems, and hence all subsequential limits converge to the same function.  But in the limit $\Gamma_2^*$ is the convex envelope of $u$, which concludes the proposition.
\end{proof}

We proved the proposition assuming $u$ is $C^{1,1}$ from above for the sake of simplicity, but as it is reasonable to expect the envelopes $\Gamma_s$ to be somewhat regular regardless of $u$, it is likely this convergence holds under much more general circumstances.

\subsection{Derivatives of $P$ Converge To Derivatives of $v$}
We now revisit the peculiar form of (\ref{eq:Pxixj}) and (\ref{eq:TraceD2POnGamma}) mentioned in Remark \ref{rem:DsigmaPeculiar}.  We give further justification of formula (\ref{eq:Pxixj}) by checking
with a direct computation that when $P:= (-\Delta)^{-(2-\sig)/2}v$ then as $\sigma \to 2^-$ it gives back the
Hessian of $v$, assuming $v$ is smooth enough. This is not surprising since $P$, being defined as $(-\Delta)^{-(2-\sig)/2}v$ converges to $v$ uniformly as
$\sigma \to 2^-$, so if $v$ is smooth enough we can already conclude $D^2P \to D^2v$, in this regard the proposition below is only complementary.

\begin{prop}\label{prop:LimitSigmaTo2HessionGamma}
If $v$ is a fixed function which is $C^{1,1}$ from above in $B_3$ and $P$ is its Riesz potential (\ref{eq:Pressure}), then
\begin{equation*}
\lim \limits_{\sigma \to 2^-}   D^2P(x) = D^2v(x), \;\;\forall\;x\in B_3.
\end{equation*}
\end{prop}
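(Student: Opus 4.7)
The plan is to pass to the limit $\sigma\to 2^-$ directly in the integral representation (\ref{eq:Pxixj}) of $D^2P(x)$, splitting the integration domain into a small ball $B_r(0)$ and its complement. Setting
\[
C_{n,\sigma} := \tfrac{(n+\sigma-2)(n+\sigma)}{2}\,A(n,2-\sigma), \qquad \Phi_\sigma(y) := \frac{y\otimes y}{|y|^{n+\sigma+2}} - \frac{\Id}{(n+\sigma)\,|y|^{n+\sigma}},
\]
formula (\ref{eq:Pxixj}) reads $D^2P(x) = C_{n,\sigma}\int_{\real^n}\delta v(x,y)\,\Phi_\sigma(y)\,dy$. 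Since $A(n,2-\sigma)\sim(2-\sigma)$ as $\sigma\to 2^-$ (cf.\ Section \ref{subsec:Notation}), the prefactor $C_{n,\sigma}$ itself tends to $0$, so any finite limit must arise from cancellation against the $|y|^{-n-\sigma}$ singularity of $\Phi_\sigma$ inside $B_r$.

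On $B_r^c$, I would dominate the integrand crudely by $|\delta v(x,y)|\leq 4\|v\|_{L^\infty}$ and $|\Phi_\sigma(y)|\lesssim |y|^{-n-\sigma}$ to control this contribution by a constant multiple of $C_{n,\sigma}\|v\|_{L^\infty}r^{-\sigma}$, which vanishes as $\sigma\to 2^-$ for any fixed $r>0$. On $B_r$, use that $v$, being $C^{1,1}$ from above in $B_3$, is semi-concave and therefore twice differentiable at a.e.\ $x$ by Aleksandrov's theorem; at such an $x$ the Taylor expansion $\delta v(x,y) = y^T D^2v(x)\,y + \rho(y)$ holds with $|\rho(y)|\leq \varepsilon|y|^2$ on $B_r$ for $r$ small enough. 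Substituting splits the near-field integral into a principal term and a remainder; the remainder is bounded by $\varepsilon\, C_{n,\sigma}\,\omega_n\, r^{2-\sigma}/(2-\sigma)$, which stays uniformly below a constant times $\varepsilon$ as $\sigma\to 2^-$, since $A(n,2-\sigma)/(2-\sigma)$ is bounded.

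The principal term $C_{n,\sigma}\int_{B_r}y^T D^2v(x)\,y\,\Phi_\sigma(y)\,dy$ is then evaluated in polar coordinates using the standard second- and fourth-order angular moments $\int_{S^{n-1}}\omega_i\omega_j\,dS$ and $\int_{S^{n-1}}\omega_i\omega_j\omega_k\omega_l\,dS$; the counter-term $-\Id/((n+\sigma)|y|^{n+\sigma})$ in $\Phi_\sigma$ is placed precisely so that the isotropic trace piece cancels and the matrix $D^2v(x)$ survives after the radial factor $r^{2-\sigma}/(2-\sigma)$ balances against $A(n,2-\sigma)$ inside $C_{n,\sigma}$. Sending first $\sigma\to 2^-$ and then $r\to 0$ (equivalently $\varepsilon\to 0$) yields the claim. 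The main obstacle is the dimensional bookkeeping of the competing limits $C_{n,\sigma}\to 0$ and $\int_{B_r}|y|^{-n-\sigma}dy\to\infty$: one must verify that the product of the resulting constants equals exactly $1$, which follows from the asymptotic $A(n,\alpha)\sim \alpha$ together with the Gamma-function identities for the angular moments on $S^{n-1}$. A useful sanity check is that $P=(-\Delta)^{-(2-\sigma)/2}v \to v$ as the Riesz-potential order $(2-\sigma)/2\to 0^+$, so when $v$ is genuinely $C^2$ near $x$ the convergence $D^2P(x)\to D^2v(x)$ is automatic; the explicit computation merely verifies that formula (\ref{eq:Pxixj}) is consistent with this intuition.
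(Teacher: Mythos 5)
Your proposal is correct and follows essentially the same route as the paper's proof: split the integral in (\ref{eq:Pxixj}) at a small ball, kill the far-field contribution using $A(n,2-\sigma)\sim 2-\sigma$, insert the second-order expansion of $\delta v(x,\cdot)$ in the near field, and evaluate the surviving angular moments on $S^{n-1}$ so that the counter-term $-\Id/((n+\sigma)|y|^{n+\sigma})$ cancels exactly and $D^2v(x)$ remains. Your appeal to Aleksandrov's theorem to justify the pointwise second-order expansion is in fact slightly more careful than the paper (which simply asserts the expansion at $x$, yielding the conclusion only where that expansion holds), but otherwise the two arguments coincide.
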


\begin{proof}[Proof of Proposition \ref{prop:LimitSigmaTo2HessionGamma}]
The proof has two broad steps.

Step 1) We shall compute a formula for $\lim
\limits_{\sig\to2^-}P_{x_ix_j}$. To do this, we estimate the integral on
the right hand side of (\ref{eq:Pxixj}) by breaking it in two parts.
Fix $\epsilon>0$, observe that
\begin{equation*}
\left |(2-\sigma) \int_{B_\epsilon^C}\del
v(x,y)\left[\frac{y_iy_j}{\abs{y}^{n+\sig+2}}-\frac{\del_{ij}}{(n+\sig)\abs{y}^{n+\sig}}\right]dy  \right
|
\end{equation*}
\begin{equation*}
\leq \|v\|_\infty (2-\sigma) C_n
\int_{B_\epsilon^C}\frac{1}{|y|^{n+\sigma}}dy=\|v\|_\infty(2-\sigma)C_n\int_\epsilon^{+\infty}\frac{\omega_n}{t^{1+\sigma}}dt
\end{equation*}

The right hand side vanishes as $\sigma \to 2$ (for $\epsilon>0$ fixed), thus
\begin{equation}\label{eq:LimitPxixjA}
\lim \limits_{\sig\to 2^-}P_{xixj}=     \frac{n(n+2)}{2}\lim
\limits_{\sig\to 2^-}\int_{B_\epsilon(0)} A(n,2-\sig) \frac{\del
v(x,y)}{|y|^{n+\sigma}}
\left[\frac{y_iy_j}{\abs{y}^2}-\frac{\del_{ij}}{(n+\sig)}\right]dy
\end{equation}
This holds for all $\epsilon>0$ (which makes clear we are getting a
local operator). To estimate the integral inside $B_\epsilon$ recall
that $v$ is $C^{1,1}(x)$, which means that $\nabla v(x)$ and
$D^2v(x)$ exist in the sense that as $|y|\to 0$
\begin{equation*}
v(x+y)=v(x)+\nabla v (x) \cdot y +\frac{1}{2}y^T
(D^2v(x))y + o(|y|^2)
\end{equation*}
In particular, setting $\hat{y}=y/\abs{y}$
\begin{equation*}
\frac{\del v(x,y)}{|y|^2}= \hat{y}^T(D^2v(x))\hat{y}+o(1)
\;\;,\; \abs{y}\to 0
\end{equation*}
Using this expansion in (\ref{eq:LimitPxixjA}) we get
further\footnote{also recall that $A(n,2-\sig)/(2-\sig) \to
\omega_n^{-1}$ as $\sig\to2^-$}
\begin{equation*}
\lim\limits_{\sigma\to2^-} P_{x_ix_j} = \frac{n(n+2)}{2\omega_n}\lim
\limits_{\sig\to2^-} \left
\{(2-\sig)\int_{B_\epsilon}\frac{\hat{y}^T(D^2v(x))\hat{y}}{\abs{y}^{n-2+\sig}}\left[
\hat{y}_i\hat{y}_j -\frac{\del_{ij}}{n+\sig}\right]dy\right \}
\end{equation*}
We may also use polar coordinates to see that
\begin{equation*}
\int_{B_\epsilon}\frac{\hat{y}^T(D^2v(x))\hat{y}}{\abs{y}^{n-2+\sig}}\left[
\hat{y}_i\hat{y}_j -\frac{\del_{ij}}{n+\sig}\right]dy=
       \int_0^\epsilon \int_{S^{n-1}}e^T(D^2v(x))e\left[ e_ie_j
-\frac{\del_{ij}}{n+\sig}\right]\frac{t^{n-1}}{t^{n-2+\sigma}}dS(e)dt
\end{equation*}
\begin{equation*}
=\frac{1}{2-\sig}\epsilon^{2-\sig}\int_{S^{n-1}}e^T(D^2v(x))e\left[
e_ie_j -\frac{\del_{ij}}{n+\sig}\right]dS(e)
\end{equation*}
This gives us the formula
\begin{equation}\label{eq:LimitPxixjB}
\lim \limits_{\sigma \to 2^-} P_{x_ix_j} =
\frac{n(n+2)}{2\omega_n}\int_{S^{n-1}}\left
(e_ie_j-\frac{1}{n+2}\delta_{ij} \right )\; e^T(D^2v(x))e\;dS(e)
\end{equation}
Step 2) All there is left to show is that the expression on the right
in (\ref{eq:LimitPxixjB}) always gives back the $ij$ entry of
$D^2v(x)$. By rotation invariance we may assume without loss of
generality that $D^2v(x)$ is diagonal, in which case we have
$e^T(D^2v(x))e= \sum \limits_{l=1}^n v_{ll}(x)e_l^2$ for all
$e$, in other words:
\begin{equation*}
\lim \limits_{\sigma \to 2^-} P_{x_ix_j} =
\frac{n(n+2)}{2\omega_n} \	int_{S^{n-1}}\left
(e_ie_j-\frac{\delta_{ij}}{n+2} \right )e_l^2\;dS(e)v_{x_lx_l}(x)
\end{equation*}
Note that by reflection symmetry the integral of $e_ie_je_l^2$ must be
zero for all $l$ when $i\neq j$, given that $D^2v(x)$ is diagonal
this means  $\lim \limits_{\sig\to2^-}P_{x_ix_j}=0=v_{x_ix_j}(x)$
for $i \neq j$. We are left to consider the case $i=j$, to fix ideas
we do it for $i=1$. Then
\begin{equation*}
\lim \limits_{\sigma \to 2^-} P_{x_1x_1} =
\frac{n(n+2)}{2\omega_n} \sum\limits_{l=1}^n \int_{S^{n-1}}\left
(e_1^2-\frac{1}{n+2} \right )e_l^2\;dS(e)v_{x_lx_l}(x)
\end{equation*}
Due to rotation invariance this can be rewritten as
\begin{equation*}
\frac{n(n+2)}{2\omega_n}\left [ \int_{S^{n-1}}\left
(e_1^4-\frac{e_1^2}{n+2} \right )\;dS(e)v_{x_1x_1}(x)+\left (
\int_{S^{n-1}} \left (e_1^2e_2^2-\frac{e_2^2}{n+2} \right)\;dS(e)
\right ) \sum\limits_{l=2}^n v_{x_lx_l}(x)\right ]
\end{equation*}
One may compute explicitly the integrals above and get
\begin{equation*}
\int_{S^{n-1}}e_i^2dS(y)=\frac{\omega_n}{n},\;\;
\int_{S^{n-1}}e_i^2e_j^2dS(y) = \left \{ \begin{array}{rl}
\frac{\omega_n}{n(n+2)} & i\neq j\\
\\
\frac{3\omega_n}{n(n+2)} & i =j
\end{array}\right.
\end{equation*}
Therefore
\begin{equation*}
\int_{S^{n-1}}\left (e_1^4-\frac{e_1^2}{n+2} \right
)\;dS(e)=\frac{2\omega_n}{n(n+2)}, \;\;	 \int_{S^{n-1}} \left
(e_1^2e_2^2-\frac{e_2^2}{n+2} \right)\;dS(e) = 0
\end{equation*}
Plugging this in the last expression for $\lim
\limits_{\sig\to2^-}P_{x_1x_1}(x)$ we obtain
\begin{equation*}
\lim \limits_{\sig\to2^-}P_{x_1x_1}(x)=\frac{n(n+2)}{2\omega_n}\left (
\frac{2\omega_n}{n(n+2)} v_{x_1x_1}+0\right ) = v_{x_1x_1}
\end{equation*}
This proves the proposition.
\end{proof}

\begin{rem} 
The spherical integrals above are not hard to compute,
they follow from counting the different terms appearing in the trivial
identities:
\begin{equation*}
\int_{S^{n-1}}e_1^2+...+e_n^2\;dS(e)=\int_{S^{n-1}}\left (
e_1^2+...+e_n^2\right )^2dS(e)=\int_{S^{n-1}}dS(e)=\omega_n
\end{equation*}
and from the relation
$\int_{S^{n-1}}e_1^2e_2^2dS(e)=\frac{1}{3}\int_{S^{n-1}}e_1^4dS(e)$
which is a standard computation for $n=2$ and can be pushed for all
$n$ via  induction (integrating along slices of the sphere and
rescaling the lower dimensional formula in the inductive step).
\end{rem}

\subsection{Operators covered in the limit $\sig\to2^-$} We now characterize the second order operators that can be obtained as a limit of fractional order operators of the form~\eqref{eq:LinearGUISCH}. It is rather surprising that many elliptic linear operators of second order cannot be obtained as limits of operators of the form~\eqref{eq:LinearGUISCH}, as opposed to what was expected (see discussion in \cite[Section 3]{CaSi-09RegularityIntegroDiff}).

\begin{prop}
Given a symmetric $n\times n$ matrix $M$ and $v \in C^2_{loc}(\real^n) \cap L^\infty(\real^n)$ we define
\begin{equation*}
L_{M,\sig}(v,x)=\Tr( M \cdot h_\sig(v,x)) = \frac{(n+\sig-2)(n+\sig)}{2}A(n,2-\sig)\int_{\real^n}\delta v(x,y)\frac{y^T M y}{|y|^{n+\sig+2}}dy.
\end{equation*}
Then, for any $x \in \real^n$ we have
\begin{equation*}
\lim \limits_{\sig\to2^-} L_{M,\sig}(v,x) = \Tr\left ( (M -\tfrac{\Tr(M)}{n+2}\Id ) \cdot D^2v(x) \right ).	
\end{equation*}
Moreover, an operator of the form $L(v,x)=\Tr(A\cdot D^2v(x))$ can be obtained as a limit of the operators $L_{M,\sig}$ for some $M\geq 0$ if and only if,
\begin{equation}\label{eqn:LimitMatrixProperty}
A-\tfrac{\Tr(A)}{n+2}\Id \geq 0.	
\end{equation}
\end{prop}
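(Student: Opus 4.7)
The plan is to compute $\lim_{\sigma\to 2^-}L_{M,\sigma}(v,x)$ by passing the limit inside the integral representation of $L_{M,\sigma}$, following closely the strategy of Proposition~\ref{prop:LimitSigmaTo2HessionGamma}. First I would split the integration domain into $B_\epsilon(0)$ and its complement: the outer piece is bounded by $\|v\|_\infty|M|\,A(n,2-\sigma)\int_{B_\epsilon^c}|y|^{-n-\sigma}\,dy$, which tends to $0$ as $\sigma\to 2^-$ because $A(n,2-\sigma)\sim (2-\sigma)/\omega_n$. Thus only the inner piece contributes in the limit.

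For the inner piece, I would Taylor-expand $\delta v(x,y)=y^TD^2v(x)y+o(|y|^2)$ and convert to polar coordinates. Modulo a remainder controlled uniformly in $\sigma$ by the assumption $v\in C^2_{loc}$, the inner integral reduces to
\begin{equation*}
\frac{(n+\sigma-2)(n+\sigma)}{2}A(n,2-\sigma)\,\frac{\epsilon^{2-\sigma}}{2-\sigma}\int_{S^{n-1}}(e^TD^2v(x)e)(e^TMe)\,dS(e),
\end{equation*}
whose prefactor converges to $n(n+2)/(2\omega_n)$. I would then expand both quadratic forms and apply the fourth-moment identity
\begin{equation*}
\int_{S^{n-1}}e_ie_je_ke_l\,dS(e)=\frac{\omega_n}{n(n+2)}\bigl(\delta_{ij}\delta_{kl}+\delta_{ik}\delta_{jl}+\delta_{il}\delta_{jk}\bigr),
\end{equation*}
already derived in Proposition~\ref{prop:LimitSigmaTo2HessionGamma}. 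Collecting terms yields the stated limit, which takes the form $\Tr(\Psi(M)\cdot D^2v(x))$ for an explicit linear map $\Psi$ on symmetric matrices of the shape $M\mapsto aM+b\,\Tr(M)\,\Id$.

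For the characterization in the second part, I would observe that $\Psi$ is a rank-one update of a scalar multiple of the identity on the space of symmetric matrices and is therefore invertible. I would invert $\Psi$ by taking traces on both sides of $\Psi(M)=A$ to solve for $\Tr(M)$ in terms of $\Tr(A)$, and then back-substituting to express $M$ explicitly in terms of $A$. The condition $M\geq 0$ on the preimage then translates directly into the positivity condition~(\ref{eqn:LimitMatrixProperty}) on $A$. The only genuinely delicate point in the whole argument is the bookkeeping of the vanishing factors $A(n,2-\sigma)$ and $(n+\sigma-2)$ against the divergent $(2-\sigma)^{-1}$ coming from $\int_0^\epsilon r^{1-\sigma}\,dr$, together with a uniform control of the Taylor remainder as $\sigma\to 2^-$; everything else is linear algebra and routine integration.
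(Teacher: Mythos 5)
Your argument is correct, but it takes a genuinely different route from the paper's. The paper proves this proposition with almost no integration at all: it invokes Lemma \ref{lem:SingularIntegralPij} to write $h_\sig(v,x)=D^2P^\sig_v(x)+\tfrac{\Delta P^\sig_v(x)}{\sig}\Id$ for the Riesz potential $P^\sig_v=(-\Delta)^{-(2-\sig)/2}v$, so that $L_{M,\sig}(v,x)=\Tr\bigl((M+\tfrac{\Tr(M)}{\sig}\Id)\cdot D^2P^\sig_v(x)\bigr)$, and then passes to the limit using the already-established convergence $D^2P^\sig_v\to D^2v$ (Proposition \ref{prop:LimitSigmaTo2HessionGamma}); the rest is the same trace-inversion linear algebra you describe. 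Your route instead redoes the direct asymptotic computation (splitting at $B_\epsilon$, Taylor expansion, spherical fourth moments) for $L_{M,\sig}$ itself, essentially re-proving the analogue of Proposition \ref{prop:LimitSigmaTo2HessionGamma} with the kernel $y^TMy/|y|^{n+\sig+2}$ in place of the Hessian kernel. What the paper's approach buys is brevity and a conceptual link to the change of variables $A\mapsto A_\sig$ of (\ref{eq:ASigma}); what yours buys is self-containedness and, via the closed-form fourth-moment tensor, an explicit identification of the map $\Psi$ without reference to the potential. One caution: if you carry out your computation, the moment identity gives
\begin{equation*}
\lim_{\sig\to2^-}L_{M,\sig}(v,x)=\Tr(M\,D^2v(x))+\tfrac{\Tr(M)}{2}\Delta v(x),\qquad\text{i.e.}\qquad \Psi(M)=M+\tfrac{\Tr(M)}{2}\Id,
\end{equation*}
which agrees with what the paper's proof derives and, after inverting ($\Tr(A)=\tfrac{n+2}{2}\Tr(M)$, hence $M=A-\tfrac{\Tr(A)}{n+2}\Id$), with the characterization (\ref{eqn:LimitMatrixProperty}); but it does \emph{not} agree with the limit formula displayed in the statement, which reads $\Tr((M-\tfrac{\Tr(M)}{n+2}\Id)\cdot D^2v)$. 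That display is internally inconsistent with the second half of the proposition (the map $M\mapsto M-\tfrac{\Tr(M)}{n+2}\Id$ is the \emph{inverse} $A\mapsto M$, mistakenly written in the forward direction), so do not force your constants to match it; your $\Psi$ is the correct one.
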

\begin{proof}
This proposition is merely and application of Lemma \ref{lem:SingularIntegralPij}, which in our case says that, if we consider the function $P^\sig_v=(-\Delta)^{-(\tfrac{2-\sig}{2})}v$ then
\begin{align*}
\Delta P(x) & = \tfrac{(n+\sig-2)(n+\sig)}{2}A(n,2-\sig)\int_{\real^n}\frac{\delta \Gamma(x,y)}{|y|^{n+\sig}}\left (1-\tfrac{n}{n+\sig} \right )dy	\\
& = \tfrac{(n+\sig-2)\sig}{2}A(n,2-\sig)\int_{\real^n}\frac{\delta \Gamma(x,y)}{|y|^{n+\sig}}dy.\\
\end{align*}
In particular, we get
\begin{equation*}
-\Id\tfrac{(n+\sig-2)}{2}A(n,2-\sig)\int_{\real^n}\frac{\delta \Gamma(x,y)}{|y|^{n+\sig}}dy	=-\tfrac{1}{\sig}\Delta P(x).
\end{equation*}
Then, again by Lemma \ref{lem:SingularIntegralPij} and the definition of $h^\sig(v,x)$, we have
\begin{equation*}
h_\sig(v,x)= D^2P(x)+\tfrac{\Delta P(x)}{\sig}\Id.
\end{equation*}
Thus,
\begin{align*}
L_{M,\sig}(v,x) & = \Tr\left (M\cdot (D^2P^\sig_v(x)+\tfrac{\Tr(D^2P^\sig_v(x))}{\sig}\Id ) \right ) \\
& = \Tr(M\cdot D^2P^\sig_v(x))+\tfrac{1}{\sig}\Tr(M)\Tr(D^2P^\sig_v(x	))\\
& = \Tr \left ( (M +\tfrac{\Tr(M)}{\sig}\Id ) \cdot D^2P^\sig_v(x) \right ).	
\end{align*}
Since $v \in C^2_{loc}(\real^n)$ we have $D^2P^\sig_v \to D^2v(x)$ for every $x$ as $\sig\to 2^-$, and
\begin{equation*}
\lim \limits_{\sig \to 2^-}L_{M,\sig}(v,x) = \Tr(A\cdot D^2v(x)),\;\;A:=M+\tfrac{\Tr(M)}{2}\Id. 	
\end{equation*}
Now, we invert the relation defining $A$ in terms of $M$, it is worth noticing that this relation is nothing else but $A=M_\sig$ when $\sig=2$, as defined previously in \eqref{eq:ASigma}. Taking the trace on both sides gives us $\Tr(A)=(1+\tfrac{n}{2})\Tr(M)$, so that $M=A-\tfrac{\Tr(M)}{2}\Id=A-\tfrac{\Tr(A)}{n+2}\Id$, from where it follows that $M\geq 0$ if and only if $A-\tfrac{\Tr(A)}{n+2}\Id \geq 0$, and the proposition is proved.
\end{proof}
\begin{rem}
Note that for $A=\lambda \Id$ ($\lambda > 0)$ we have $A-\tfrac{\Tr(A)}{n+2}\Id= \lambda \tfrac{2}{n+2}\Id \geq 0$. In particular, in the space of matrices there is a tubular neighborhood of this ray on which \eqref{eqn:LimitMatrixProperty} holds. 
\end{rem}
\begin{rem}
As it was natural to expect, the operator $L(v,x)$ appearing in Proposition \ref{prop:LimitSigmaTo2HessionGamma} (once linearized about some $v$) satisfies \eqref{eqn:LimitMatrixProperty}.	
\end{rem}

\section{Comparison Theorems Related to Theorem \ref{thm:Main}}\label{sec:Useful Consequences}
\setcounter{equation}{0}

In this section we collect various results that are either direct applications of Theorem \ref{thm:Main} or straightforward modifications of its proof.  Each of the results here are stated without proof, and we simply mention some of the modifications.  First we mention in Theorem \ref{thm:GeneralDomains} the analog of Theorem \ref{thm:Main} to more general domains than just $B_1$, and in Theorem \ref{thm:Comparison} the applications to the comparison principle for (\ref{eq:PIDEgeneral}).

\begin{thm}\label{thm:GeneralDomains}
Let equation (\ref{eq:PIDEmain}) be set in a general bounded, connected, domain, $D$ instead of in $B_1$.  Then 
\begin{equation*}
-\inf_{D}\{u\} \leq \frac{C(n)}{\lam}\diam(D)(\norm{f}_{L^\infty(K_u)})^{(2-\sig)/2}(\norm{f}_{L^n(K_u)})^{\sig/2}.
\end{equation*}
\end{thm}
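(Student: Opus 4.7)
The plan is to reduce Theorem \ref{thm:GeneralDomains} to Theorem \ref{thm:Main} via a scaling argument. After a translation we may assume $D\subset B_R(0)$ where $R:=\diam(D)$ (any point of $D$ can serve as the center, since every other point lies within distance $R$). Set $v(x):=u(Rx)$; then $v\le 0$ in $\tilde D:=D/R\subset B_1$ and $v=0$ on $\real^n\setminus\tilde D$. A change of variables $z=Ry$ in the singular integral defining $M^-$ yields the scaling identity
\begin{equation*}
M^-(v,x)=R^\sigma M^-(u,Rx),
\end{equation*}
so on $\tilde D$ the rescaled function satisfies $M^-(v,x)\le R^\sigma f(Rx)=:\tilde f(x)$ in the viscosity sense.

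To invoke Theorem \ref{thm:Main} I first promote $v$ to a supersolution on all of $B_1$. At any $x\in B_1\setminus\tilde D$ we have $v(x)=0$ while $v\le 0$ everywhere, so $\delta v(x,y)=v(x+y)+v(x-y)\le 0$ for every $y$, which forces $M^-(v,x)\le 0$ in the viscosity sense (a zero test function does the job). Hence $v$ is a viscosity supersolution of $M^-(v,x)\le\bar f(x)$ on $B_1$, where $\bar f\colon\bar B_1\to[0,\infty)$ is any continuous nonnegative extension of $\tilde f|_{\bar{\tilde D}}$, obtained for example via Tietze's theorem.

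Theorem \ref{thm:Main} applied to $v$ produces
\begin{equation*}
-\inf_D u=-\inf_{B_1}v\le\frac{C(n)}{\lambda}\,\|\bar f\|_{L^\infty(K_v)}^{(2-\sigma)/2}\,\|\bar f\|_{L^n(K_v)}^{\sigma/2}.
\end{equation*}
The contact-set bound \eqref{eq:GammaDsigmaBound} only picks up points where $\D_\sigma(\Gamma_v)>0$, and at contact points $x\in K_v\cap(B_1\setminus\tilde D)$ one has $M^-(\Gamma_v,x)\le M^-(v,x)\le 0$, so those points contribute nothing. Hence one may effectively replace $\bar f$ by $\tilde f\,\Indicator_{\bar{\tilde D}}$ in the estimate, and the change of variables gives
\begin{equation*}
\|\tilde f\|_{L^\infty(K_v\cap\tilde D)}=R^\sigma\|f\|_{L^\infty(K_u)},\qquad
\|\tilde f\|_{L^n(K_v\cap\tilde D)}=R^{\sigma-1}\|f\|_{L^n(K_u)}.
\end{equation*}
Combining the exponents, $\sigma(2-\sigma)/2+(\sigma-1)\sigma/2=\sigma/2$, produces an overall factor $R^{\sigma/2}\le R=\diam(D)$ in the regime $R\ge 1$ (the small-diameter case being accommodated by the dimensional constant), yielding the stated inequality.

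The main technical obstacle I anticipate is the continuity bookkeeping at $\partial\tilde D$: Theorem \ref{thm:Main} is stated with a continuous right-hand side, whereas the natural majorant $\tilde f\,\Indicator_{\bar{\tilde D}}$ is only upper semicontinuous. The cleanest fix is to approximate by a decreasing family of continuous extensions $\bar f_\epsilon\searrow\tilde f\,\Indicator_{\bar{\tilde D}}$ and pass to the limit using dominated convergence on the $L^\infty$ and $L^n$ norms restricted to $K_v\cap\tilde D$; alternatively, one can simply re-run the geometric step of Section \ref{sec:Proof} with the ambient ball $B_3$ replaced by $B_{3R}$, so that the diameter scaling appears directly without any extension step at all.
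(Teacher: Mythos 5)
Your route is genuinely different from the paper's. The authors do not rescale: they re-run the whole construction, posing the obstacle problem defining $\Gamma$ on the enlarged set $D_3=\{x:\text{dist}(x,D)\leq 3\}$ and choosing the radius $R_\sig$ of Lemma \ref{lem:PotenialOscillation} large enough for $D_3$, so that the diameter enters through the constant in the Aleksandrov estimate. You instead dilate $D$ into $B_1$, promote $v$ to a supersolution on all of $B_1$ (correctly: where $v=0=\max v$, every admissible test configuration $w$ satisfies $\del w(x,y)\leq 0$, hence $M^-(w,x)\leq 0$), and then apply Theorem \ref{thm:Main} as a black box. The scaling identity $M^-(v,x)=R^\sig M^-(u,Rx)$ and the norm computations are correct, and your observation that contact points of $K_v$ outside $\tilde D$ contribute nothing is exactly right: there $0\leq(\det D^2P_v(x))^{1/n}\leq \frac{C}{\lam}M^-(\Gamma_v,x)\leq 0$ by Lemma \ref{lem:NonlocalDetMminusOrder}, so $\det D^2P_v=0$ and the artificial extension $\bar f$ never enters the integral; likewise the argmin of $\Gamma_v$ lies in $\tilde D$, so the $L^\infty$ factor from Proposition \ref{prop:PointToMeasureSIGMA} is controlled by $\tilde f$ alone. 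This is cleaner and more modular than the paper's modification (the only cosmetic discrepancy is that your contact set is the pull-back of the one for the rescaled problem, not the one built from $D_3$; both are legitimate readings of ``a special envelope of $u$'').

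The one genuine flaw is the final step. Your computation produces the factor $R^{\sig/2}$, and $R^{\sig/2}\leq R$ only when $R\geq 1$; for $R<1$ the ratio $R^{\sig/2-1}$ blows up as $R\to 0$, so the small-diameter case is \emph{not} ``accommodated by the dimensional constant.'' In fact $\sig/2$ is the scaling-correct exponent: taking $D=B_r$ and $f\equiv 1$, the barrier $u(x)=-c\,(r^2-|x|^2)_+^{\sig/2}$ (normalized so that $M^-(u,\cdot)\leq 1$) gives $-\inf u\sim r^\sig$, while the stated right-hand side is of order $r\cdot r^{\sig/2}=r^{1+\sig/2}$ and $r^{\sig}/r^{1+\sig/2}=r^{\sig/2-1}\to\infty$. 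So your argument proves the theorem whenever $\diam(D)\geq 1$ (indeed with the sharper factor $\diam(D)^{\sig/2}$), but for $\diam(D)<1$ the inequality as literally stated is false, and the paper's own sketch would also only yield a constant of order $(\diam(D)+6)^{\sig/2}$ in that regime. You should either state the conclusion with the factor $\diam(D)^{\sig/2}$ or restrict to $\diam(D)\geq 1$, rather than assert that the constant absorbs the discrepancy.
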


We note that to modify the proof of Theorem \ref{thm:Main} to incorporate $D$, one simply needs to modify the domain of truncation in the definition of $\Gamma^\sig_u$, Definition \ref{def:GammaU}, and also the selection of the radius, $R_\sig$, from Lemma \ref{lem:PotenialOscillation}.  In particular, if we define the set $D_3:=\{x: distance(x,D)\leq 3\}$, then we are concerned with the obstacle problem in $D_3$, using $u\Indicator_{D}$ in Definition \ref{def:GammaU}.  Furthermore, the ball $B_3$ is no longer used in the proof of Lemma \ref{lem:PotenialOscillation}, but instead $R_\sig$ is chosen large enough to compensate for the size of $D_3$ instead of the size of $B_3$.

An immediate consequence of the fact that the difference between a subsolution and a supersolution with solve the minimal equation for $M^-$ is that we get a comparison theorem for subsolutions and supersolutions of equations with the same operator, $F$, but different right hand sides.

\begin{thm}\label{thm:Comparison}
Suppose that $F$ is in the elliptic family for $M^-$ and $M^+$.
Let $u$ and $v$ be bounded and respectively usc subsolution and lsc supersolution of
\begin{equation*}
\begin{cases}
\ds F(u,x)\geq f(x) &\text{ in } D\\
u= u_0 &\text{ on } \real^n\setminus D
\end{cases}
\end{equation*}
and 
\begin{equation*}
\begin{cases}
\ds F(v,x)\leq g(x) &\text{ in } D\\
v=v_0 &\text{ on } \real^n\setminus D.
\end{cases}
\end{equation*}
then 
\begin{align*}
&\sup_{D}\{u-v\}\\
&\leq \sup_{\real^n\setminus D}\{u_0-v_0\}+\frac{C(n)}{\lam}\diam(D)(\norm{(f-g)^-}_{L^\infty(K_{u-v})})^{(2-\sig)/2}(\norm{(f-g)^-}_{L^n(K_{u-v})})^{\sig/2}.
\end{align*}
\end{thm}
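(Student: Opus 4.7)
The plan is to reduce Theorem~\ref{thm:Comparison} directly to Theorem~\ref{thm:GeneralDomains} by applying the nonlocal ABP estimate to the difference $w := v - u$. Since $F$ belongs to the ellipticity family for the extremal operators $M^-$ and $M^+$, the difference of a subsolution and a supersolution satisfies, in the viscosity sense,
$$M^-(w, x) = M^-(v - u, x) \leq F(v, x) - F(u, x) \leq g(x) - f(x) \quad \text{in } D.$$
Replacing the right-hand side by its positive part only relaxes the supersolution inequality, so in particular $M^-(w, x) \leq (f - g)^-(x)$ in $D$.

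Next, to accommodate the nonzero exterior data, set $c := \sup_{\real^n \setminus D}(u_0 - v_0)$ and let $\tilde w := w + c = v - u + c$. Adding a constant does not affect $M^-$, so $M^-(\tilde w, x) \leq (f - g)^-(x)$ in $D$, while on $\real^n \setminus D$ one has $\tilde w = v_0 - u_0 + c \geq 0$, placing us exactly in the setting of Theorem~\ref{thm:GeneralDomains}. An application of that theorem to $\tilde w$ yields
$$-\inf_D \tilde w \leq \frac{C(n)}{\lam}\,\diam(D)\,\bigl(\|(f-g)^-\|_{L^\infty(K_{u-v})}\bigr)^{(2-\sig)/2}\bigl(\|(f-g)^-\|_{L^n(K_{u-v})}\bigr)^{\sig/2},$$
where $K_{u-v}$ denotes the contact set between $\tilde w$ and its $\sigma$-order envelope (equivalently, the contact set for $v - u$ up to the constant shift). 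Since $\sup_D(u - v) = -\inf_D w = c + (-\inf_D \tilde w)$, rearranging gives exactly the stated inequality.

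The most delicate step is justifying the pointwise ellipticity inequality $M^-(v - u, x) \leq F(v, x) - F(u, x)$ at the level of viscosity sub- and supersolutions, which need not be pointwise differentiable. This is handled by the standard viscosity machinery for nonlocal equations, namely doubling of variables combined with sup- and inf-convolution regularization, as developed in \cite{BaIm-07} and \cite[Section~5]{CaSi-09RegularityIntegroDiff}. Once this inequality is established in the viscosity sense, the remainder of the proof is the straightforward constant-shift argument above plus a single application of Theorem~\ref{thm:GeneralDomains}.
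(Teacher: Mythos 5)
Your argument is correct and follows exactly the route the paper indicates (the paper states Theorem \ref{thm:Comparison} without proof, noting only that it is "an immediate consequence" of ellipticity plus Theorem \ref{thm:GeneralDomains}): you use the ellipticity inequality $M^-(v-u,x)\leq F(v,x)-F(u,x)\leq g-f\leq (f-g)^-$, shift by the constant $c=\sup_{\real^n\setminus D}(u_0-v_0)$ to obtain nonnegative exterior data, and apply the ABP estimate on general domains. Your acknowledgment that the viscosity-level inequality for the difference of a usc subsolution and lsc supersolution requires the sup/inf-convolution machinery of \cite{BaIm-07} and \cite{CaSi-09RegularityIntegroDiff} is exactly the right caveat.
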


\section{Special Cases of the Regularity Theory of Caffarelli and Silvestre}\label{sec:NonlocalLepsilon}
\setcounter{equation}{0}

In this section we show how Theorem \ref{thm:Main} can be used to
prove the standard $L^\epsilon$ estimates for viscosity solutions of
(\ref{eq:PIDEmain}) (at least for a special family of operators). None
of these results are new, as such estimates have already been proved
in \cite[Section 10]{CaSi-09RegularityIntegroDiff} for very general nonlinear
integro-differential equations. The $L^\epsilon$ estimate constitutes
the backbone of the regularity theory for fully nonlinear elliptic
equations; it is the key fact behind the Harnack inequality of
Krylov-Safonov, the Evans-Krylov theorem and the respective
Caffarelli-Silvestre theorems for nonlocal equations (again, see
\cite{CaCa-95} and \cite{CaSi-09RegularityIntegroDiff}). Our purpose in revisiting this
part of the theory in our case is basically illustrating the uses of
Theorem \ref{thm:Main}, in particular Theorem \ref{thm:Main} allows us
to do essentially the same proof of the $L^\epsilon$ bound for second
order equations used in \cite[Lemma 4.6]{CaCa-95} (compare to the different method needed in \cite[Section 10]{CaSi-09RegularityIntegroDiff}).
Furthermore, the estimates obtained are uniform in the order of the equation,
recovering the second order theory as $\sigma \to 2$, which was already done
in \cite{CaSi-09RegularityIntegroDiff}

We state without proof the following proposition regarding the construction of a
special barrier function. It follows by an argument similar to that used in Lemma 9.1 of \cite{CaSi-09RegularityIntegroDiff}.

\begin{prop}\label{prop:SpecialFunction}
Given $0<\lambda \leq \Lambda$ and $\sigma_0 \in (0,2)$ there exist
constants $C_0,M>0$ and a $C^{1,1}$ function $\eta(x):\mathbb{R}^n
\to\mathbb{R}$ such that
\begin{enumerate}
       \item $\mbox{supp} \; \eta \subset B_{2\sqrt{n}}(0)$
       \item $\eta \leq -2$ in $Q_3$ and $\|\eta\|_\infty \leq M$
       \item For every $\sigma>\sigma_0$ we have $M^+(\eta,x)   \leq C_0
\xi$ everywhere where $\xi$ is a continuous function with support
inside $B_{1/4}(0)$ and such that $0\leq \xi\leq 1$.
\end{enumerate}
       \end{prop}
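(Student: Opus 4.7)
The plan is to construct $\eta$ as a $C^{1,1}$ radial truncation of the profile $|x|^{-p}$, following the strategy of \cite[Lemma 9.1]{CaSi-09RegularityIntegroDiff} but adapted to our restricted extremal operator $M^+$. Concretely, I would fix a large exponent $p = p(n,\lambda,\Lambda,\sigma_0)$ and a small parameter $r_0 > 0$, and set
\begin{equation*}
\eta(x) := c_1\bigl(q(|x|) - (2\sqrt n)^{-p}\bigr)\,\chi(x),
\end{equation*}
where $q(r) = r^{-p}$ for $r \geq r_0$, $q$ is extended to a quadratic on $[0,r_0]$ to match $r^{-p}$ in $C^{1,1}$ at $r=r_0$, and $\chi$ is a smooth radial cutoff equal to $1$ on $B_{2\sqrt n-\delta}(0)$ and vanishing outside $B_{2\sqrt n}(0)$. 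The constant $c_1$ is chosen so that $\eta \le -2$ on $Q_3$ (contained in $B_{(3/2)\sqrt n}$, inside $\{\chi=1\}$), and this fixes $M = \|\eta\|_\infty$.

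The core step is to show that $M^+(\eta,x) \le 0$ for every $x \in \mathbb{R}^n \setminus B_{1/4}(0)$, uniformly in $\sigma \in (\sigma_0,2)$. I would split the analysis into three regions. On the main annulus where $\eta$ coincides with $c_1(|x|^{-p} - \mathrm{const})$, a direct computation using formula~\eqref{eq:MPlusDef} gives the bound: since every admissible $A$ satisfies $A \le \Lambda \Id$, one has $y^T A y / |y|^{n+\sig+2} \le \Lambda |y|^{-n-\sig}$ pointwise, so $M^+(\eta,x)$ is dominated by a Caffarelli--Silvestre-type extremal operator, and the same calculation as in \cite[Lemma 9.1]{CaSi-09RegularityIntegroDiff} applies: for $p$ large the positive second differences of $|x|^{-p}$ concentrated near $y=0$ (which carry the $(2-\sig)$ factor) are overcome by the strongly negative tail where $|x+y|^{-p}\to 0$. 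In the thin shell $\{2\sqrt n - \delta \le |x| \le 2\sqrt n\}$, choosing $\delta$ small makes $\delta\eta(x,y)\le 0$ for the dominant portion of the integration domain and the remainder is absorbed by the tail estimate. Inside $B_{1/4}(0)$ the $C^{1,1}$ norm of $\eta$ (controlled independently of $\sig$) gives $|M^+(\eta,x)| \le C_0$ uniformly for $\sig \in (\sigma_0,2)$.

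Finally, choosing $\xi$ to be any fixed continuous radial bump supported in $B_{1/4}(0)$ with $0 \le \xi \le 1$ and $\xi \equiv 1$ on $B_{1/8}(0)$, the pointwise bounds above combine to give $M^+(\eta,x) \le C_0\,\xi(x)$ on $\mathbb{R}^n$, as required. The main obstacle is the uniformity in $\sig \in (\sigma_0,2)$: the normalization $(2-\sig)$ exactly compensates the near-diagonal singularity of $|y|^{-n-\sig}$, but to ensure the negative tail dominates one must track the dependence of the tail estimate on $\sig$; this is what forces the choice of $p$ to depend explicitly on $\sigma_0$ (and blows up as $\sigma_0 \to 0$). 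A secondary technical point is ensuring that the gluing of the quadratic piece at $|x|=r_0$ and the cutoff near $|x|=2\sqrt n$ do not introduce spurious positive contributions to $M^+(\eta,\cdot)$ outside $B_{1/4}$; this is handled by taking $r_0$ and $\delta$ sufficiently small compared to $1/4$ so that the modified regions are separated from $\{|x|\ge 1/4\}$ by a fixed distance.
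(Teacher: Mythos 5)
Your overall construction (a $C^{1,1}$ truncation of $\pm|x|^{-p}$ with $p$ large, a cutoff near $\partial B_{2\sqrt n}$, and $\xi$ absorbing the bounded values of $M^+\eta$ inside $B_{1/4}$) is the right skeleton, and it is indeed the route the paper points to: the proposition is stated without proof, with a reference to an argument ``similar to'' \cite[Lemma 9.1]{CaSi-09RegularityIntegroDiff}. The gap is in your central reduction. You claim that since $A\leq\Lam\Id$, the kernels satisfy $y^TAy\,\abs{y}^{-n-\sig-2}\leq\Lam\abs{y}^{-n-\sig}$ and hence $M^+(\eta,\cdot)$ is dominated by a Caffarelli--Silvestre extremal operator, so that ``the same calculation'' applies. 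A pointwise \emph{upper} bound on a nonnegative kernel only controls the contribution of $(\del\eta)^+$; to run the barrier computation you need a pointwise \emph{lower} bound $y^TAy\geq c\abs{y}^2$ on the part of the integral where $\del\eta<0$, and the admissible class here has none: $A=\lam\,e\otimes e$ satisfies $\Tr(A)=\lam$ and $A\leq\Lam\Id$, yet its kernel vanishes on the hyperplane $e^\perp$. Concretely, take $\eta=-c_1(\abs{x}^{-p}-c)$ on the main annulus and $e=\tau$ tangential to the sphere at $x$: the weight $(\tau\cdot y)^2/\abs{y}^2$ nearly annihilates the radial directions, which is exactly where the favorable (strongly negative, of size $\sim p^2\abs{x}^{-p-2}$) second differences of $\eta$ live, while giving full weight $\lam$ to the tangential directions, where $\del\eta>0$ of size $\sim p\abs{x}^{-p-2}$. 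So the naive direction-by-direction comparison suggests $L_{\lam\tau\otimes\tau}(\eta,x)>0$, and the CS class $\{a:\lam\leq a\leq\Lam\}$ simply does not contain these kernels. (A secondary point: you also have the CS mechanism backwards --- the near-diagonal radial term must \emph{win} over the tail by the $p^2$ versus $p$ scaling; the tail $\del\eta(x,y)\to-2\eta(x)>0$ is an adversary of fixed size, not the ally that ``overcomes'' the near-diagonal part.)

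What saves the statement is that the trace condition $\Tr(A)\geq\lam$ must be used in an \emph{integrated} rather than pointwise way. Since $\int_{S^{n-1}}\theta^TA\theta\,dS(\theta)=\tfrac{\omega_n}{n}\Tr(A)\geq\tfrac{\omega_n\lam}{n}$, every kernel in the class contains, after averaging over directions, a definite multiple of the fractional Laplacian; this is precisely what the paper's change of variables $A\mapsto A_\sig=A+\tfrac{\Tr(A)}{\sig}\Id$ encodes (Lemma \ref{lem:SingularIntegralTraceAHessionP} and Remark \ref{rem:ASigmaElliptic}): one has $L_A(v,\cdot)=c_{n,\sig}\Tr(A_\sig D^2P_v)$ with $P_v$ the Riesz potential of $v$ and $A_\sig\geq\tfrac{\lam}{2}\Id$ \emph{uniformly elliptic}. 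In the second-order limit this gives, for $A=\lam\tau\otimes\tau$, the value $c\,\lam\bigl(\eta_{\tau\tau}+\tfrac12\Delta\eta\bigr)=c\,\lam\,c_1p\abs{x}^{-p-2}\bigl(1-\tfrac{p+2-n}{2}\bigr)<0$ for $p>n$, so the barrier does work --- but only because of the $\tfrac12\Delta\eta$ correction, which your domination argument discards. You should therefore either carry out the computation for $\Tr(A_\sig D^2P_\eta)$ using the uniform ellipticity of $A_\sig$, or redo the direct estimate of $(2-\sig)\int\del\eta(x,y)\,y^TAy\,\abs{y}^{-n-\sig-2}dy$ by first integrating over spheres $\abs{y}=s$ and using the trace lower bound on the spherical average, tracking the constants uniformly for $\sig\in(\sig_0,2)$. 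As written, the key inequality $M^+(\eta,x)\leq0$ off $B_{1/4}$ is not established.
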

With this special function in hand, Theorem \ref{thm:Main} allows
(just as in the second order theory) to control the \emph{average
size} of a supersolution in terms of its \emph{value at a point}.
This point to average estimate implies a weak $L^\epsilon$ estimate
for supersolutions which is the key step in the proof of the
Krylov-Safonov and Caffarelli-Silvestre regularity theorems.

\begin{lem}\label{lem:LEpsilonBaseCase}
Given $n,\lambda,\Lambda$ and $\sigma_0$ such that $0<\lambda\leq
\Lambda$ and $\sigma_0 \in (0,2)$, one can find positive constants
$M>1,\mu<1$ and $\delta_0$ such that if $u$ satisfies:
\begin{enumerate}
       \item $u \geq 0$ in $\mathbb{R}^n$
       \item $\inf \limits_{Q_3} u \leq 1$
       \item $M^-(u,x) \leq f$ in $Q_{4\sqrt{n}}$ (for some
$\sigma>\sigma_0$), and $\|f\|_{L^n(Q_{4\sqrt{n}})} \leq \delta _0$,
$\|f\|_{L^\infty(Q_{4\sqrt{n}})} \leq 1$.
\end{enumerate}
Then we have the bound
\begin{equation}\label{eq:LEpsilon1}
\left | \left \{ x\in Q_1: u(x)\leq M \right \} \right | \geq \mu |Q_1|.
\end{equation}

\end{lem}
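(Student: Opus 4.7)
The plan is to follow the classical second-order strategy of \cite[Lemma 4.5]{CaCa-95}, using Theorem \ref{thm:Main} (in the form of Theorem \ref{thm:GeneralDomains}, applied on the ball $D := B_{2\sqrt{n}}$ instead of $B_1$) as the ABP input.

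First I would set $w := u + \eta$, where $\eta$ is the barrier of Proposition \ref{prop:SpecialFunction}. Since $M^-$ is the infimum of linear operators one has the pointwise bound $M^-(w,x) \leq M^-(u,x) + M^+(\eta,x) \leq f(x) + C_0 \xi(x)$ in $Q_{4\sqrt{n}}$, so $w$ is a viscosity supersolution of $M^-\leq g$ with $g := f + C_0 \xi$. Because $\eta$ is supported in $B_{2\sqrt{n}}$ and $u \geq 0$, $w = u \geq 0$ outside $D$; and because $\eta \leq -2$ on $Q_3 \subset D$ while $\inf_{Q_3} u \leq 1$, we have $\inf_{D} w \leq \inf_{Q_3} w \leq -1$.

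Next I would apply Theorem \ref{thm:GeneralDomains} to $w$ on $D$, absorbing $\diam(D)$ into a universal constant. Using $\norm{g}_{L^\infty} \leq 1 + C_0$ together with the fact that $\xi \leq \Indicator_{B_{1/4}}$,
\begin{equation*}
\norm{g}_{L^n(K_w)} \leq \norm{f}_{L^n(Q_{4\sqrt{n}})} + C_0 \abs{K_w \cap B_{1/4}}^{1/n} \leq \delta_0 + C_0 \abs{K_w \cap B_{1/4}}^{1/n},
\end{equation*}
so that the ABP estimate produces
\begin{equation*}
1 \leq \frac{C(n)}{\lam}(1+C_0)^{(2-\sig)/2}\bigl(\delta_0 + C_0 \abs{K_w \cap B_{1/4}}^{1/n}\bigr)^{\sig/2}.
\end{equation*}
Since $\sig \in [\sig_0, 2)$ the two $\sig$-dependent exponents stay uniformly bounded, so choosing $\delta_0$ small enough (depending only on $n,\lam,\sig_0,C_0$) forces $\abs{K_w \cap B_{1/4}} \geq c_0$ for some universal $c_0 > 0$.

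The last step is to convert this contact-set estimate into the claimed bound on $\{u \leq M\}$. On $K_w$ one has $w(x) = \Gamma^\sig_w(x)$, where $\Gamma^\sig_w$ is the envelope of Definition \ref{def:GammaU} constructed with obstacle $w\Indicator_D$ on $D_3 = B_{2\sqrt{n}+3}$. I claim that $\Gamma^\sig_w \leq 0$ everywhere: any admissible competitor $v$ satisfies $v \leq 0$ outside $D$, while inside $D_3$ the condition $E_\sig(v) \geq 0$ implies $h_\sig(v) \geq 0$; taking the trace and using the identity from Section \ref{sec:DetD2P}, namely $\Tr(h_\sig(v)) = \frac{n+\sig}{\sig}\bigl(-(-\Delta)^{\sig/2} v\bigr)$, one gets $(-\Delta)^{\sig/2} v \leq 0$ in $D_3$, and the fractional maximum principle then yields $v \leq 0$ throughout $\real^n$. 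Taking the supremum over $v$ shows $\Gamma^\sig_w \leq 0$, hence $w(x) \leq 0$ on $K_w$, and therefore $u(x) = w(x) - \eta(x) \leq \norm{\eta}_\infty =: M$ on $K_w$. Since $B_{1/4} \subset Q_1$, this gives $\abs{\{u \leq M\} \cap Q_1} \geq c_0$, which proves the lemma with $\mu := c_0 / \abs{Q_1}$. The main technical point is the routine adaptation of the envelope machinery of Sections \ref{sec:Envelope}--\ref{sec:DetD2P} from $B_1$ to $B_{2\sqrt{n}}$, together with the fractional maximum principle invoked above; once these are in place the proof essentially mirrors the classical second-order argument.
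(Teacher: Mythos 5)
Your proof is correct and follows essentially the same route as the paper's: perturb by the barrier $\eta$ of Proposition \ref{prop:SpecialFunction}, apply the ABP estimate to $w=u+\eta$ on a ball containing $Q_3$, choose $\delta_0$ small to force $\abs{K_w\cap B_{1/4}}\geq c_0$, and read off $u\leq M=\norm{\eta}_\infty$ on the contact set. The only difference is that you explicitly justify the step ``$w\leq 0$ on $K_w$'' (via $\Gamma^\sig_w\leq 0$, using $\Tr(h_\sig(v))\geq 0$ and the fractional maximum principle), which the paper asserts in a single line; this fills in a detail rather than changing the argument.
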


\begin{proof}[Proof of Lemma \ref{lem:LEpsilonBaseCase}]
Consider the function $w=u+\eta$ ($\eta$ as in Proposition
\ref{prop:SpecialFunction}), then $w$ satisfies (in the viscosity
sense)
\begin{equation}\label{eq:LEpsilonMain}
\begin{cases}
\ds M^-(w,x)\leq f(x)+C_0\xi &\text{ in } B_{4\sqrt{n}}\\
w\geq 0 &\text{ on } \real^n\setminus B_{4\sqrt{n}}.
\end{cases}
\end{equation}
Moreover $-\inf \limits_{Q_3}\{w\} \geq 1$ since $u\leq 1$ somewhere
in $Q_3$ and $\eta \leq -2$ everywhere in $Q_3$. In this situation,
Theorem \ref{thm:Main} (rescaled to the ball $B_{4\sqrt{n}}$) tell us
that
\begin{equation*}
1 \leq C \lam^{-n}\left (1+C_0\right
)^{(2-\sig)/2}(\norm{f+C_0\xi}_{L^n(K_w)})^{\sig/2},
\end{equation*}
where we recall $K_\omega = \{ x \in B_{4\sqrt{n}}: w(x)=\Gamma_w(x)\}$, then
\begin{equation*}
C^{-2/\sigma}\lam^{2n/\sigma}(1+C_0)^{-(2-\sigma)/\sigma}\leq
\norm{f+C_0\xi}_{L^n(K_w)}.
\end{equation*}
Hence
\begin{equation}
 C^{-2/\sigma}\lam^{2n/\sigma}(1+C_0)^{-(2-\sigma)/\sigma}\leq
\delta_0+C_0|K_w \cap B_{1/4}|^{1/n}.
\end{equation}
One then sees that picking $\delta_0$ universally small one gets for a
universal $\mu \in (0,1)$ the lower bound
\begin{equation}
\mu |Q_1| \leq |K_w \cap B_{1/4}| \leq |K_w \cap Q_1|.
\end{equation}
Now $x \in K_w$ implies in particular that $w \leq 0$ therefore
$u\leq-\eta \leq M$, where $M=\sup |\eta|$. Since $K_w \cap Q_1
\subset \{x\in Q_1: u \leq M \} $ this last inequality proves the
lemma.
\end{proof}

As done in \cite[Chapter 4, Section 4.2]{CaCa-95} one can use the
Calder\'on-Zygmund decomposition and \ref{lem:LEpsilonBaseCase} to
prove the weak $L^\epsilon$ estimates we previously mentioned.

\begin{thm}[weak $L^\epsilon$ estimate]\label{thm:LEpsilonEstimate}
Let $u \geq 0$ in $\mathbb{R}^n$ be a supersolution of $M^-(u,x)\leq
f$ in $B_1$ (for $\sigma>\sigma_0$) and such that $u(0) \leq 1$.
Suppose that
\begin{equation*}
\|f\|_\infty \leq 1,\;\; \|f\|_{L^n(B_2)}\leq\delta_0,\;\;\sigma \in
(\sigma_0,2).
\end{equation*}
Then there are universal constants $C,\delta_0$ and $\epsilon$ (i.e.
determined by $n,\sigma_0,\lambda$ and $\Lambda$) such that for all
$t>0$ we have
\begin{equation}
\left | \left \{ u>t \right \} \cap B_{1/2} \right | \leq Ct^{-\epsilon}.
\end{equation}

\end{thm}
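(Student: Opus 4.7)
The plan is to follow the classical Caffarelli--Cabr\'e argument for the weak $L^\epsilon$ estimate (Lemmas 4.5--4.6 of \cite{CaCa-95}), as the authors indicate, using Lemma \ref{lem:LEpsilonBaseCase} as the base step. That lemma is a point-to-measure statement: if $u\ge 0$ satisfies the rescaled hypotheses and has infimum $\le 1$ on $Q_3$, then $\{u\le M\}$ occupies a definite fraction $\mu$ of $Q_1$. The goal is to propagate this fact across all dyadic scales via a Calder\'on--Zygmund decomposition and then extract the weak $L^\epsilon$ bound by elementary interpolation on super-level sets.

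First I would produce a rescaled, cube-based version of Lemma \ref{lem:LEpsilonBaseCase} applicable to any dyadic subcube $Q\subset Q_1$ of side $r$. Setting $w(x):=N^{-1}u(x_0+rx)$ for a normalization $N\ge 1$ and $x_0$ the center of $Q$, the $\sigma$-homogeneity of $M^-$ yields $M^-(w,x)\le \tilde f(x):=r^\sigma N^{-1}f(x_0+rx)$ globally, with $\|\tilde f\|_{L^\infty}\le 1$ and $\|\tilde f\|_{L^n}\le r^{\sigma-1}N^{-1}\|f\|_{L^n}$. Provided $N$ is chosen to absorb the factor $r^{\sigma-1}$, Lemma \ref{lem:LEpsilonBaseCase} applies to $w$, and its contrapositive reads: if no point of $3Q$ satisfies $u\le N$, then $|\{u\le MN\}\cap Q|\ge \mu|Q|$; equivalently, if $|\{u>MN\}\cap Q|>(1-\mu)|Q|$ then $u>N$ throughout a suitable dyadic predecessor $\tilde Q$ of $Q$.

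With this rescaled lemma I would run the dyadic Calder\'on--Zygmund decomposition of \cite[Lemma~4.5]{CaCa-95}. Applied inductively to the super-level sets $A_{M^k}:=\{u>M^k\}\cap Q_1$, it yields the geometric decay $|A_{M^k}\cap Q_1|\le (1-\mu)|A_{M^{k-1}}\cap Q_1|$ and hence $|A_{M^k}\cap Q_1|\le C(1-\mu)^k$. The base case $|A_1\cap Q_1|\le(1-\mu)|Q_1|$ is a single application of Lemma \ref{lem:LEpsilonBaseCase} using the hypothesis $u(0)\le 1$, and a fixed covering transfers the estimate from $Q_1$ to $B_{1/2}$. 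For $t\in[M^{k-1},M^k)$ we then obtain $|\{u>t\}\cap B_{1/2}|\le C t^{-\epsilon}$ with $\epsilon:=-\log(1-\mu)/\log M$, which is the desired bound.

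The main obstacle is the scale-sensitivity of $\|f\|_{L^n}$ under the $\sigma$-order rescaling. In the second-order limit $\sigma=2$ the factor $r^{\sigma-1}=r\le 1$ is harmless, but for $\sigma<1$ the factor $r^{\sigma-1}$ blows up as $r\to 0$, so naive rescaling destroys the smallness of $\|\tilde f\|_{L^n}$. The remedy is to couple the cube scale $r\sim 2^{-k}$ with the value scale $N=M^{k-1}$ and choose $M$ large in terms of $\sigma_0$: if $M\ge 2^{\,1-\sigma_0}$, then $r^{\sigma-1}N^{-1}$ remains uniformly bounded along the dyadic chain for every $\sigma\in(\sigma_0,2)$, and the iteration proceeds exactly as in the second-order proof. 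This uniformity in $\sigma$ is also what guarantees the estimate survives the passage $\sigma\to 2^-$, recovering the classical Krylov--Safonov result.
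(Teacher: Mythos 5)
Your overall strategy is exactly the one the paper intends: the paper offers no detailed proof of Theorem \ref{thm:LEpsilonEstimate}, saying only that one combines Lemma \ref{lem:LEpsilonBaseCase} with the Calder\'on--Zygmund decomposition as in \cite{CaCa-95}, and you correctly set up the rescaled base lemma, the dyadic iteration giving geometric decay of $|\{u>M^k\}\cap Q_1|$, and the passage to the $t^{-\epsilon}$ bound. You also deserve credit for isolating the one genuinely nonstandard point, namely that $\|\tilde f\|_{L^n}=r^{\sigma-1}N^{-1}\|f\|_{L^n}$ degenerates as $r\to0$ when $\sigma<1$. However, your remedy does not work. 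The Calder\'on--Zygmund covering argument does not couple the generation of the selected cubes with the dyadic level: to prove $|A_{M^k}|\leq(1-\mu)|A_{M^{k-1}}|$ one must verify the implication ``$|A_{M^k}\cap Q|>(1-\mu)|Q|\ \Rightarrow\ \tilde Q\subset A_{M^{k-1}}$'' for \emph{every} dyadic cube $Q\subset Q_1$, and already at level $k=2$ the cubes satisfying the density hypothesis can have arbitrarily small side $r=2^{-i}$ (think of a supersolution with a narrow spike). Hence $r^{\sigma-1}M^{-(k-1)}=2^{i(1-\sigma)}M^{-(k-1)}$ is not uniformly bounded under your coupling $r\sim 2^{-k}$, and the rescaled hypothesis $\|\tilde f\|_{L^n}\leq\delta_0$ fails exactly where you need it.

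Two standard repairs are available. The simplest uses the hypothesis $\|f\|_\infty\leq1$, which you have not exploited at this step: for any cube arising at level $k\geq2$, H\"older's inequality gives $\|\tilde f\|_{L^n(Q_{4\sqrt n})}\leq |Q_{4\sqrt n}|^{1/n}\,\|\tilde f\|_{\infty}\leq C(n)\,r^{\sigma}M^{-(k-1)}\leq C(n)\,M^{-(k-1)}$, so after replacing $M$ by a sufficiently large power $M^j$ (which only changes the final value of $\epsilon$) the rescaled $L^n$ norm is automatically below $\delta_0$ for all $k\geq2$, while the base case $k=1$ is the unrescaled Lemma \ref{lem:LEpsilonBaseCase} applied with $\|f\|_{L^n(B_2)}\leq\delta_0$ directly. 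Alternatively, one runs the maximal-function variant of the covering lemma familiar from the $W^{2,p}$ theory in \cite{CaCa-95}: the required local smallness is $\int_{B_{cr}(x_0)}f^n\leq c\,\delta_0^n M^{(k-1)n}r^{n(1-\sigma)}$, and writing $r^{n(1-\sigma)}=r^n\cdot r^{-n\sigma}$ with $r^{-n\sigma}\geq1$, it suffices to enlarge the target set by $\{x: m(f^n)(x)>c\,\delta_0^nM^{(k-1)n}\}$, where $m$ is the Hardy--Littlewood maximal operator; the measure of this exceptional set is $O(M^{-(k-1)n})$ by the weak $(1,1)$ estimate and is absorbed into the geometric decay. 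With either correction the remainder of your argument goes through.
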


With this lemma in hand, that viscosity solutions to
(\ref{eq:PIDEmain}) are H\"older continuous or that they satisfy a
Harnack inequality follows by standard arguments that can be found in
\cite[Theorem 4.3 and Proposition 4.10]{CaCa-95} for second order
equations and in \cite[Theorem 12.1 and Theorem 11.1]{CaSi-09RegularityIntegroDiff} for
integro-differential equations.
\section{Applications and Open Problems}\label{sec:Applications}
\setcounter{equation}{0}

Here we would like to make a few remarks about further research
directions and open questions where we anticipate Theorem
\ref{thm:Main} could be useful, some of which are in obvious analogy
to the second order theory. Accordingly, the discussion below is only suggestive but we include it with the hope of
stimulating further work.\\

\noindent
\textbf{Stochastic homogenization}. As pointed out in the
introduction, Theorem \ref{thm:Main} will play an important role in
the homogenization of stationary ergodic families of equations within
the ellipticity class governed by $M^-$, and this will be presented in
\cite{Schw-11Stoch}.\\

\noindent
\textbf{``$W^{\sig,\ep}$'' and ``$W^{\sig,p}$'' estimates.}  These would be analogous to the $W^{2,\ep}$ estimates of Caffarelli \cite[Chapter 7]{CaCa-95} and Lin \cite{Lin-1986SecondDerviLpEstimates} and subsequently the
$W^{2,p}$ theory of Caffarelli \cite{Caff-1989InteriorEstimates}.  Such estimates
would allow use of the regularity theory of
\cite{CaSi-09RegularityIntegroDiff,CaSi-09RegularityByApproximation,CaSi-09EvansKrylov} to
obtain $W^{\sig,p}$ regularity for viscosity solutions of
(\ref{eq:PIDEgeneral}) in terms of the $L^p$ norm of the right hand
side $f$. It is important to remark that such estimates are not yet
available for any kind of fully nonlinear equation of fractional
order.\\

\noindent
\textbf{More general equations}. Comparison and regularity theory for
more general equations where their ellipticity is considered on
choices of $L_A$ (with varying scopes of generality) such as:
\begin{equation}
L_A(v,x) =(2-\sig)\int_{\real^n}\del u(x,y)a(x,y)\abs{y}^{-n-\sig}dy
\end{equation}
where $a(x,y)$ is homogeneous of degree $0$ in $y$,
\begin{equation}
L_A(v,x) =(2-\sig)\int_{\real^n}\del u(x,y)(y^TA(x)y) n(dy)
\end{equation}
where $n(dy)$ is comparable to $\abs{y}^{-n-\sig}dy$ only \emph{in
some subset} of $\real^n$ (indicated to the authors in \cite{Kass-2010PersonalComm}),
\begin{equation}
L_A(v,x) =(2-\sig)\int_{\real^n}\del u(x,y)\frac{a(x,y)}{\abs{y}^{n+\sig}}dy,\;\;\; \lam \leq a \leq \Lam
\end{equation}
which corresponds to the family in \cite{CaSi-09RegularityIntegroDiff}, and
\begin{equation}
L_A(v,x) =(2-\sig)\int_{\real^n}\del u(x,y)n(x,dy)
\end{equation}
where $n(x,dy)$ (for each $x$) is again a measure comparable to
$\abs{y}^{-n-\sig}dy$ only in some subsets of $\real^n$ (for example,
along certain directions through the origin).\\

\noindent
\textbf{The ``right'' nonlocal Monge-Amp\'ere equation.} It is
worthwhile to find out whether another notion of nonlocal determinant
can be found which is both extremal and carries a geometric
interpretation (or ``divergence structure'') such that it achieves the
key features listed in Section
\ref{sec:Sketch}--(\ref{eq:SketchIntegralDsigma}),
(\ref{eq:SketchDsigmaMminus}), and
(\ref{eq:SketchDsigmaOffContactSet})-- for a more general family of
equations,  such as those considered in \cite{CaSi-09RegularityIntegroDiff}.\\


\section{Appendix-- Concave Nonlocal Obstacle Problems}\label{sec:Appendix}
\setcounter{equation}{0}

In this section, we develop some regularity results for solutions to obstacle problems with concave nonlocal operators.  We believe these results may be of independent interest and have hence chosen to present them in a generic form, but the example we actually use in this work is for the operator, $E_\sig$.  The results in this section are nonlocal analogs to the results of \cite{BrKi-1975RegNonLinVarIneq} and \cite{LeSt-1971SmoothnessNonCoerciveVarIneq}; the interested reader can also find them presented in the book \cite[Chapter IV]{KiSt-1980BookIntroVarIneq}.

First we list some notation and assumptions for this section:
\begin{enumerate}
\item $\psi:B_3\to\real$ is the obstacle function with $\psi\leq0\in B_3$ and $\psi\geq0\in \real^n\setminus B_3$.
\item $F$ is a degenerate elliptic, \textbf{translation invariant}, nonlocal operator which is concave with respect to $v$ in the sense that there is a collection of linear $\sig$-order operators, $\L$, and $F$ can be represented as
\begin{equation}\label{eq:AppendixConcave}
F(v,x) = \inf_{L\in\L}\{L(v,x)\}.
\end{equation}
\item $F$ satisfies a non-degeneracy type condition as
\begin{equation}\label{eq:AppendixFBelowLaplace}
-\Laplace{\sig/2}v(x)\geq F(v,x).	
\end{equation}
\item $\ds \beta_\ep(s)=\beta(\frac{s}{\ep})$ where $\beta:\real\to\real$ is convex, non-negative, decreasing, $\beta(0)\geq \sup_{x\in B_3}F(\psi,x)$, and $\beta(s)=0$ for $s\geq1$.
\item $\Omega\subset\subset B_3$.
\end{enumerate}

The main function of interest for this section is the solution to the obstacle problem, $v$, solving
\begin{equation}\label{eq:AppendixObstacleMain}
v=\sup\{w\ :\ F(w,x)\geq0\ \text{in}\ B_3\ \text{and}\ w\leq \psi\Indicator_{B_3}\ \text{in}\ \real^n\}.
\end{equation}
\noindent
It is standard that $v$ is the unique viscosity solution of the equation

\begin{equation}\label{eq:AppendixObstacleViscosityEq}
\begin{cases}
\ds \min\{F(v,x),u-v\} = 0 &\text{ in } B_3\\
v=0 &\text{ on } \real^n\setminus B_3.
\end{cases}
\end{equation}

\noindent
One of the main tools for investigating (\ref{eq:AppendixObstacleMain}) is the penalized equation, which also admits a unique viscosity solution:
\begin{equation}\label{eq:AppendixPenalized}
\begin{cases}
\ds F(v^\ep,x) = \beta_\ep(\psi-v^\ep) &\text{ in } B_3\\
v^\ep=0 &\text{ on } \real^n\setminus B_3.
\end{cases}
\end{equation}

\noindent
Finally, in order to make some arguments easier by working with classical solutions, we will use the regularized, penalized equation:
\begin{equation}\label{eq:AppendixRegPenalized}
\begin{cases}
\ds F(v^{\ep\rho},x) +\rho\Delta v^{\ep\rho} = \beta_\ep(\psi-v^{\ep\rho}) &\text{ in } B_3\\
v^{\ep\rho}=0 &\text{ on } \real^n\setminus B_3.
\end{cases}
\end{equation}

The main result of this section says that the solutions of these obstacle problems not only inherit the regularity from above of the obstacle, but in fact they are even regularizing in the sense that they obey interior regularity from below as well, which may in fact be more regular than the obstacle, $\psi$.

\begin{prop}\label{prop:BrezisKinderNonlocal}	
If $\psi$ satisfies classically $-\Laplace{\sig/2}\psi(\cdot)\leq C$ and $\Om\subset\subset B_3$ then there exists a constant, $C_1$, depending only on $C$, $\sig$, $n$, $dist(\Om,\partial B_3)$, and $\norm{\psi}_{L^\infty}$ such that for a.e. $x\in\Om$,
$v$ satisfies 
\[
\int_{\real^n}\frac{\abs{\del\Gamma(x,y)}}{\abs{y}^{n+\sig}}dy <\infty
\]
and
\[
0\leq -\Laplace{\sig/2}v(x)\leq C_1\ \ \text{and}\ \ \norm{\Gamma}_{H^\sig(\Om)}\leq C_1.
\]
\end{prop}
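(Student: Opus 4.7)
My plan is to adapt the Brezis-Kinderlehrer regularization-penalization scheme to the concave nonlocal setting. Working directly with the viscosity solution $v^\epsilon$ of the penalized problem (\ref{eq:AppendixPenalized})---whose existence and uniqueness are standard once a comparison principle for $F$ with bounded Lipschitz zeroth-order perturbation is available---I would pass to the obstacle-problem solution $v$ in the limit $\epsilon\to 0$. The regularized equation (\ref{eq:AppendixRegPenalized}) serves as a secondary approximation to legitimize computations that require extra smoothness.

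The central estimate is the uniform bound $\beta_\epsilon(\psi-v^\epsilon)\leq\beta(0)$. Consider the maximum of $v^\epsilon-\psi$ on $\overline{B_3}$. If attained on $\partial B_3$, then $v^\epsilon=0\leq\psi$ there, forcing $v^\epsilon\leq\psi$ globally and making the bound trivial. If attained at an interior $x_0$, the translate $\phi(x):=\psi(x)+(v^\epsilon-\psi)(x_0)$ touches $v^\epsilon$ from above at $x_0$; since $-\Laplace{\sig/2}\psi\leq C$ classically the integrals defining $F(\psi,\cdot)$ converge, $\phi$ is a legitimate viscosity test function, and the subsolution inequality combined with the constant-killing property of each $L\in\L$ gives $F(\psi,x_0)\geq\beta_\epsilon(\psi(x_0)-v^\epsilon(x_0))$. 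Invoking (\ref{eq:AppendixFBelowLaplace}) then yields
\[
\beta_\epsilon\bigl(\psi(x_0)-v^\epsilon(x_0)\bigr)\leq F(\psi,x_0)\leq -\Laplace{\sig/2}\psi(x_0)\leq C\leq \beta(0).
\]
Monotonicity of $\beta_\epsilon$ propagates this bound to every $x\in B_3$, and strict monotonicity of $\beta$ near $0$ at the interior-max point forces $v^\epsilon\leq\psi$ globally.

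The pointwise bound on $-\Laplace{\sig/2}v$ would follow by splitting $\Omega$ according to coincidence with $\psi$. On the contact set $\{v=\psi\}$, the global inequality $v\leq\psi$ with pointwise equality yields $\delta v(x,y)\leq \delta\psi(x,y)$ for every $y$, so $-\Laplace{\sig/2}v(x)\leq -\Laplace{\sig/2}\psi(x)\leq C$; together with the lower bound $-\Laplace{\sig/2}v\geq F(v)\geq 0$ coming from (\ref{eq:AppendixFBelowLaplace}) and the variational inequality, this gives the two-sided estimate on contact points. On the complementary open set $\{v<\psi\}$, $v$ classically solves $F(v)=0$, and interior regularity theory for concave nonlocal operators (Caffarelli-Silvestre), applied uniformly along the penalized approximations $v^\epsilon$ and passed to the limit, delivers locally uniform $C^{\sig+\alpha}$ bounds and hence a bound on $-\Laplace{\sig/2}v$. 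The $H^\sig(\Omega)$ estimate and the integrability of the second differences of $v$ then follow from the resulting bounded fractional Laplacian via standard Riesz potential estimates on the bounded domain $\Omega$.

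The hard part is that the $L^\infty$ bound on the nonlinear quantity $F(v^\epsilon)$ does not pass directly to an $L^\infty$ bound on the linear quantity $-\Laplace{\sig/2}v^\epsilon$, because hypothesis (\ref{eq:AppendixFBelowLaplace}) runs the wrong way for that implication. The argument therefore genuinely relies on the obstacle-problem structure: the global inequality $v\leq\psi$ together with equality at contact controls second differences on one piece, and the homogeneous equation $F(v)=0$ controls regularity on the other piece. The most delicate technical point is uniform control across the free boundary, where the smoothness of the penalized family $\{v^\epsilon\}$ and the stability of viscosity solutions under $\epsilon\to 0$ are essential.
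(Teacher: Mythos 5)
Your decomposition into the contact set and the non-contact set does not produce the uniform constant $C_1$ that the proposition asserts, and the step you defer at the end---``uniform control across the free boundary''---is the entire content of the result, not a technical afterthought. On the contact set your argument is correct: $v\leq\psi$ globally together with $v(x)=\psi(x)$ gives $\del v(x,y)\leq\del\psi(x,y)$ for every $y$, hence $-\Laplace{\sig/2}v(x)\leq C$, and the lower bound comes from (\ref{eq:AppendixFBelowLaplace}). But on $\{v<\psi\}$ you appeal to interior regularity for $F(v)=0$; those estimates are interior to the open set $\{v<\psi\}$ and degenerate as $x$ approaches the free boundary, so they cannot yield a bound depending only on $C$, $\sig$, $n$, $dist(\Om,\partial B_3)$ and $\norm{\psi}_{L^\infty}$. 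Worse, the hypotheses of the appendix do not include uniform ellipticity of the class $\L$ (in the intended application $F=E_\sig$ the kernels $(y\cdot\tau)^2\abs{y}^{-n-\sig-2}$ vanish on an entire hyperplane of directions), so the Caffarelli--Silvestre $C^{\sig+\al}$ theory is not available; and applying it instead to the penalized equations, whose right-hand sides $\beta_\ep(\psi-v^\ep)$ are merely bounded, would not give a pointwise bound on $-\Laplace{\sig/2}v^\ep$ either. Obtaining a bound on $-\Laplace{\sig/2}v$ that is uniform up to the free boundary is an optimal-regularity statement for the obstacle problem, which is precisely what one cannot assume here.

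The missing mechanism is the actual Brezis--Kinderlehrer step, which you name at the outset but never execute, and which the paper carries out in Lemma \ref{lem:AppendixRegPenalizedEstimate}: apply a truncated linear operator $\tilde L$ with kernel $\abs{y}^{-n-\sig}\Indicator_{\abs{y}\leq s}$ to the regularized penalized equation (\ref{eq:AppendixRegPenalized}); use concavity and translation invariance of $F$ to show that $w=\tilde L v^{\ep\rho}$ is a subsolution of the (extremal) linearized equation with right-hand side $\tilde L\bigl(\beta_\ep(\psi-v^{\ep\rho})\bigr)$; use convexity of $\beta_\ep$ to bound that right-hand side from below by $-\norm{\beta_\ep'}_\infty\,\tilde L(\psi-v^{\ep\rho})$; and evaluate at a maximum point of $w$, where every operator with positive kernel and $\Delta w$ are nonpositive. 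This gives $\max\tilde L v^{\ep\rho}\leq\max\tilde L\psi$ uniformly in $\ep$ and $\rho$, straight across the free boundary, and the two-sided bound on $-\Laplace{\sig/2}v$ follows by adding back the tail of the kernel (controlled by $\norm{\psi}_{L^\infty}$) and passing to the limit. Without this maximum-principle argument on $\tilde L v^{\ep\rho}$---or some substitute delivering optimal regularity up to the free boundary---your proof does not close.
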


\begin{rem}
It is worth pointing out that the same results of this section hold \textit{mutatis mutandi} if one replaces the operator $-\Laplace{\sig/2}$ by any other linear, translation invariant, $\sig$-order operator (and its corresponding kernel, $K$) which has similar regularizing properties to those of $-\Laplace{\sig/2}$ used in the proof of Lemma \ref{lem:AppendixVepCovergeToV}.  
\end{rem}

\noindent
In fact, if $\psi$ has enough regularity such that for all of the translation invariant $L$ in the ellipticity class of \cite[Section 3]{CaSi-09RegularityIntegroDiff} (the operators governed by the extremal operator of (\ref{eq:MminusDefCaSi})) it holds that $L(\psi,x)\leq C$, then a much stronger result holds.  The proof is a simple calculation, and is found in \cite[Section 7]{CaSi-09EvansKrylov}, but we include it here for completeness.

\begin{cor}\label{cor:AppendixAllOperators}
Let $\L$ be the collection of linear operators, $L$, of the form 
\[
L(w,x) = \int_{\real^n}\del w(x,y)K(y)dy\ \ \text{where}\ K\ \text{is measurable and}\ \ \frac{\lam}{\abs{y}^{n+\sig}}\leq K(y)\leq \frac{\Lam}{\abs{y}^{n+\sig}}.
\]
If $\psi$ satisfies $L(\psi,x)\leq C$ for all $L\in\L$, then there exists a constant, $C_1$, which depends only on $C$, $\sig$, $n$, $\lam$, $\Lam$, $dist(\Om,\partial B_3)$, and $\norm{\psi}_{L^\infty}$ such that for a.e. $x\in\Om\subset\subset B_3$,
\[
\int_{\real^n}\frac{\abs{\del\Gamma(x,y)}}{\abs{y}^{n+\sig}}dy \leq C_1.
\]
\end{cor}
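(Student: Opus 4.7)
Writing $v:=\Gamma$ for the solution of the obstacle problem, the plan is to combine the two-sided scalar bound $0\leq -(-\Delta)^{\sigma/2}v(x)\leq C_1$ provided by Proposition \ref{prop:BrezisKinderNonlocal} with a Caffarelli--Silvestre upper extremal bound on $v$ transferred from $\psi$ at coincidence points. The strengthened hypothesis on $\psi$ is exactly
\[
\Lambda\int_{\real^n}\frac{(\delta\psi(x,y))^+}{|y|^{n+\sigma}}\,dy-\lambda\int_{\real^n}\frac{(\delta\psi(x,y))^-}{|y|^{n+\sigma}}\,dy=\sup_{L\in\mathcal{L}}L(\psi,x)\leq C,
\]
so the task is to prove the same extremal inequality for $v$ and then combine the two bounds algebraically.

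At any coincidence point $x_0$ (where $v(x_0)=\psi(x_0)$), the global obstacle inequality $v\leq\psi$ forces $\delta v(x_0,y)\leq\delta\psi(x_0,y)$ for every $y\in\real^n$. Since every kernel appearing in $\mathcal{L}$ is non-negative, this pointwise comparison integrates to $L(v,x_0)\leq L(\psi,x_0)\leq C$ for every $L\in\mathcal{L}$, and taking the supremum over $\mathcal{L}$ yields
\[
\Lambda\int_{\real^n}\frac{(\delta v(x_0,y))^+}{|y|^{n+\sigma}}\,dy-\lambda\int_{\real^n}\frac{(\delta v(x_0,y))^-}{|y|^{n+\sigma}}\,dy\leq C.
\]
Proposition \ref{prop:BrezisKinderNonlocal} moreover supplies, at a.e. $x_0\in\Omega$, the non-negativity $\int\delta v/|y|^{n+\sigma}\,dy\geq 0$, i.e. $\int(\delta v)^+/|y|^{n+\sigma}\,dy\geq\int(\delta v)^-/|y|^{n+\sigma}\,dy$. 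Substituting this into the previous display forces $(\Lambda-\lambda)\int(\delta v)^-/|y|^{n+\sigma}\,dy\leq C$, so the negative part is controlled; the companion upper bound $\int\delta v/|y|^{n+\sigma}\,dy\leq C_1'$ from the Proposition then controls the positive part via $\int(\delta v)^+\leq C_1'+\int(\delta v)^-$. Summing the two pieces produces the asserted $L^1$ bound at every coincidence point.

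To pass from the coincidence set to a.e. $x_0\in\Omega$, the natural implementation is to run the preceding argument on the regularized penalized approximations $v^{\epsilon\rho}$ from (\ref{eq:AppendixRegPenalized}). These are classical $C^2$ solutions, and the hypothesis on $\psi$ propagates to a uniform-in-$\epsilon,\rho$ upper extremal bound $\sup_{L\in\mathcal{L}}L(v^{\epsilon\rho},x)\leq C'$ at every $x\in\Omega$ via a touching argument applied to $v^{\epsilon\rho}-\psi$ that exploits the fact that the penalty $\beta_\epsilon(\psi-v^{\epsilon\rho})$ is non-negative and uniformly bounded by $\beta(0)$. Combined with Proposition \ref{prop:BrezisKinderNonlocal} applied to $v^{\epsilon\rho}$, the two-line arithmetic from the previous paragraph then yields $\int|\delta v^{\epsilon\rho}|/|y|^{n+\sigma}\,dy\leq C_1$ uniformly, and the bound passes to $v$ by Fatou's lemma after letting $\rho\to 0$ and then $\epsilon\to 0$.

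The main obstacle is this last step. The comparison $\delta v\leq\delta\psi$ is available only at coincidence points, so one cannot literally touch $\psi$ with $v$ at a general $x_0\in\Omega\setminus K_u$; the algebraic combination, by contrast, is essentially formal once the extremal bound is in hand. The penalized-regularized route is attractive because the penalized equation has no free boundary, so the comparison between $v^{\epsilon\rho}$ and $\psi$ can be made uniform in $x$, and the one-signed nature of the penalty is exactly what keeps the extremal upper bound on the correct side of the inequality when passing to the limit.
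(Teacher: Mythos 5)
Your algebraic endgame coincides with the paper's: once one has $\sup_{L\in\L}L(v,x)\le C_1$ together with $0\le-\Laplace{\sig/2}v(x)\le C_1$ at a.e.\ $x\in\Om$, the two inequalities $\int(\Lam(\del v)^+-\lam(\del v)^-)\abs{y}^{-n-\sig}dy\le C_1$ and $\int((\del v)^+-(\del v)^-)\abs{y}^{-n-\sig}dy\ge0$ isolate $(\del v)^+$ and $(\del v)^-$ separately (the paper bounds $(\del v)^+$ first, you bound $(\del v)^-$ first; both work and both tacitly need $\Lam>\lam$). The genuine gap is in how you obtain the extremal upper bound at \emph{a.e.} $x\in\Om$ rather than only on the contact set. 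Your first argument --- $\del v(x_0,y)\le\del\psi(x_0,y)$ at coincidence points, hence $L(v,x_0)\le L(\psi,x_0)$ --- is correct but covers only $K_u\cap\Om$, and $\Om$ is an arbitrary compact subdomain, not a subset of the contact set. Your proposed repair, ``a touching argument applied to $v^{\ep\rho}-\psi$'' exploiting $0\le\beta_\ep\le\beta(0)$, does not close the gap: a touching or maximum argument on $v^{\ep\rho}-\psi$ produces information only at the touching point, and the penalized equation controls $F(v^{\ep\rho},x)=\inf L'(v^{\ep\rho},x)$, an \emph{infimum} over the operators defining $F$ (which, for $F=E_\sig$, are not even members of $\L$), whereas what you need is the \emph{supremum} $\sup_{L\in\L}L(v^{\ep\rho},x)$ at every $x$.

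The mechanism the paper actually uses is different and is the real content of the appendix: Proposition \ref{prop:BrezisKinderNonlocal} is applied with $-\Laplace{\sig/2}$ replaced by each translation-invariant $L\in\L$ (as licensed by the remark preceding the corollary), and its proof rests on Lemma \ref{lem:AppendixRegPenalizedEstimate}, a Bernstein-type maximum principle applied to the function $w=\tilde L(v^{\ep\rho},\cdot)$ itself. Concavity of $F$ and translation invariance of $L$ make $w$ satisfy $F(w,x)+\rho\Delta w(x)\ge\tilde L(\beta_\ep(\psi-v^{\ep\rho}),x)$; convexity of $\beta$ gives $\tilde L(\beta_\ep(\psi-v^{\ep\rho}),x)\ge-C_\ep\,\tilde L(\psi-v^{\ep\rho},x)$ with $C_\ep\ge0$; and evaluating at an interior maximum of $w$, where $F(w)\le0$ and $\Delta w\le0$, yields $\max\tilde L(v^{\ep\rho},\cdot)\le\max\tilde L(\psi,\cdot)$ --- a bound valid at \emph{every} point, which then survives the limits $\rho\to0$ and $\ep\to0$. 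The two structural hypotheses that make this work, the concavity of $F$ as in (\ref{eq:AppendixConcave}) and the convexity of the penalty $\beta$, appear nowhere in your sketch; without an argument of this strength the key input to your (otherwise correct) arithmetic is unproved.
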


\begin{proof}[Proof of Corollary \ref{cor:AppendixAllOperators}]
Proposition \ref{prop:BrezisKinderNonlocal} (modified for $L$), says that for all $L\in\L$, 
\[
0\leq L(v,x)\leq C_1,
\]
which in particular implies (see \cite[Section 3]{CaSi-09RegularityIntegroDiff})
\[
\int_{\real^n}\frac{\Lam\left(\del v(x,y)\right)^+-\lam\left(\del v(x,y)\right)^-}{\abs{y}^{n+\sig}}dy\leq C_1.
\]
On the other hand by (\ref{eq:AppendixFBelowLaplace}) and (\ref{eq:AppendixObstacleMain}), we have that 
\[
0\leq -\Laplace{\sig/2}v(x) = \int_{\real^n}\frac{\del v(x,y)}{\abs{y}^{n+\sig}}dy,
\]
and hence multiplying through by $-\lam$
\[
\int_{\real^n}\frac{-\lam\left(\del v(x,y)\right)^+ + \lam\left(\del v(x,y)\right)^-}{\abs{y}^{n+\sig}}dy\leq 0.
\]
Appropriately adding together these estimates gives and estimate on $(\del v(x,y))^+\abs{y}^{-n-\sig}$, which can be plugged back into the estimate $0\leq-\Laplace{\sig/2}v(x)$ to conclude the Corollary.
\end{proof}

\noindent
The proof of Proposition \ref{prop:BrezisKinderNonlocal} is broken up into multiple pieces which are worth recording in their own right as Lemmas \ref{lem:AppendixRegPenalizedEstimate} and \ref{lem:AppendixVepCovergeToV}.  Once these results are proved, Proposition follows immediately.

\begin{lem}\label{lem:AppendixRegPenalizedEstimate}
Let $s>0$ be fixed with $s<dist(\Om,B_3)$, and let the truncated kernel and operator be defined as
\begin{equation}\label{eq:AppendixTruncatedLaplace}
\tilde K(y):= \abs{y}^{-n-\sig}\Indicator_{\abs{y}\leq s}\ \ \text{and}\ \ \Tilde L(u,x) := \int_{\real^n}\del u(x,y)\tilde K(y)dy.
\end{equation}
Let $v^{\ep\rho}$ be the unique smooth solution to the regularized and penalized Dirichlet problem (\ref{eq:AppendixRegPenalized}) and $v^\ep$ the unique solution of (\ref{eq:AppendixPenalized}). Then $v^{\ep\rho}\to v^\ep$ as $\rho\to0$ locally uniformly, $v^{\ep\rho}$ satisfies the estimate
\begin{equation}\label{eq:AppendixFracLaplaceBoundRegularized}
\max_{x\in B_3}\{\tilde L(v^{\ep\rho},x)\}\leq \max_{x\in B_3}\{\tilde L(\psi,x)\},
\end{equation} 
and $v^\ep$ satisfies in the viscosity sense
\begin{equation}\label{eq:AppendixFracLaplaceBoundVEp}
\tilde L(v^{\ep},x)\leq\max_{x\in B_3}\{\tilde L(\psi,x)\}.
\end{equation}
\end{lem}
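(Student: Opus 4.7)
\textbf{The plan} is to handle the three assertions in the natural order. First I would establish the vanishing-viscosity convergence $v^{\ep\rho}\to v^\ep$ as $\rho\to 0$: a $\rho$-independent $L^\infty$ bound on $v^{\ep\rho}$ would follow from comparison with the constants $\pm\|\psi\|_\infty$ (using translation invariance of $F$ so that $F(c,\cdot)\equiv 0$, together with $\beta_\ep\ge 0$), and the half-relaxed limits $\bar v=\limsup^{*}v^{\ep\rho}$, $\underline v=\liminf_{*}v^{\ep\rho}$ would be respectively a sub- and supersolution of \eqref{eq:AppendixPenalized} by the standard stability theory for nonlocal viscosity solutions; uniqueness for the penalized equation would then force $\bar v=\underline v=v^\ep$, giving locally uniform convergence.

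\textbf{The main step} is the pointwise bound \eqref{eq:AppendixFracLaplaceBoundRegularized} for the smooth $v^{\ep\rho}$, which I would derive by ``differentiating the equation'' with a second difference. For $|z|\le s$ set $w^z(x):=\delta v^{\ep\rho}(x,z)$ and $\phi(x):=\psi(x)-v^{\ep\rho}(x)$. Since $F=\inf_{L\in\L}L$ is concave and translation invariant, at each $x$ I can pick $L^{*}_x\in\L$ with $L^{*}_x(v^{\ep\rho},x)=F(v^{\ep\rho},x)$ and $F(u,x)\le L^{*}_x(u,x)$ for every admissible $u$. Evaluating \eqref{eq:AppendixRegPenalized} at $x\pm z$, averaging, and applying concavity of $F$ to the translated functions $v^{\ep\rho}(\cdot\pm z)$ (whose pointwise average is $v^{\ep\rho}+\tfrac12 w^z$), combined with the equation at $x$ itself, will yield
\begin{equation*}
L^{*}_x(w^z,x)+\rho\,\Delta w^z(x)\ \ge\ \delta(\beta_\ep\!\circ\!\phi)(x,z).
\end{equation*}
The tangent-line inequality for the convex function $\beta_\ep$ then gives $\delta(\beta_\ep\!\circ\!\phi)(x,z)\ge\beta_\ep'(\phi(x))\,\delta\phi(x,z)$; writing $\delta\phi=\delta\psi-w^z$ and setting $c(x):=-\beta_\ep'(\phi(x))\ge 0$, this rearranges to $L^{*}_x(w^z,x)+\rho\,\Delta w^z(x)+c(x)\,w^z(x)\ge -c(x)\,\delta\psi(x,z)$. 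Multiplying by the truncated kernel $\tilde K(z)$, integrating in $z$, and using Fubini to commute the linear operators $L^{*}_x$ and $\Delta_x$ with the $z$-integral then produces, for $h:=\tilde L v^{\ep\rho}$ and $H:=\tilde L\psi$, the elliptic inequality
\begin{equation*}
L^{*}_x(h,x)+\rho\,\Delta h(x)\ \ge\ c(x)\bigl(h(x)-H(x)\bigr)\qquad\text{in }B_3.
\end{equation*}
A comparison of $v^{\ep\rho}$ against the constant function $0$ in \eqref{eq:AppendixRegPenalized}, using $F(0,\cdot)=0$ together with $\beta(0)\ge\sup_{B_3}F(\psi)$, gives $v^{\ep\rho}\le 0$ in $B_3$; hence $h\le 0$ on $\real^n\setminus B_3$ and on $\partial B_3$. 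If $h$ exceeded $M:=\sup_{B_3}H$ at some point in $B_3$, its maximum over $\bar B_3$ would be attained at an interior $x_0\in B_3$, would simultaneously be a global maximum on $\real^n$, and would therefore satisfy $L^{*}_{x_0}(h,x_0)\le 0$ and $\Delta h(x_0)\le 0$, while the right-hand side $c(x_0)(h(x_0)-H(x_0))>0$. This contradiction proves \eqref{eq:AppendixFracLaplaceBoundRegularized}. The viscosity bound \eqref{eq:AppendixFracLaplaceBoundVEp} for $v^\ep$ would then follow by letting $\rho\to 0$ and using the stability of viscosity inequalities for integro-differential operators.

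\textbf{The hardest part} is the maximum-principle argument above: to conclude $L^{*}_{x_0}(h,x_0)\le 0$ one needs the interior maximum of $h$ on $\bar B_3$ to be a \emph{global} maximum on $\real^n$, which is what forces the a priori sign $v^{\ep\rho}\le 0$ to be established and therefore relies on the compatibility condition $\beta(0)\ge\sup F(\psi)$ built into the hypotheses on $\beta$. Handling the case $M<0$, where the maximum of $h$ need not be strictly positive, will call for a Stampacchia-type multiplication by $(h-M)^+$ to localize the argument to the level set $\{h>M\}$; this is a minor technical refinement but is what keeps the proof from being a completely routine application of the nonlocal maximum principle.
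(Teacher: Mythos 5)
Your overall strategy is the same as the paper's: differentiate the regularized penalized equation with second differences, use concavity of $F$ (via a linearization $L^*_x$ achieving, or nearly achieving, the infimum) and convexity of $\beta_\ep$, integrate against the truncated kernel, and run a maximum principle for $h=\tilde L v^{\ep\rho}$; the convergence $v^{\ep\rho}\to v^\ep$ and the passage to (\ref{eq:AppendixFracLaplaceBoundVEp}) by stability are also handled as in the paper. Two small points first: your intermediate display should read $L^*_x(w^z,x)+\rho\Delta w^z(x)-c(x)w^z(x)\geq -c(x)\del\psi(x,z)$ (the sign of the zeroth-order term is flipped; your final integrated inequality $L^*_x(h,x)+\rho\Delta h(x)\geq c(x)(h(x)-H(x))$ is nevertheless the correct one), and the case $M<0$ that worries you at the end never occurs: at the global minimum of $\psi$ (which lies in $B_3$ since $\psi\leq 0$ there and $\psi\geq 0$ outside) one has $\del\psi\geq 0$, hence $M=\sup_{B_3}\tilde L\psi\geq 0$, so an interior value of $h$ exceeding $M$ is automatically a global maximum once $v^{\ep\rho}\leq 0$ is known.

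The genuine gap is in the concluding contradiction. You need the right-hand side $c(x_0)\bigl(h(x_0)-H(x_0)\bigr)$ to be strictly positive at the maximum point $x_0$, but $c(x_0)=-\beta_\ep'\bigl(\psi(x_0)-v^{\ep\rho}(x_0)\bigr)$ vanishes wherever the penalization is inactive, i.e.\ wherever $\psi-v^{\ep\rho}\geq\ep$, and there is no reason for the maximum of $h$ to fall in the (approximate) contact region where $c>0$. Your argument as written only shows that \emph{if} $\max h>M$ then $c(x_0)=0$ at the maximizer. At such a point one does have $\beta_\ep(\psi(x_0)-v^{\ep\rho}(x_0))=0$ (a convex, decreasing, nonnegative $\beta$ with vanishing derivative must vanish there), hence $\del(\beta_\ep\circ(\psi-v^{\ep\rho}))(x_0,z)\geq 0$ for all $z$ and therefore $L^*_{x_0}(h,x_0)+\rho\Delta h(x_0)\geq 0$; combined with the reverse inequalities at a global maximum this forces $\Delta h(x_0)=0$ and $L^*_{x_0}(h,x_0)=0$, which is the equality case of the maximum principle and does not by itself yield a contradiction. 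Closing this requires an extra step — e.g.\ a strict-subsolution perturbation of $h$ exploiting the uniformly elliptic term $\rho\Delta$, or a strong maximum principle — or else restructuring the convexity step as the paper does, estimating $\del(\beta_\ep\circ(\psi-v^{\ep\rho}))$ with the global, strictly positive Lipschitz constant $\norm{\beta_\ep'}_{\infty}$ so that the offending coefficient can be divided out at $x_0$ regardless of whether the penalization is active there.
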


\begin{proof}[Proof of Lemma \ref{lem:AppendixRegPenalizedEstimate}]

The whole point of working with $v^{\ep\rho}$ is simply to have a classical solution to manipulate more easily.    By the concavity of $F$, as (\ref{eq:AppendixConcave}), and the linearity of $\tilde L$, the functions $w:=\tilde L(v^{\ep\rho})$ satisfy in $\Om$ the equation
\begin{equation}\label{eq:AppendixConvexityFracLaplace}
F(w,x)+\Delta w(x)\geq \tilde L(\beta_\ep(\psi-v^{\ep\rho}),x).
\end{equation}
The main point of choosing $\beta$ to be convex is the fact that this implies
\begin{equation}\label{eq:AppendixLbetaIneq}
\tilde L(\beta_\ep(\psi-v^{\ep\rho}),x)\geq -\norm{\beta_\ep}_{L^\infty} \tilde L((\psi-v^{\ep\rho}),x).
\end{equation}
To see this, we note that convexity gives
\[
\beta_\ep\left((\psi-v^{\ep\rho})(x+y)\right)+\beta_\ep\left((\psi-v^{\ep\rho})(x-y)\right)\geq 2\beta_\ep\left(\frac{1}{2}(\psi-v^{\ep\rho})(x+y)+\frac{1}{2}(\psi-v^{\ep\rho})(x-y)\right),
\]
which after subtracting $2(\psi-v^{\ep\rho})(x)$ and estimating with the Lipschitz norm of $\beta_\ep$ gives
\[
\del\beta_\ep \circ(\psi-v^{\ep\rho})(x,y)\geq -\norm{\beta'_\ep}_\infty\left(\del(\psi-v^{\ep\rho})(x,y)\right).
\]
Thus (\ref{eq:AppendixLbetaIneq}) follows.  Therefore, plugging back into (\ref{eq:AppendixConvexityFracLaplace}) gives
\begin{equation*}
F(w,x)+\Delta w(x)\geq -\norm{\beta_\ep}_{L^\infty} \tilde L((\psi-v^{\ep\rho}),x),
\end{equation*}
which can be evaluated at $x_\text{max}$ such that $w(x_\text{max})=\max\{w\}$ to give
\begin{equation*}
0\geq F(w,x_\text{max})+\Delta w(x_\text{max})\geq -\norm{\beta_\ep}_{L^\infty} \tilde L((\psi-v^{\ep\rho}),x_\text{max}).
\end{equation*}
We note the use of the simple fact that at a maximum point, both $F(w)$ and $\Delta w$ are nonpositive.
Therefore, multiplying through by $-\norm{\beta_\ep}_{L^\infty}$ gives (\ref{eq:AppendixFracLaplaceBoundRegularized}).

The fact that (\ref{eq:AppendixPenalized}) admits a unique viscosity solution, combined with the stability of viscosity solutions of (\ref{eq:AppendixFracLaplaceBoundRegularized}) immediately gives via standard techniques that $v^{\ep\rho}\to v^\ep$ as $\rho\to0$ locally uniformly (see any combination of \cite{BaIm-07}, \cite[Sections 4,5]{CaSi-09RegularityIntegroDiff}, \cite{CrIsLi-92}).  Finally, the stability of the inequality (\ref{eq:AppendixFracLaplaceBoundRegularized}) with respect to local uniform limits immediately gives (\ref{eq:AppendixFracLaplaceBoundVEp}), and we conclude the proof of the lemma.
\end{proof}
	
\begin{lem}\label{lem:AppendixVepCovergeToV}
Let $v$ be the unique solution of (\ref{eq:AppendixObstacleViscosityEq}).  There is a subsequence, still denoted as $v^\ep$, such that $v^\ep\to v$ locally uniformly in $B_3$ as well as weakly in $H^\sig$ as $\ep\to0$ and
\[
0\leq -\Laplace{\sig/2}v^\ep(x)\leq C_1,
\]
in the viscosity sense and
where $C_1$ depends only on $\max_{B_3}\{-\Laplace{\sig/2}\psi\}$, $\norm{\psi}_{L^\infty}$, $n$, $\sig$, $\Om$.
\end{lem}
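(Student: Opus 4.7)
I will split the argument into three phases: uniform $L^\infty$ bounds together with the lower estimate for $-\Laplace{\sig/2}v^\ep$, the upper estimate via the truncated operator of Lemma~\ref{lem:AppendixRegPenalizedEstimate}, and the passage to the limit. Standard comparison arguments applied to the penalized problem (\ref{eq:AppendixPenalized}) give a uniform bound $\norm{v^\ep}_{L^\infty(\real^n)}\leq C(\norm{\psi}_{L^\infty})$, since the constants $\pm\norm{\psi}_{L^\infty}$ serve as sub/super-solutions once one uses $\beta(0)\geq \sup_{B_3}F(\psi,\cdot)$ and $\beta_\ep(s)=0$ for $s\geq \ep$. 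Granted this, the lower bound is immediate: by the nondegeneracy assumption (\ref{eq:AppendixFBelowLaplace}),
\[
-\Laplace{\sig/2}v^\ep(x) \;\geq\; F(v^\ep,x) \;=\; \beta_\ep(\psi-v^\ep)(x) \;\geq\; 0
\]
in the viscosity sense, since $\beta\geq 0$ by assumption.

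For the upper bound I fix $s$ with $0<s<\text{dist}(\Om,\partial B_3)$ and use the exact pointwise decomposition
\[
-\Laplace{\sig/2}w(x) \;=\; C_{n,\sig}\,\tilde L(w,x) \;+\; C_{n,\sig}\int_{|y|>s}\del w(x,y)\,|y|^{-n-\sig}\,dy,
\]
with $C_{n,\sig}=\tfrac{\sig(n+\sig-2)}{2}A(n,2-\sig)$. For bounded $w$, the tail integral is pointwise dominated by $C(n)\norm{w}_{L^\infty}s^{-\sig}$ without any regularity assumption on $w$, as it involves no local behavior near $x$. Applied to $\psi$, which by hypothesis satisfies $-\Laplace{\sig/2}\psi\leq C$ classically, this yields $\max_{B_3}\tilde L(\psi,\cdot)\leq \max_{B_3}(-\Laplace{\sig/2}\psi)+C(n)\norm{\psi}_{L^\infty}s^{-\sig}$. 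Applied to $v^\ep$ and combined with the viscosity inequality (\ref{eq:AppendixFracLaplaceBoundVEp}) tested at a point $x_0\in\Om$ against a smooth $\phi$ touching $v^\ep$ from below on some $B_r(x_0)$ with $r<s$ (so that the tail term sees only $v^\ep$), this yields $-\Laplace{\sig/2}v^\ep(x_0)\leq C_1$ in the viscosity sense, for a constant $C_1$ depending only on the claimed quantities.

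With the pointwise viscosity bounds $0\leq -\Laplace{\sig/2}v^\ep\leq C_1$ on $\Om$ together with the global $L^\infty$ bound, Silvestre's interior H\"older estimate \cite{Silv-2006Holder} gives uniform H\"older regularity of $v^\ep$ on any $\Om'\subset\subset\Om$. Arzel\`a-Ascoli and a diagonal exhaustion of $B_3$ extract a subsequence converging locally uniformly to some limit $\tilde v$; the $L^\infty(\Om)$ bound on $(-\Delta)^{\sig/2}v^\ep$ produces a uniform $H^\sig(\Om)$ bound, so after a further extraction $v^\ep\weaklim \tilde v$ weakly in $H^\sig(\Om)$. Standard viscosity-solution stability now identifies $\tilde v$ as a solution of (\ref{eq:AppendixObstacleViscosityEq}): where $\tilde v(x_0)<\psi(x_0)$ the penalty vanishes in a neighborhood for small $\ep$, giving $F(\tilde v,x_0)=0$ in the limit; the pointwise inequality $\tilde v\leq \psi$ is preserved since $\beta_\ep(s)=0$ for $s\geq \ep$; and the zero exterior condition is preserved under uniform limits. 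Uniqueness of the viscosity solution of (\ref{eq:AppendixObstacleViscosityEq}) then forces $\tilde v=v$, so the original subsequence does the job.

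The main obstacle is carrying the decomposition of $-\Laplace{\sig/2}$ into the truncated $\tilde L$ and a far-field tail consistently through the viscosity framework, given that $v^\ep$ is only a viscosity (not classical) solution. The device is to pick the truncation radius $s$ strictly between the test-function deviation radius $r$ and $\text{dist}(\Om,\partial B_3)$, so the tail integrand involves only $v^\ep$ itself (controlled by $\norm{v^\ep}_{L^\infty}$) while the truncated part inherits the viscosity bound of Lemma~\ref{lem:AppendixRegPenalizedEstimate}; this is also where the dependence of $C_1$ on $\text{dist}(\Om,\partial B_3)$, and hence on $\Om$, enters.
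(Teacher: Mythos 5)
Your argument is correct and follows essentially the same route as the paper: lower bound from (\ref{eq:AppendixFBelowLaplace}) and $\beta_\ep\geq 0$, upper bound by splitting $-\Laplace{\sig/2}$ into the truncated operator $\tilde L$ (controlled by Lemma \ref{lem:AppendixRegPenalizedEstimate}) plus a tail controlled by $\norm{\psi}_{L^\infty}$, compactness via Silvestre-type H\"older estimates and the uniform $H^\sig$ bound, and identification of the limit by stability and uniqueness. The only small slip is your justification of $\tilde v\leq\psi$: the correct reason is that $\beta(0)\geq\sup_{B_3}F(\psi,\cdot)$ makes $\psi$ a supersolution of the penalized problem (\ref{eq:AppendixPenalized}), so $v^\ep\leq\psi$ by comparison, not the vanishing of $\beta_\ep$ for $s\geq\ep$.
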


\begin{proof}[Proof of Lemma \ref{lem:AppendixVepCovergeToV}]

We start with the observation that constants are subsolutions of $F=0$, and so $v\geq \inf_{B_3}\{\psi\}$.  Furthermore, by the definition of $\tilde L$ and (\ref{eq:AppendixFracLaplaceBoundVEp}), we have
\begin{align*}
	-\Laplace{\sig/2}v^\ep &= \int_{\abs{y}\leq s}\del v^\ep (x,y)K(y)dy + \int_{\abs{y}>s}\del v^\ep (x,y)K(y)dy\\
	&\leq \max_{B_3}\left\{\tilde L(\psi)\right\} +\tilde C(s,n,\sig)\norm{\psi}_{L^\infty}\\
	&\leq \max_{B_3}\left\{-\Laplace{\sig/2}\psi\right\} +C(s,n,\sig)\norm{\psi}_{L^\infty},
\end{align*}
where $C(s,n,\sig)$ is a constant depending only on $s,n,\sig$. 

This estimate combined with the assumptions (\ref{eq:AppendixFBelowLaplace}) and $\beta_\ep\geq0$ put into the equation (\ref{eq:AppendixObstacleViscosityEq}) gives that $v^\ep$ solves in the viscosity sense, and hence in view of the linearity also in the sense of distributions:
\begin{equation}\label{eq:AppendixVepLaplaceBounded}
	0\leq -\Laplace{\sig/2}v^\ep(x)\leq \max_{B_3}\left\{L(\psi,\cdot)\right\} +C(s,n,\sig)\norm{\psi}_{L^\infty} \ \ \text{in}\ \Om.
\end{equation}
Hence $v^\ep$ are uniformly bounded in $H^\sig$.  Furthermore, as in the proof of \cite[Proposition 2.9]{Silv-07} (or alternatively using the Green's Function of $\Laplace{\sig/2}$ for $B_3$, see \cite{Land-72}), the boundedness of $-\Laplace{\sig/2}v^\ep$ also gives a uniform $C^\al$ estimate for $v^\ep$.  Hence we can extract two subsequences $v^\ep\to w$, both locally uniformly and in $H^\sig$ weakly for some $w$.

What remains to show is to confirm that $w=v$.  We will show that $w$ necessarily solves (\ref{eq:AppendixObstacleViscosityEq}), and hence be uniqueness, $w=v$.  We first note that since $\beta_\ep\geq0$, $v^\ep$ and hence by stability of viscosity solutions, also $w$, is a subsolution of $F(w,x)\geq0$.  Furthermore, $\beta$ is chosen specifically so that $F(\psi,x)\leq \beta_\ep(0)$ and hence $\psi$ is a supersolution of (\ref{eq:AppendixPenalized}).  Thus by comparison with $v^\ep$, $v^\ep\leq\psi$ and hence $w\leq\psi$ as well.  Therefore, $w$ is a subsolution of (\ref{eq:AppendixObstacleViscosityEq}).  Now to justify the supersolution property suppose that $w-\phi$ has a strict global max at $x_0\in B_3$.  Then there exist $x_\ep\in B_3$ with $x_\ep\to x_0$ and $v^\ep-\phi$ has a local max at $x_\ep$.  If it happens that $w(x_0)=\psi(x_0)$, then (\ref{eq:AppendixObstacleViscosityEq}) is satisfied, and so we assume that $w(x_0)<\psi(x_0)$.  In this case for $\ep$ small enough, $v^\ep(x_\ep)<\psi(x_\ep)$, and hence for $\ep$ small enough, $\beta_\ep(\psi(x_\ep)-v^\ep(x_\ep))=0$.  Thus $F(\phi,x_\ep)\leq0$ by the supersolution property of $v^\ep$.  Passing to the limit using the continuity of $F$ and the smoothness of $\phi$ gives $F(\phi,x_0)\leq0$.  Hence $w$ solves (\ref{eq:AppendixObstacleViscosityEq}).  This concludes the proof of the lemma. 
\end{proof}

To conclude this section, we give the brief proof of Proposition \ref{prop:BrezisKinderNonlocal}.

\begin{proof}[Proof of Proposition \ref{prop:BrezisKinderNonlocal}]
The inequality (\ref{eq:AppendixVepLaplaceBounded}) in the sense of viscosity solutions and in sense of distributions is stable with regards to respectively local uniform limits and $H^\sig$ weak limits.  Therefore by Lemma \ref{lem:AppendixVepCovergeToV}, $v$ is in $H^\sig$ and hence for a.e.$x$, 
\[
\int_{\real^n}\frac{\abs{\del\Gamma(x,y)}}{\abs{y}^{n+\sig}}dy <\infty,
\]
and $v$ satisfies the inequality almost everywhere, and this gives the proposition.

\end{proof}



\bibliography{../refs}
\bibliographystyle{plain}
\end{document}